\providecommand{\keywords}[1]{\noindent {\textbf{Keywords:}} #1}
\newtheorem{theorem}{Theorem}
\newtheorem{definition}[theorem]{Definition}
\newtheorem{lemma}[theorem]{Lemma}
\newtheorem{proposition}[theorem]{Proposition}
\newtheorem{remark}[theorem]{Remark}
\numberwithin{equation}{section} 
\numberwithin{theorem}{section}
\providecommand{\AMScodes}[1]{\noindent {\textbf{MSC2010:}} #1}
\def\dx{\,\textnormal{d}x}
\def\dt{\textnormal{d}t}
\def\d{\,\textnormal{d}}
\def\cbd{\Gamma}
\def\supp{\textnormal{supp}\,}
\def\csbd{\rho_{\Gamma}}
\newcommand\csin[1]{\chi_{#1}}
\def\dx{\,\textnormal{d}x}
\def\dt{\textnormal{d}t}
\def\d{\,\textnormal{d}}
\begin{document}

\title{\bf Some remarks on the Robust Stackelberg controllability for the heat equation with controls on the boundary}

\author{ V\'ictor Hern\'andez-Santamar\'ia \thanks{Institut de Math\'{e}matiques de
    Toulouse, UMR 5219,
    Universit\'e de Toulouse, CNRS, UPS IMT, F-31062 Toulouse Cedex 9,
    France. E-mail: \texttt{victor.santamaria@math.univ-toulouse.fr}} \and Liliana Peralta \thanks{Centro de Investigaci\'on en Matem\'aticas, UAEH, Carretera Pachuca-Tulancingo km 4.5 Pachuca, Hidalgo 42184, Mexico  E-mail: \texttt{liliana\_peralta@uaeh.edu.mx}}}

\maketitle

\abstract{
In this paper, we present some controllability results for the heat equation in the framework of hierarchic control. We present a Stackelberg strategy combining the concept of controllability with robustness: the main control (the leader) is in charge of a null-controllability objective while a secondary control (the follower) solves a robust control problem, this is, we look for an optimal control in the presence of the worst disturbance. We improve previous results by considering that either the leader or follower control acts on a small part of the boundary.  We also present a discussion about the possibility and limitations of placing all the involved controls on the boundary. 


\keywords{Hierarchic control, robust control, Carleman estimates, boundary controllability.}

\AMScodes{49J20; 93B05; 49K35}

\section{Introduction}\label{sec_intro}
%
%
Optimization and control problems arise in many applications of engineering and mathematics. Traditionally, such problems deal with a single objective: minimize cost, maximize benefit, etc., (see, e.g. \cite{geering,trelat} and the references therein). However, when studying more realistic and complex situations, it is desirable to include several different objectives and therefore the introduction of multi-objective optimization is essential. 


In the framework of control of PDEs, the so-called hierarchic control was introduced in \cite{LionsHier,LionsSta} by J.-L. Lions to study a bi-objective control problem for the wave and heat equation, respectively. In these works, the hierarchic control method is proposed as a tool to combine the concepts of optimal control and controllability. Such methodology employs the notion of Stackelberg optimization (\cite{Stackelber}) to deal with a multi-objective decision problem where one of the participants, the \emph{leader}, is in charge of a controllability objective and the other participant, the \emph{follower}, deals with an optimal control one. 

In the recent past, several authors have applied successfully the hierarchic control method for a wide variety of equations and solving different kind of objectives, see, among others, \cite{a_araujo,araruna,araruna1,carreno,Guillen,vhs_deT_rob,jesus,montoya}. In particular, in \cite{vhs_deT_rob}, the authors proposed to combine the notion of hierarchic control for the heat equation introduced in \cite{LionsSta} with the concept of robust control (see e.g. \cite{aziz,temam,temam_nonlinear}), this means,  the leader is in charge of a controllability problem while the follower aims to achieve its minimum in the presence of the worst disturbance allowed. By doing this, it is possible to design controls that work even in the presence of certain perturbations and allows to achieve system robustness. 

However, all of the previous works have one thing in common: they deal with hierarchic strategies with controls that are localized in the interior of the domain, namely, for distributed controls. As far as we know, there is only one paper dealing with the boundary case: in \cite{da_silva}, the authors study a Stackelberg-Nash strategy for  (semilinear) parabolic equations with the possibility of the leader or the followers being placed on the boundary. 

Here, employing some arguments in \cite{da_silva}, we extend and discuss the results concerning the robust hierarchic strategy for the heat equation introduced in \cite{vhs_deT_rob} using boundary controls instead of distributed ones. This change, together with the intricacy of the hierachic methodology introduce additional difficulties and new control strategies are required. 

\subsection{The problem and its formulation}

In this paper, we are interested in a robust control strategy for the heat equation where we assume that we can act on the dynamics of the system through a hierarchy of controls. We will study the case where some of the controls act through a (small) portion of the boundary. To fix ideas, we begin by explaining one of the control problems addressed in this paper. 

Let us consider a bounded open set $\Omega\subset \mathbb{R}^N$, $N\geq 1$ with boundary $\partial \Omega$ of class $\mathcal C^2$. Let $\omega\subset \Omega$ be a nonempty open subset and {$\cbd$ be a nonempty open subset of $\partial \Omega$}.  Given $T>0$, we will use the notation $Q:=\Omega\times(0,T)$ and $\Sigma:=\partial \Omega\times(0,T)$, while $n(x)$ will denote the outward unit normal vector at the point $x\in \partial \Omega$. 

Let us consider the system
\begin{equation}\label{heat_lin}
\begin{cases}
y_t-\Delta y=\csin{\omega}h+\psi, & \text{in Q}, \\
y=v\csbd &\text{on } \Sigma, \\
y(x,0)=y^0(x), & \text{in } \Omega.
\end{cases}
\end{equation}
where $y^0$ is a given initial datum and $\psi$ is an unknown perturbation.

In \eqref{heat_lin}, $y=y(x,t)$ is the state while $h=h(x,t)$ and $v=v(x,t)$ are control functions applied on $\omega$ and $\Gamma$, respectively. Here $\csin{\omega}$ is the characteristic function of the set $\omega$ and $\csbd$ is a smooth nonnegative function such that $\supp\csbd=\overline \cbd$. For the moment, we assume that the system is well defined in terms of the controls $h$ and $v$, the perturbation $\psi$, and the initial condition $y_0$. This will be clarified below. 

The intuitive idea of the robust hierarchic control is to choose ``simultaneously'' the control functions  $v$ and $h$ in such way that the following optimality problems are solved: 
\begin{enumerate}
\item find the ``best'' control $v$ such that the solution to $y$ is ``not too far'' from a desired target $y_d$ even in the presence of the ``worst'' disturbance $\psi$, and
\item find the minimal norm control $h$ such that $y(\cdot,T)=0$. 
\end{enumerate}

Seen independently, Problem 1 (i.e. $h\equiv 0$) is a classical robust control problem (cf. \cite{temam,temam_nonlinear,aziz}) which looks for a control such that a given cost functional achieves its minimum in presence of the worst disturbance possible. Problem 2 (i.e. $v\equiv\psi\equiv 0$) is a classical null controllability problem and it has been studied for a broad class of systems described by parabolic PDEs, see for instance \cite{cara_guerrero} and the references within.

Consider a nonempty open set $\mathcal O_d\subset \Omega$ and define the cost functional
\begin{equation}\label{func_rob}
J_r(v,\psi;h)=\frac{1}{2}\iint_{\mathcal O_d\times(0,T)}|y-y_d|^2\dx\dt+\frac{1}{2}\left[\ell^2\iint_{\Sigma}|v|^2\d\sigma\dt-\gamma^2\iint_Q|\psi|^2\dx\dt\right]
\end{equation}
%
%
where $\ell,\gamma>0$ are constants and $y_d\in L^2(\mathcal O_d\times(0,T))$ is given. This functional is used to formulate  Problem 1, more precisely, we will look for a saddle point $(\bar v,\bar \psi)$ which simultaneously maximize $J_r$ with respect to $\psi$ and minimize it with respect to $v$. The parameters $\ell$ and $\gamma$ play a key role and take into account the relative weight of each term: the term $-\gamma^2\|\psi\|^2_{L^2(Q)}$ constrains the magnitude of the disturbance allowed in the optimization process while the term associated to $\ell^2\|v\|^2_{L^2(\Sigma)}$ moderates the effort made by the control. 

Now, we are in position to describe the Robust hierarchic control problem. According to the formulation introduced by Lions \cite{LionsSta}, we denote $h$ as the leader control and $v$ as the follower control. Then, the proposed methodology consists of two parts:

\begin{enumerate}
\item[(i)] For a fixed leader $h\in L^2(\omega\times(0,T))$, we look for an optimal pair $(\bar v,\bar \psi)$ solving the robust control problem:
\begin{definition}\label{defi_rob}
Let $h\in L^2(\omega\times(0,T))$ be fixed. The control $\bar v\in L^2(\Sigma)$, the disturbance $\bar \psi\in L^2(Q)$ and the state $\bar y=\bar y(h,\bar v,\bar \psi)$ solution to \eqref{heat_lin} associated to $(\bar v,\bar \psi)$ are said to solve the robust control problem when a saddle point $(\bar v,\bar \psi)$ (which depends on $h$) of the cost functional $J_r$ is reached, namely
\begin{equation}\label{saddle}
J_r(\bar v,\psi;h)\leq J_r(\bar v,\bar \psi;h)\leq J_r(v,\bar \psi;h), \quad \forall (v,\psi)\in L^2(\Sigma)\times L^2(Q).
\end{equation}
In this case,
\begin{equation}\label{saddle_minmax}
J_r(\bar v,\bar \psi;h)=\max_{\psi\in L^2(Q)}\min_{v\in L^2(\Sigma)}J_r(v,\psi;h)=\min_{v\in L^2(\Sigma)}\max_{\psi\in L^2(Q)}J_r(v,\psi;h).
\end{equation}
\end{definition} 
Under certain conditions, we will prove that there exists a unique pair $(\bar v,\bar \psi)$ and the associated state $\bar y=\bar y(h,\bar v,\bar\psi)$ such that \eqref{saddle} holds.
\item[(ii)] After identifying the saddle point for each $h$, we look for the control of minimal norm $\bar h$ satisfying null controllability constraints, i.e., we look for an optimal control $\bar h$  such that
\begin{equation}\label{opt_leader}
J(\bar h)=\min_{h\in L^2(\omega\times(0,T))}\frac{1}{2}\iint_{\omega\times(0,T)}|h|^2\dx\dt, \quad \text{subject to } y(\cdot,T;\bar v,\bar \psi)=0.
\end{equation}
\end{enumerate}

\begin{remark}
As in \cite{LionsSta} and other related papers, we address the multi-objective optimization problem by solving the mono-objective problems \eqref{saddle_minmax} and \eqref{opt_leader}. Note, however, that in the second minimization problem the solution of the robust control $(\bar v, \bar \psi)$ is fixed and therefore its characterization needs to be considered.
\end{remark}

\subsection{Main results}

In a first result, we address the robust hierarchic control of system \eqref{heat_lin}, that is, the case where the follower control is applied on the boundary and the leader control is localized on the interior of the domain. In this regard, we have the following result.
\begin{theorem}\label{teo_main1} 
Assume that $\omega\cap\mathcal O_d\neq \emptyset$. Then, there exist $\gamma_0$, $\ell_0$ and a positive function $\varrho_1=\varrho_1(t)$ blowing up at $t=T$ such that for any $\gamma>\gamma_0$, $\ell>\ell_0$, $y^0\in H^{-1}(\Omega)$ and $y_d\in L^2(\mathcal O_d\times(0,T))$ verifying 
\begin{equation}\label{integ_yd}
\iint_{\mathcal O_d\times(0,T)}\varrho_1^2|y_d|^2dxdt<+\infty,
\end{equation}
we can find a leader control $h\in L^2(\omega\times(0,T))$ and a unique solution $(\bar v,\bar\psi)\in L^2(\Sigma)\times L^2(Q)$ to the robust control problem for the cost functional \eqref{func_rob}, such that the associated solution of \eqref{heat_lin} verifies $y(\cdot,T)=0$ in $\Omega$. 
\end{theorem}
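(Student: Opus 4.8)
The plan is to follow the classical two-step Stackelberg scheme adapted to the robust (saddle-point) setting of \cite{vhs_deT_rob}, now with the follower acting on the boundary. The first step is to solve, for each fixed leader $h$, the robust control problem of Definition \ref{defi_rob}. Since $J_r$ is quadratic, strictly convex in $v$ (thanks to the $\ell^2\|v\|^2$ term) and strictly concave in $\psi$ (thanks to $-\gamma^2\|\psi\|^2$), a saddle point exists and is unique provided $\ell,\gamma$ are large enough so that the perturbative coupling through the observation term $\|y-y_d\|^2_{L^2(\mathcal O_d\times(0,T))}$ does not destroy the coercivity/anticoercivity; this is where the thresholds $\gamma_0,\ell_0$ come from. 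Writing the Euler--Lagrange equations for the saddle point, one introduces an adjoint state $p$ solving a backward heat equation with source $\csin{\mathcal O_d}(\bar y-y_d)$, and one obtains the characterizations $\bar v = -\tfrac{1}{\ell^2}\,\csbd\,\partial_n p$ on $\Sigma$ and $\bar\psi = \tfrac{1}{\gamma^2}\,p$ in $Q$. Substituting back gives a coupled forward--backward system in $(\bar y,p)$, linear in $h$, whose well-posedness in suitable weighted spaces must be established — this is essentially the content imported from \cite{vhs_deT_rob}, with the boundary trace $\partial_n p|_\Sigma$ requiring the $\mathcal C^2$ regularity of $\partial\Omega$ and parabolic smoothing to make sense of $\bar v\in L^2(\Sigma)$.

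The second step is the null-controllability of the optimality system by the leader $h$. By linearity and duality (Fenchel--Rockafellar / the penalized HUM approach), finding $\bar h$ with $\bar y(\cdot,T)=0$ reduces to proving an observability inequality for the adjoint of the coupled $(\bar y,p)$-system. One sets up the adjoint variables $(\varphi,z)$: $\varphi$ runs forward, $z$ runs backward, they are coupled through the zones $\mathcal O_d$ and $\Gamma$ via the terms produced when dualizing the expressions for $\bar v$ and $\bar\psi$. The goal is an estimate of the form
\begin{equation*}
\|\varphi(\cdot,0)\|^2_{H^1_0(\Omega)} \;\leq\; C\iint_{\omega\times(0,T)}|\varphi|^2\dx\dt,
\end{equation*}
(plus control of the data terms involving $y_d$, which is why the weighted hypothesis \eqref{integ_yd} on $y_d$ appears). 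The engine for this is a global Carleman estimate for the heat operator with observation in $\omega$; the coupling term supported in $\mathcal O_d$ is absorbed using the hypothesis $\omega\cap\mathcal O_d\neq\emptyset$, which lets one place the Carleman weight's observation region inside $\omega\cap\mathcal O_d$ and thereby dominate the $\mathcal O_d$-term by the $\omega$-term after suitable choices of parameters and energy estimates. One then runs the standard argument: minimize a penalized functional $J_\varepsilon$, extract uniform bounds on the approximate controls $h_\varepsilon$ from the observability inequality, and pass to the limit $\varepsilon\to 0$ to get $\bar h\in L^2(\omega\times(0,T))$ driving $\bar y(\cdot,T)=0$; the weights $\varrho_1$ quantify the exponential decay of $\bar y$ near $t=T$ and feed the requirement \eqref{integ_yd}.

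The main obstacle I expect is the Carleman estimate for the coupled optimality system when the follower's cost term lives on the \emph{boundary}: dualizing $\bar v=-\ell^{-2}\csbd\,\partial_n p$ introduces a boundary term $\sim \csbd\,\partial_n\varphi$ in the adjoint equation for $z$, so one must control a normal-derivative trace on $\Sigma$ by a global Carleman weight — this is precisely the kind of boundary-vs-interior bookkeeping handled in \cite{da_silva}, and it forces a careful choice of the Carleman weight function (flat or with prescribed sign of $\partial_n$ on $\Gamma$) together with the extra parameters $\ell,\gamma$ large to make the crossed boundary terms subcritical. A secondary technical point is the functional-analytic setting: because $\bar\psi=\gamma^{-2}p$ and $p$ is only as regular as a heat adjoint with an $L^2$ source, one must be careful about in which space the saddle point lives and whether the substituted coupled system is well-posed; one circumvents this, as in \cite{vhs_deT_rob}, by working with weighted $L^2$ spaces adapted to the Carleman weights and by establishing the estimates at the level of smooth approximations first. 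Once these ingredients are in place, combining Step 1 (existence/uniqueness/characterization of $(\bar v,\bar\psi)$) and Step 2 (existence of $\bar h$ with the null-controllability constraint) yields the conclusion of Theorem \ref{teo_main1}.
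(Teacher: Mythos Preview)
Your two–step scheme is exactly the one the paper follows, and your identification of the main difficulty (the normal–derivative trace $\csbd\,\partial_n p$ produced by the boundary follower) is correct. A few points, however, deserve correction or sharpening.

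First, two small slips. In the optimality conditions the sign is $\bar v = +\tfrac{1}{\ell^2}\csbd\,\partial_n q$ (not $-$): after integrating by parts the adjoint $q$ against $y_v$, the boundary term comes with a sign that flips once more when inserted into the Euler--Lagrange equation. Also, in the adjoint of the optimality system the roles are reversed from what you wrote: $\varphi$ carries the terminal datum $\varphi^T$ and runs \emph{backward}, while the second variable $\theta$ (your $z$) runs forward from zero; the non-homogeneous boundary condition $\theta|_\Sigma=\tfrac{1}{\ell^2}\csbd\,\partial_n\varphi$ sits on the \emph{forward} component.

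The more substantive point is the Carleman machinery. You propose to absorb the boundary coupling by a clever choice of weight ``as in \cite{da_silva}''. That construction (two weights $\eta_1,\eta_2$ with ordering properties) is what the paper uses for the \emph{other} configuration (boundary leader, Section~\ref{sec_bound_leader}); it is tailored to a system where both adjoint equations have homogeneous Dirichlet data and one needs to kill a second boundary \emph{observation}. Here the structure is different: $\theta$ has a genuinely non-homogeneous Dirichlet condition, and the paper handles it by invoking the Carleman estimate of Imanuvilov--Puel--Yamamoto \cite{ima_yama_boundary} for heat equations with non-homogeneous boundary data (inequality \eqref{car_boundary}), applied to $\theta$, combined with the classical Carleman applied to $\varphi$. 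The price is that the right-hand side now carries boundary norms $\|e^{-s\alpha}\xi^{-1/4}\partial_n\varphi\|_{H^{1/2,1/4}(\Sigma)}$ and $\|e^{-s\alpha}\xi^{-1/4+1/m}\partial_n\varphi\|_{L^2(\Sigma)}$; these are then absorbed not by weight geometry but by trace/regularity: one shows $\varphi\in H^{2,1}(Q)$ (Proposition~\ref{prof_regularity}) so that $\partial_n\varphi\in H^{1/2,1/4}(\Sigma)$, and one estimates the weighted $H^{2,1}$-norm of $\varphi$ via an auxiliary system for $\sigma\varphi$ with $\sigma=e^{-s\alpha^*}(s\xi^*)^a$. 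Only after these boundary terms are gone does one use $\omega\cap\mathcal O_d\neq\emptyset$ in the way you describe, to eliminate the local term in $\theta$ by the usual cut-off/integration-by-parts trick against the equation for $\varphi$.

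In short: your outline is right and would compile into the paper's proof once you replace the \cite{da_silva} weight trick by the non-homogeneous-boundary Carleman of \cite{ima_yama_boundary} together with the $H^{2,1}\to H^{1/2,1/4}(\Sigma)$ trace estimate; the observability inequality you need is \eqref{obs_ineq_1}, which also contains the weighted global term $\iint_Q\rho^{-2}|\theta|^2$ that pairs with $y_d$ and produces the hypothesis \eqref{integ_yd}.
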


As usual in robust control problems, the assumption on $\gamma$ means that the possible disturbances {spoiling the control objectives should have moderate $L^2$-norms.} Indeed, without this condition, it is not possible to prove the existence of the saddle point. On the other hand, condition \eqref{integ_yd} on the target $y_d$ means that it should approach 0 as $t\to T$. This is a standard feature in some null controllability problems and has been discussed, for instance, in \cite{araruna,deteresa2000}. 

We shall prove Theorem \ref{teo_main1} in two steps. In the first one, we will adapt the methodology in \cite{vhs_deT_rob} to the boundary case  to prove the existence and uniqueness of a saddle point to \eqref{func_rob}. Then, the solution can be characterized by means of an optimality system leading to a coupled system. In the second part, we will use Carleman estimates for parabolic equations with non-homogenous boundary terms to deduce an observability inequality for a suitable adjoint system, which will imply the desired null controllability objective. 

In this paper, we are also interested in studying different configurations for the positioning of the boundary controls. A first question that arises naturally is the possibility to exchange the position of the leader $h$ and the follower control $v$. Adapting some of the arguments in \cite{da_silva}, we will see that this is in fact possible by considering systems of the form 
\begin{equation}\label{heat_dif}
\begin{cases}
z_t-\Delta z=\csin{\mathcal B_1}v+\csin{\mathcal B_2}\psi, & \text{in Q}, \\
z=h\csin{\Gamma} &\text{on } \Sigma, \\
z(x,0)=z^0(x), & \text{in } \Omega,
\end{cases}
\end{equation}
where {$\mathcal B_i \subsetneq \Omega$}, $i=1,2,$ are nonempty open subsets and ${z^0(x)\in H^{-1}(\Omega)}$ is a given initial datum. 

The same methodology presented above can be used to address the robust hierarchic control of \eqref{heat_dif}. In this case, the cost functional \eqref{func_rob} should be replaced by 
\begin{equation}\label{func_rob_dif}
K_r(v,\psi;h)=\frac{1}{2}\iint_{\mathcal O_d\times(0,T)}|y-y_d|^2\dx\dt+\frac{1}{2}\left[\ell^2\iint_{\mathcal B_1\times(0,T)}|v|^2\dx\dt-\gamma^2\iint_{\mathcal B_2\times(0,T)}|\psi|^2\dx\dt\right].
\end{equation}
As before, we will see that the robust control can be solved by selecting appropriate parameters $\ell$ and $\gamma$. Notice that unlike \eqref{heat_dif}, here we maximize for disturbances $\psi$ localized in the region $\mathcal B_2$. This comes from a technical reason concerning the resolution of the null controllability objective (see Section {\ref{sec_bound_leader}} for details), but which is not necessary to solve the robust control part. 

Once a characterization for the saddle point of $K_r$ is known, the resulting optimality system is once again a coupled system of PDEs. It is well-known that controllability problems using boundary controls is a difficult task for systems of two or more equations (see, e.g, \cite{assia_survey,assia_luz_new}). Here,  using that the parameters $\ell,\gamma$ coming from the solution of the robust control part are sufficiently large, we will combine Carleman estimates with boundary observations and weighted energy estimates to obtain an observability inequality for a system of two equations with only one observation at the boundary.

The result can be summarized as follows. 
\begin{theorem}\label{teo_main2}
Assume that 
\begin{equation}\label{loc_teo2}
\overline{\mathcal O_d}\cap \overline{\mathcal B_i} =\emptyset \quad\text{and}\quad \overline{\Gamma}\subset \partial \mathcal B_i, \quad i=1,2.
\end{equation}
Then, there exist $\gamma_0$, $\ell_0$ and a positive function $\varrho_2=\varrho_2(t)$ blowing up at $t=T$ such that for any $\gamma>\gamma_0$, $\ell>\ell_0$, $y^0\in H^{-1}(\Omega)$ and $y_d\in L^2(\mathcal O_d\times(0,T))$ verifying 
\begin{equation*}
\iint_{\mathcal O_d\times(0,T)}\varrho_2^2|y_d|^2dxdt<+\infty,
\end{equation*}
we can find a leader control $h\in L^2(\Gamma\times(0,T))$ and a unique solution $(\bar v,\bar\psi)\in L^2(\mathcal B_1\times(0,T))\times L^2(\mathcal B_2\times(0,T))$ to the robust control problem for \eqref{func_rob_dif}, such that the associated solution to \eqref{heat_dif} verifies $z(\cdot,T)=0$ in $\Omega$. 
\end{theorem}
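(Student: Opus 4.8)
The plan is to mirror the two-step strategy used for Theorem \ref{teo_main1}, but with the roles of the boundary and interior controls exchanged, and to pay special attention to the fact that the leader now acts on $\Gamma\subset\partial\Omega$ while the null-controllability objective must be read off from boundary observations only.

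\emph{Step 1: resolution of the robust control part.} For a fixed leader $h\in L^2(\Gamma\times(0,T))$, I would first establish existence and uniqueness of a saddle point $(\bar v,\bar\psi)\in L^2(\mathcal B_1\times(0,T))\times L^2(\mathcal B_2\times(0,T))$ for $K_r$. The functional $v\mapsto K_r(v,\psi;h)$ is strictly convex (because of the term $\ell^2\|v\|^2$) and $\psi\mapsto K_r(v,\psi;h)$ is strictly concave (because of $-\gamma^2\|\psi\|^2$), so the standard Ky Fan--von Neumann minimax argument applies \emph{provided} one controls the cross term $\|y-y_d\|^2_{L^2(\mathcal O_d\times(0,T))}$; this is exactly where $\gamma>\gamma_0$ enters, forcing the map $\psi\mapsto K_r(v,\psi;h)$ to be coercive (i.e. $\to-\infty$) uniformly in $v$. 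Since $\mathcal B_2\subsetneq\Omega$, the solution operator $\psi\mapsto y$ is bounded $L^2(\mathcal B_2\times(0,T))\to L^2(Q)$, so the required smallness threshold $\gamma_0$ depends only on $\|\Omega\|$, $T$ and the embedding constants, exactly as in \cite{vhs_deT_rob}. Once the saddle point exists, I would characterize it by Euler--Lagrange equations: introduce the adjoint state $q$ solving the backward heat equation with source $\csin{\mathcal O_d}(y-y_d)$ and homogeneous Dirichlet data, and obtain $\bar v=-\frac{1}{\ell^2}\csin{\mathcal B_1}q$ and $\bar\psi=\frac{1}{\gamma^2}\csin{\mathcal B_2}q$. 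Substituting back produces the optimality system: a forward--backward coupled pair $(z,q)$ on $Q$ with homogeneous boundary data on $\Sigma$ except for the leader term $h\csin{\Gamma}$ on $z$, coupling coefficients $\pm\ell^{-2},\pm\gamma^{-2}$, and the terminal/initial data $z(0)=z^0$, $q(T)=0$.

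\emph{Step 2: null controllability of the optimality system.} By duality (penalized Hilbert Uniqueness Method), the null-controllability statement $z(\cdot,T)=0$ is equivalent to an observability inequality for the adjoint of the $(z,q)$ optimality system. That adjoint is again a coupled system of two parabolic equations $(\varphi,\eta)$ with the same coupling constants, and the leader being on $\Gamma$ means the observation available is the normal derivative $\partial_n\varphi$ on $\Gamma\times(0,T)$. The target here is an estimate of the form
\begin{equation*}
\|\varphi(0)\|^2_{H^1_0(\Omega)}+\|\eta(0)\|^2\leq C\iint_{\Gamma\times(0,T)}|\partial_n\varphi|^2\,\d\sigma\dt.
\end{equation*}
The route is: (a) apply a global Carleman inequality with boundary observation (of the type used for Theorem \ref{teo_main1}, relying on a weight whose singularity as $t\to T$ dictates $\varrho_2$) to the $\varphi$-equation, absorbing the coupling source $\gamma^{-2}\csin{\mathcal B_2}\eta$ into the left-hand side for $\gamma$ large; (b) obtain a Carleman estimate for the $\eta$-equation whose only source is $\ell^{-2}\csin{\mathcal B_1}\varphi$, giving a local integral of $\eta$ in terms of a local integral of $\varphi$ plus lower-order terms, again absorbable when $\ell$ is large; (c) combine the two, use the hypothesis $\overline{\mathcal O_d}\cap\overline{\mathcal B_i}=\emptyset$ to keep the weights on the different regions compatible and avoid interaction between the observation region and the target region, and finally pass from the Carleman-weighted estimate to the clean energy estimate above via the usual dissipation/cutoff-in-time argument.

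\emph{Main obstacle.} The genuinely delicate point is step 2(b)--(c): getting an observability inequality for a $2\times2$ parabolic system with a \emph{single boundary} observation. Interior observations for cascade systems are classical, but here the coupling $\csin{\mathcal B_1}\varphi$ sits in the interior while the only measurement is $\partial_n\varphi|_\Gamma$; one must first recover an interior bound on $\varphi$ on a neighborhood of $\Gamma$ from the boundary Carleman term, then propagate it to $\mathcal B_1$ — and this is exactly why the geometric condition $\overline\Gamma\subset\partial\mathcal B_i$ is imposed, so that $\mathcal B_i$ touches the observed part of the boundary and a Carleman weight can be built that is "large" on $\mathcal B_i$ near $\Gamma$. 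Handling the boundary terms generated by integration by parts in the Carleman computation for $\eta$ (which has no direct observation) is the technical crux: they must be shown to be dominated by the $\varphi$-observation, which is where the largeness of $\ell,\gamma$ coming from Step 1 is used in an essential way. Once this combined observability inequality is in hand, the null controllability of \eqref{heat_dif} and hence the full statement of Theorem \ref{teo_main2} follows by the standard HUM duality argument, with $\varrho_2$ read off from the Carleman weight and condition \eqref{integ_yd}'s analogue guaranteeing the source $y_d$ is admissible.
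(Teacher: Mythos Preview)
Your two-step outline and your identification of the crux --- killing the unwanted boundary observation of the second adjoint variable by exploiting the largeness of $\ell,\gamma$ --- match the paper. But the details are off and the key technical device is missing. In the adjoint system the $\varphi$-equation has source $\theta\chi_{\mathcal O_d}$ (no small parameter) while the $\theta$-equation has $-\ell^{-2}\varphi\chi_{\mathcal B_1}+\gamma^{-2}\varphi\chi_{\mathcal B_2}$, so your step~(a) has the coupling backwards; absorption of $\theta\chi_{\mathcal O_d}$ must come from the Carleman parameter $s$, not from $\gamma$. Also, the observability inequality must contain a weighted global term $\iint_Q\varrho_2^{-2}|\theta|^2$ (this is what links to the $y_d$-admissibility via the penalized HUM functional) rather than $\|\theta(0)\|^2$, which is zero since $\theta(0)=0$.

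The substantive gap is the mechanism for eliminating the boundary term $\iint_{\mathcal S^\prime}|\partial_n\theta|^2$ produced by the Carleman estimate for $\theta$. Your description (``recover an interior bound on $\varphi$ near $\Gamma$ and propagate it to $\mathcal B_1$'') does not lead to a workable argument. What the paper does, following \cite{da_silva}, is to use \emph{two different} Carleman weights $\tilde\alpha_1,\tilde\alpha_2$ built from functions $\eta_1\geq\eta_2$ satisfying $\eta_1=\eta_2$ on $\mathcal O_d$ and, crucially, $\eta_1\geq\max_{\overline{\mathcal B}_i}\eta_2$ on $\overline{\mathcal B}_i$ --- the latter is precisely what hypothesis \eqref{loc_teo2} makes possible. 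One applies the weight $\tilde\alpha_1$ to $\varphi$ and $\tilde\alpha_2$ to $\theta$; equality on $\mathcal O_d$ lets the $\theta\chi_{\mathcal O_d}$ source be absorbed by taking $s$ large. The $\partial_n\theta$ term is then handled not by a further Carleman but by a trace inequality together with a \emph{weighted energy estimate} for $\theta$: since $\theta$ solves a heat equation with zero initial data and source supported on $\mathcal B_1\cup\mathcal B_2$ carrying the small coefficients, one obtains $\int_0^T e^{-2s\widehat\beta_2}\|\theta(t)\|^2_{H^2}\dt\lesssim \ell^{-4}\iint_{\mathcal B_1}e^{-2s\widehat\beta_2}|\varphi|^2+\gamma^{-4}\iint_{\mathcal B_2}e^{-2s\widehat\beta_2}|\varphi|^2$. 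The two-weight inequality on $\mathcal B_i$ then converts $e^{-2s\widehat\beta_2}$ into $e^{-2s\tilde\beta_1}$ there, and the right-hand side is absorbed into the $\varphi$-Carleman for $\ell,\gamma$ large. Without this two-weight construction the weights on $\mathcal B_i$ are incompatible and the absorption step fails; this is the idea your proposal is missing.
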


Hypothesis \eqref{loc_teo2} plays a fundamental role in the selection of the weight functions participating in the Carleman estimates needed to prove Theorem \ref{teo_main2}. Indeed, it is not a common feature and allows to build two different Carleman weights which, together with the special structure of the adjoint system (see eq. \ref{adjunto1}), help to eliminate one of the boundary observation terms. This particular selection has been recently used in other hierarchic control problems, see \cite{da_silva}. 

A second question that arises in this context is if it is possible to put both controls on the boundary of the system, namely
\begin{equation}\label{sys_teo3}
\begin{cases}
w_t-\Delta w=\psi, & \text{in Q}, \\
w= h\csin{\Gamma_1}+ v{\csbd}_{2} &\text{on } \Sigma, \\
w(x,0)=w^0(x), & \text{in } \Omega.
\end{cases}
\end{equation}
where $\Gamma_{i}\subset\partial \Omega$, $i=1,2,$ are open sets with $\cbd_1\cap\cbd_2\equiv \emptyset$. We provide a partial answer to this problem for the case when $\psi\equiv 0$ and the cost functional associated to the optimization problem is defined by
\begin{equation}\label{func_teo3}
I(v;h)=\frac{1}{2}\iint_{\mathcal O_d\times(0,T)}|w-w_d|^2\dx\dt+\frac{\ell^2}{2}\iint_{\partial\Omega\times(0,T)}\rho_\star^2|v|^2\d\sigma\dt,
\end{equation}
where $\rho_\star=\rho_\star(t)$ is a suitable positive weight function blowing up exponentially at $t=0$ and $t=T$. Observe that this functional is related to a more classical optimal control problem (cf. \cite{Lions_optim,trol}) and hence we prove a result in the original framework of hierarchic control introduced in \cite{LionsSta}. We have the following theorem.
\begin{theorem}\label{teo3}
Suppose that $\psi\equiv 0$. Then, there exist $\ell_0$ and a positive function $\varrho_3=\varrho_3(t)$ blowing up at $t=T$ such that for any $\ell>\ell_0$, $w_0\in H^{-1}(\Omega)$ and $y_d\in L^2(\mathcal O_d\times(0,T))$ such that
\begin{equation*}
\iint_{\mathcal O_d\times(0,T)}\varrho_3^2|y_d|^2dxdt<+\infty,
\end{equation*}
we can find a leader control $h\in L^2(\cbd\times(0,T))$ and a unique follower control $\bar v$ minimizing \eqref{func_teo3}, such that the associated solution to \eqref{sys_teo3} verifies $w(\cdot,T)=0$.
\end{theorem}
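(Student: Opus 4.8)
The plan is to run the standard two--stage hierarchic scheme: for a fixed leader, solve the follower's optimal control problem and encode its solution in an optimality system, and then solve the leader's null--controllability problem by duality; the crux is a Carleman estimate for the adjoint of that optimality system. I would proceed as follows.

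\textbf{Stage 1 (the follower).} Fix $h\in L^2(\cbd_1\times(0,T))$. Using the weight $\rho_\star$ (exponentially large at $t=0$ and $t=T$), one first checks that \eqref{sys_teo3} with $\psi\equiv 0$ is well posed in an appropriate weighted space for boundary data $h\csin{\cbd_1}+v\rho_{\cbd_2}$ with $v\in L^2(\partial\Omega\times(0,T))$ and $w^0\in H^{-1}(\Omega)$, so that $v\mapsto I(v;h)$ in \eqref{func_teo3} is well defined, continuous, strictly convex and coercive (coercivity from the penalization term, since $\rho_\star$ is bounded below). Hence there is a unique minimizer $\bar v=\bar v(h)$. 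Writing the Euler--Lagrange equation and introducing the adjoint state $p$ solving $-p_t-\Delta p=\csin{\mathcal O_d}(w-w_d)$ in $Q$, $p=0$ on $\Sigma$, $p(\cdot,T)=0$, I get $\ell^2\rho_\star^2\,\bar v=\rho_{\cbd_2}\,\partial_n p$ on $\Sigma$ (so $\bar v$ is automatically supported in $\overline{\cbd_2}$), and thus $(w,p)$ solves the forward--backward optimality system
\begin{equation*}
\begin{cases}
w_t-\Delta w=0,\qquad -p_t-\Delta p=\csin{\mathcal O_d}(w-w_d) & \text{in }Q,\\
w=h\csin{\cbd_1}+\ell^{-2}\rho_\star^{-2}\rho_{\cbd_2}^2\,\partial_n p,\qquad p=0 & \text{on }\Sigma,\\
w(\cdot,0)=w^0,\qquad p(\cdot,T)=0,
\end{cases}
\end{equation*}
whose solution is unique by the strict convexity of $I(\cdot;h)$. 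A short technical lemma (in the spirit of \cite{vhs_deT_rob,da_silva}) supplies the well--posedness of \eqref{sys_teo3} and of this system in the weighted spaces, so that $w(\cdot,T)$ makes sense and the integrations by parts below are licit; this is where $\rho_\star$ is used.

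\textbf{Stage 2 (the leader, by duality).} Now I look for $h\in L^2(\cbd_1\times(0,T))$, of minimal norm, with $w(\cdot,T)=0$. Testing the $w$--equation against $\varphi$ and the $p$--equation against $\zeta$ and matching the interior and boundary couplings leads to the adjoint system
\begin{equation*}
\begin{cases}
-\varphi_t-\Delta\varphi=\csin{\mathcal O_d}\zeta,\qquad \zeta_t-\Delta\zeta=0 & \text{in }Q,\\
\varphi=0,\qquad \zeta=\ell^{-2}\rho_\star^{-2}\rho_{\cbd_2}^2\,\partial_n\varphi & \text{on }\Sigma,\\
\varphi(\cdot,T)=\varphi^T,\qquad \zeta(\cdot,0)=0,
\end{cases}
\end{equation*}
together with the identity $\iint_{\cbd_1\times(0,T)}h\,\partial_n\varphi\,\d\sigma\dt=\langle\varphi(\cdot,0),w^0\rangle_{H^1_0,H^{-1}}-\iint_{\mathcal O_d\times(0,T)}\zeta\,w_d\,\dx\dt$, valid for every $\varphi^T$ precisely when $w(\cdot,T)=0$. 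By the usual duality/HUM argument, such an $h$ exists and is the minimal--norm one, given by $h=\partial_n\hat\varphi|_{\cbd_1}$ with $\hat\varphi$ the adjoint solution associated to the unique minimizer $\hat\varphi^{T}$ of
\begin{equation*}
\mathcal J(\varphi^T)=\frac12\iint_{\cbd_1\times(0,T)}|\partial_n\varphi|^2\,\d\sigma\dt-\langle\varphi(\cdot,0),w^0\rangle_{H^1_0,H^{-1}}+\iint_{\mathcal O_d\times(0,T)}\zeta\,w_d\,\dx\dt,
\end{equation*}
provided I can establish the observability inequality
\begin{equation*}
\|\varphi(\cdot,0)\|^2_{H^1_0(\Omega)}+\iint_{\mathcal O_d\times(0,T)}\varrho_3^{-2}|\zeta|^2\,\dx\dt\ \le\ C\iint_{\cbd_1\times(0,T)}|\partial_n\varphi|^2\,\d\sigma\dt .
\end{equation*}
The weighted $\zeta$--term is exactly what makes the last term of $\mathcal J$ continuous under the hypothesis $\iint_{\mathcal O_d\times(0,T)}\varrho_3^2|w_d|^2\,\dx\dt<\infty$, hence $\mathcal J$ coercive and strictly convex with a unique minimizer.

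\textbf{Stage 3 (the Carleman estimate --- the hard part).} The remaining task, and the technical heart, is the observability inequality. I would fix $\eta^0\in\mathcal C^2(\overline\Omega)$ with $\eta^0>0$ in $\Omega$, $\eta^0=0$ and $\partial_n\eta^0<0$ on $\partial\Omega$, and no critical point outside a small ball $\omega_0\Subset\Omega$ placed adjacent to the interior of $\cbd_1$ and away from $\cbd_2$, form the usual weights $\alpha,\xi$, and combine the Fursikov--Imanuvilov Carleman estimate for $\varphi$ (heat equation, homogeneous Dirichlet data, source $\csin{\mathcal O_d}\zeta$) with a local energy argument transferring the $\omega_0$--observation to $\partial_n\varphi|_{\cbd_1}$; this gives, with $\mathcal I(\varphi)$ the standard Carleman functional,
\begin{equation*}
\mathcal I(\varphi)+\iint_{\cbd_2\times(0,T)}s\lambda\xi e^{-2s\alpha}|\partial_n\varphi|^2\,\d\sigma\dt\ \le\ C\Big(\iint_{\mathcal O_d\times(0,T)}e^{-2s\alpha}|\zeta|^2\,\dx\dt+\iint_{\cbd_1\times(0,T)}s\lambda\xi e^{-2s\alpha}|\partial_n\varphi|^2\,\d\sigma\dt\Big).
\end{equation*}
Since $\zeta$ solves a \emph{homogeneous} heat equation with $\zeta(\cdot,0)=0$ and Dirichlet data $\ell^{-2}\rho_\star^{-2}\rho_{\cbd_2}^2\,\partial_n\varphi$ supported on $\overline{\cbd_2}$, a direct energy estimate gives $\|\zeta\|^2_{L^2(Q)}\le C\ell^{-4}\iint_{\cbd_2\times(0,T)}\rho_\star^{-4}\rho_{\cbd_2}^4|\partial_n\varphi|^2\,\d\sigma\dt$. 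Choosing $\rho_\star$ so that $\rho_\star^{-4}\rho_{\cbd_2}^4\le C\,s\lambda\xi e^{-2s\alpha}$ on $\Sigma$ --- possible precisely because $\rho_\star$ blows up exponentially at $t=0,T$ at a rate matching the decay of the Carleman weight --- the first term on the right is bounded by $C\ell^{-4}$ times the boundary term over $\cbd_2$ on the left. For $\ell\ge\ell_0$, with $\ell_0$ depending only on the Carleman constants, this term is absorbed, leaving $\mathcal I(\varphi)\le C\iint_{\cbd_1\times(0,T)}|\partial_n\varphi|^2$ and $\|\zeta\|^2_{L^2(Q)}\le C\iint_{\cbd_1\times(0,T)}|\partial_n\varphi|^2$. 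Finally, a classical dissipation estimate for the backward equation converts the interior part of $\mathcal I(\varphi)$ into $\|\varphi(\cdot,0)\|^2_{H^1_0(\Omega)}\le C\big(\mathcal I(\varphi)+\|\zeta\|^2_{L^2(Q)}\big)$, and $\iint_{\mathcal O_d\times(0,T)}\varrho_3^{-2}|\zeta|^2\le C\|\zeta\|^2_{L^2(Q)}$, where $\varrho_3$ is taken as a modified Carleman weight (constant for $t$ near $0$, blowing up at $t=T$). This closes the observability inequality, and with it Theorem \ref{teo3}. I expect this last stage to be the main obstacle: constructing $\eta^0$ adapted to a \emph{boundary} observation on $\cbd_1$ alone, and, above all, tuning $\rho_\star$ against the Carleman weight so that the $\cbd_2$--coupling term can be absorbed using only the largeness of $\ell$ --- this is what forces $\rho_\star$ to blow up exponentially at both $t=0$ and $t=T$ and dictates the compatibility among all the weights; the convex analysis of Stage 1, the weighted well--posedness lemma, and the HUM duality of Stage 2 are by now fairly standard.
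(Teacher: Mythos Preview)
Your overall architecture is right and matches the paper's: the follower's problem is a classical weighted optimal control, its characterization leads to a forward--backward system, HUM reduces the leader's task to an observability inequality for the adjoint, and --- crucially --- you correctly identify that only the $\varphi$--equation needs a Carleman estimate while the $\zeta$--equation (homogeneous heat, zero initial datum, boundary data carrying the factor $\ell^{-2}\rho_\star^{-2}$) can be handled by plain energy estimates and absorbed for $\ell$ large. That is exactly the scheme of Section~\ref{sec_bound}.

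The genuine gap is in the Carleman step. You take the Fursikov--Imanuvilov weight $\eta^0$ with $\eta^0=0$ and $\partial_n\eta^0<0$ on \emph{all} of $\partial\Omega$ and no critical point outside an interior ball $\omega_0\Subset\Omega$; this yields an \emph{interior} observation $\iint_{\omega_0\times(0,T)}e^{-2s\alpha}\xi^3|\varphi|^2$ on the right. Your ``local energy argument transferring the $\omega_0$--observation to $\partial_n\varphi|_{\cbd_1}$'' is not a standard technique and, as stated, does not work: there is no trace--type inequality bounding an interior $L^2$ norm of $\varphi$ by the normal derivative on a piece of $\partial\Omega$ (the usual domain--extension trick goes the other way, turning a boundary observation into an interior one). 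The paper instead uses the \emph{boundary} Carleman estimate of Lemma~\ref{lemma_car_boundary}, built from a weight $\bar\eta$ satisfying \eqref{constr_bound}, i.e.\ $|\nabla\bar\eta|>0$ in all of $\Omega$ and $\partial_n\bar\eta\le0$ only on $\partial\Omega\setminus\mathcal S'$ with $\mathcal S'\subset\subset\cbd_1$; this produces directly the desired observation $\iint_{\mathcal S'\times(0,T)}e^{-2s\bar\alpha}s\bar\xi|\partial_n\varphi|^2$ on the right. With that weight there is no favourable $\cbd_2$ boundary term on the left to swallow the $\zeta$--contribution as you propose, so the paper's absorption mechanism also differs from yours: one fixes $\rho_\star\ge e^{s\bar\alpha/2}$, bounds $\|e^{-s\bar\alpha^\star}\zeta\|_{L^2(Q)}$ and $\|\zeta\|_{L^2(Q)}$ by $C\ell^{-2}\|e^{-s\bar\alpha^\star}\rho_\star^{-2}\varphi\|_{H^{2,1}(Q)}$ via energy and trace estimates (eqs.~\eqref{est_theta_gorro}--\eqref{est_nonwei}), and then controls this $H^{2,1}$ norm back by the Carleman functional of $\varphi$ through the auxiliary problem for $e^{-s\bar\alpha^\star}\varphi$ (eq.~\eqref{est_final}). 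For $\ell$ large everything closes. In short: replace your interior Carleman plus the illicit transfer step by the boundary Carleman of Lemma~\ref{lemma_car_boundary}, and adjust the absorption of the $\zeta$--terms accordingly; the rest of your outline stands.
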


The role of the weight function  $\rho_\star$ is clear. By minimizing \eqref{func_teo3}, we enforce the follower $v$ to vanish at $t=0$ and $t=T$ and therefore the leader $h$ finds no obstruction to control the system during the second part of the hierarchic methodology. We refer to \cite{araruna} and \cite{vhs_corri} for a similar use of weighted functionals as \eqref{func_teo3}. 

The rest of the paper is organized as follows. In section \ref{bound_follow}, we study the robust hierarchic problem concerning the case of boundary follower and distributed leader, this is, the case of system \eqref{heat_lin}. On the first stage, we address the robust control problem with boundary control and then we deal with the null controllability of the resulting system. These allow to establish the proof of Theorem \ref{teo_main1}. In section \ref{sec_bound_leader}, we deal with the case of a boundary leader. We will make special emphasis on how hypothesis \eqref{loc_teo2} helps to obtain an observability inequality for a coupled system with only one observation at the boundary. Section \ref{sec_bound} is devoted to analyze a hierarchic control strategy (in the sense of \cite{LionsSta}) for system \eqref{func_teo3} in the case when $\psi\equiv 0$. Due to the special selection of the cost functional \eqref{func_teo3}, we are able to proof Theorem \ref{teo3}. Finally, in Section \ref{sec_conclusion} we make some concluding remarks. 

\section{The case with boundary follower and distributed leader}\label{bound_follow}

\subsection{Existence, uniqueness and characterization of the saddle point}\label{ex_uniq_saddle}


In the first step, we analyze the robust control problem associated to \eqref{func_rob} and we establish conditions on the parameters $\ell$ and $\gamma$ leading to the existence of a saddle point. The proof is closely related to the one presented in \cite{vhs_deT_rob} but, for completeness, we sketch it briefly. In what follows, we assume that the leader has made a choice $h$. 

The first thing to check is that the solution $y$ to \eqref{heat_lin} is well posed and is uniquely determined by the data of the problem. It is well-known (see, e.g., \cite{lions_magenes}) that for any $v\in L^2(\Sigma)$, $\psi\in L^2(Q)$, $h\in L^2(\omega\times(0,T))$ and $y_0\in H^{-1}(\Omega)$, system \eqref{heat_lin} admits a unique weak solution (defined by transposition) that satisfies
\begin{equation}\label{space_trans}
y\in L^2(Q)\cap C^0([0,T];H^{-1}(\Omega)).
\end{equation} 

Moreover, $y$ satisfies an estimate of the form
\begin{equation}\label{ener_trans}
\|y\|_{L^2(Q)}\leq C\left(\|y_0\|_{H^{-1}(\Omega)}+\|v\|_{L^2(\Sigma)}+\|h\|_{L^2(\omega\times(0,T))}+\|\psi\|_{L^2(Q)}\right)
\end{equation}
where $C$ is a positive constant not depending on $\psi$, $v$, $h$ nor $y_0$. 

\begin{remark}
Note that the regularity \eqref{space_trans} and the energy estimate \eqref{ener_trans} hold also for the boundary control systems \eqref{heat_dif} and \eqref{sys_teo3} by making suitable changes. 
\end{remark}

The main goal of this section is to proof the existence and uniqueness of a saddle point $(\bar v,\bar\psi)$ to the robust control problem in Definition \ref{defi_rob}. The result is based on the following result. 
\begin{proposition}\label{prop_saddle}
Let $J$ be a functional defined on $X\times Y$, where $X$ and $Y$ are non-empty, closed, unbounded, convex sets. If $J$ satisfies
\begin{enumerate}
\item $\forall \psi\in Y$, $v\mapsto J(v,\psi)$ is convex lower semicontinuous,
\item $\forall v\in X$, $\psi\mapsto J(v,\psi)$ is concave upper semicontinuous,
\item $\exists \psi_0\in X$ such that $\lim_{\|v\|_{X}\to+\infty}J(v,\psi_0)=+\infty$, 
\item $\exists v_0\in Y$ such that $\lim_{\|\psi\|_Y\to+\infty}J(v_0,\psi)=-\infty$,
\end{enumerate}
then the functional $J$ has at least one saddle point $(\bar v,\bar \psi)$ and
\begin{equation*}
J(\bar v,\bar \psi)=\min_{v\in X}\sup_{\psi\in Y} J(v,\psi)=\max_{\psi\in Y}\min_{v\in X}J(v,\psi).
\end{equation*}
Moreover, if $\{\psi\mapsto J(v,\psi)\}$ is strictly concave $\forall v\in X$ and $\{v\mapsto J(v,\psi)\}$ is strictly convex $\forall \psi\in Y$, the saddle point $(\bar v,\bar \psi)$ is unique. 
\end{proposition}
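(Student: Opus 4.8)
The plan is to recognize this as a standard minimax existence result in the spirit of Ekeland--Temam, and to reduce the unbounded case to the bounded one by truncation plus a coercivity argument. First I would fix large balls $B_R^X := \{v \in X : \|v\|_X \le R\}$ and $B_R^Y := \{\psi \in Y : \|\psi\|_Y \le R\}$, which are non-empty (for $R$ large), closed, bounded and convex, hence weakly compact in the Hilbert spaces $X$, $Y$. On $B_R^X \times B_R^Y$, hypotheses 1 and 2 (convexity/l.s.c.\ in $v$, concavity/u.s.c.\ in $\psi$, all of which upgrade to weak semicontinuity by convexity) let me invoke the classical Ky Fan--von Neumann minimax theorem to produce a saddle point $(\bar v_R, \bar\psi_R) \in B_R^X \times B_R^Y$, i.e.
\begin{equation*}
J(\bar v_R, \psi) \le J(\bar v_R, \bar\psi_R) \le J(v, \bar\psi_R), \qquad \forall (v,\psi) \in B_R^X \times B_R^Y .
\end{equation*}

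Next I would use the coercivity hypotheses 3 and 4 to show that, for $R$ large enough, the constrained saddle point lies in the interior of the balls and is therefore a genuine (global) saddle point on $X \times Y$. Concretely, from $J(\bar v_R, \bar\psi_R) \le J(v_0, \bar\psi_R)$ applied with the $v_0$ of hypothesis~4 (adjusting notation: hypothesis~3 gives coercivity of $v \mapsto J(v,\psi_0)$ and hypothesis~4 gives anti-coercivity of $\psi \mapsto J(v_0,\psi)$), I get two-sided bounds
\begin{equation*}
J(\bar v_R, \psi_0) \le J(\bar v_R, \bar\psi_R) \le J(v_0, \bar\psi_R),
\end{equation*}
and then hypothesis~3 forces $\|\bar v_R\|_X$ to stay bounded by some $R_0$ independent of $R$, while hypothesis~4 forces $\|\bar\psi_R\|_Y \le R_0$ as well. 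Taking $R > R_0$, any competitor $(v,\psi) \in X \times Y$ can be compared by writing the saddle-point inequalities on $B_R^X \times B_R^Y$ and noting that the optima over the balls are already attained strictly inside; convexity of $v \mapsto J(v, \bar\psi_R)$ and concavity of $\psi \mapsto J(\bar v_R, \psi)$ then propagate the inequality from the ball to the whole space (an interior minimizer of a convex function on a convex set is a global minimizer). This yields the saddle point on $X \times Y$, and the identity
\begin{equation*}
J(\bar v, \bar\psi) = \min_{v \in X} \sup_{\psi \in Y} J(v,\psi) = \max_{\psi \in Y} \min_{v \in X} J(v,\psi)
\end{equation*}
is the usual consequence of the saddle-point inequality (the $\le$ direction between the two sides is automatic, the $\ge$ direction comes from evaluating at $(\bar v, \bar\psi)$).

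Finally, for uniqueness under strict convexity/concavity, I would argue by contradiction: if $(\bar v_1, \bar\psi_1)$ and $(\bar v_2, \bar\psi_2)$ are two saddle points, the minimax identity shows $J(\bar v_1, \bar\psi_1) = J(\bar v_2, \bar\psi_2)$, and a short computation (a saddle point of $J$ makes $J(\bar v_i, \cdot)$ maximal at $\bar\psi_i$ and $J(\cdot, \bar\psi_j)$ minimal at $\bar v_j$, so $J(\bar v_1,\bar\psi_2) \le J(\bar v_1,\bar\psi_1) \le J(\bar v_2,\bar\psi_1) \le J(\bar v_2,\bar\psi_2) \le J(\bar v_1,\bar\psi_2)$, forcing all to be equal) combined with strict convexity of $v\mapsto J(v,\bar\psi_1)$ gives $\bar v_1 = \bar v_2$, and strict concavity of $\psi \mapsto J(\bar v_1,\psi)$ gives $\bar\psi_1 = \bar\psi_2$. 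I expect the main obstacle to be the passage from the balls to the whole space in a way that is clean rather than circular: one must be careful that the coercivity bounds $R_0$ are genuinely uniform in $R$ before choosing $R > R_0$, and that the "interior minimizer is global" step is correctly justified by convexity rather than hand-waved. Everything else is routine functional analysis; in fact, since $J_r$ in \eqref{func_rob} is quadratic, one could alternatively give a direct proof via the associated linear operator, but the abstract route above is the one matching \cite{vhs_deT_rob}.
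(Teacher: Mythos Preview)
Your proof plan is correct and follows the standard Ekeland--Temam strategy (truncation to weakly compact balls, Ky Fan--von Neumann minimax, coercivity to pull the saddle point into the interior, then strict convexity/concavity for uniqueness). The paper, however, does not give its own proof of this proposition: it simply cites \cite[Prop.~1.5 and 2.2, Ch.~VI]{Ekeland}, so your plan is in fact more detailed than what the paper provides, and is precisely the argument that reference contains.
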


 The proof can be found on \cite[Prop. 1.5 and 2.2, Ch. VI]{Ekeland}. The aim here is to apply Proposition \ref{prop_saddle} to functional \eqref{func_rob} with $X=L^2(\Sigma)$ and $Y=L^2(Q)$. To verify conditions 1--4 for our problem, we need the following auxiliary lemma.

\begin{lemma}\label{lemma_prop_sol1}
Let $h\in L^2(\omega\times(0,T))$ and $y_0\in H^{-1}(\Omega)$ be given. The mapping $(v,\psi)\mapsto y(v,\psi)$ from $L^2(\Sigma)\times L^2(Q)$ into $L^2(Q)$ is affine, continuous, and has G\^{a}teaux derivative $y^\prime(v^\prime,\psi^\prime)$ in every direction $(v^\prime,\psi^\prime)\in L^2(\Sigma)\times L^2(Q)$. Moreover, the derivative $y^\prime(v^\prime,\psi^\prime)$ solves the system
\begin{equation}\label{deriv_sys1}
\begin{cases}
y^\prime_t-\Delta y^\prime=\psi^\prime \quad &\textnormal{in $Q$}, \\
y^\prime=v^\prime\csbd \quad &\textnormal{on $\Sigma$,} \\
y^\prime(x,0)=0 \quad &\textnormal{in $\Omega$},
\end{cases}
\end{equation}
\end{lemma}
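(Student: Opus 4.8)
The statement to prove is Lemma~\ref{lemma_prop_sol1}, which asserts that the solution map $(v,\psi)\mapsto y(v,\psi)$ associated to \eqref{heat_lin} (with the leader $h$ and initial datum $y_0$ frozen) is affine, continuous from $L^2(\Sigma)\times L^2(Q)$ into $L^2(Q)$, and G\^ateaux differentiable with derivative solving \eqref{deriv_sys1}. My plan is to exploit the linearity of the heat equation together with the well-posedness estimate \eqref{ener_trans}, so that essentially no new PDE analysis is needed beyond what is already recalled in \eqref{space_trans}--\eqref{ener_trans}.

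\emph{Step 1: Affine structure.} Write $y(v,\psi)=y(0,0)+Y(v,\psi)$, where $y(0,0)$ is the solution of \eqref{heat_lin} with $v=\psi=0$ (keeping $h$ and $y_0$), and $Y(v,\psi)$ solves the same equation with $h\equiv 0$, $y_0=0$, i.e.
\begin{equation*}
\begin{cases}
Y_t-\Delta Y=\csin{\omega}\cdot 0+\psi &\text{in }Q,\\
Y=v\csbd &\text{on }\Sigma,\\
Y(x,0)=0 &\text{in }\Omega.
\end{cases}
\end{equation*}
By linearity of the heat equation and uniqueness of the transposition solution, $Y$ is linear in $(v,\psi)$: $Y(\lambda_1(v_1,\psi_1)+\lambda_2(v_2,\psi_2))=\lambda_1 Y(v_1,\psi_1)+\lambda_2 Y(v_2,\psi_2)$. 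Hence $y(v,\psi)$ is affine in $(v,\psi)$, the ``constant'' term being $y(0,0)\in L^2(Q)$.

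\emph{Step 2: Continuity.} Continuity of the affine map reduces to boundedness of the linear part $Y$. Applying \eqref{ener_trans} to the system defining $Y$ (with $h\equiv0$, $y_0=0$) gives $\|Y(v,\psi)\|_{L^2(Q)}\le C(\|v\|_{L^2(\Sigma)}+\|\psi\|_{L^2(Q)})$, so $(v,\psi)\mapsto Y(v,\psi)$ is a bounded linear operator $L^2(\Sigma)\times L^2(Q)\to L^2(Q)$, and therefore $(v,\psi)\mapsto y(v,\psi)$ is Lipschitz continuous, hence continuous. (One also invokes here the remark stating that \eqref{space_trans}--\eqref{ener_trans} hold for the boundary-control systems.)

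\emph{Step 3: G\^ateaux differentiability and identification of the derivative.} For $(v,\psi),(v',\psi')\in L^2(\Sigma)\times L^2(Q)$ and $\lambda\in\mathbb R\setminus\{0\}$, affineness gives exactly
\begin{equation*}
\frac{y\big((v,\psi)+\lambda(v',\psi')\big)-y(v,\psi)}{\lambda}=Y(v',\psi'),
\end{equation*}
independently of $\lambda$, so the limit as $\lambda\to0$ exists and equals $y'(v',\psi'):=Y(v',\psi')$, which is linear and continuous in the direction $(v',\psi')$ by Step~2. Finally, by construction $Y(v',\psi')$ solves precisely \eqref{deriv_sys1}, since the source $\psi'$ enters the interior equation, $v'\csbd$ is the boundary datum, and the initial condition is $0$; this proves the claim. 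In particular the G\^ateaux derivative does not depend on the base point $(v,\psi)$, reflecting the affine nature of the map.

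\emph{Main obstacle.} There is no real obstacle here: the lemma is a soft consequence of linearity plus the already-quoted well-posedness theory. The only point requiring minimal care is justifying that the difference quotient computation is legitimate at the level of transposition solutions — i.e. that the (unique) transposition solution depends linearly on $(\psi, v\csbd)$ — which follows immediately from the linearity of the defining duality identity and the uniqueness asserted around \eqref{space_trans}. One should also make sure the constant $C$ in \eqref{ener_trans} applied to the system for $Y$ does not secretly depend on $(v,\psi)$, which is exactly the content of the ``$C$ not depending on $\psi,v,h,y_0$'' clause already stated.
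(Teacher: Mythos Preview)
Your proof is correct and follows essentially the same approach as the paper: the paper simply remarks that affineness and continuity follow from the linearity of \eqref{heat_lin} and the energy estimate \eqref{ener_trans}, and that the G\^ateaux derivative is obtained by taking the limit $\lambda\to 0$ in the difference quotient $y^\lambda:=\frac{y(v+\lambda v',\psi+\lambda\psi')-y(v,\psi)}{\lambda}$. Your write-up merely makes these steps explicit (via the decomposition $y=y(0,0)+Y(v,\psi)$) and adds the observation that the quotient is actually constant in $\lambda$, which is a harmless elaboration of the same idea.
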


\begin{proof}
The fact that the mapping $(v,\psi)\mapsto y(v,\psi)$ is affine and continuous follow from the linearity of system \eqref{heat_lin} and the energy estimate \eqref{ener_trans}. The existence of the G\^{a}teaux derivative and its characterization can be easily obtained by taking the limit $\lambda\to 0$ in the expression $y^\lambda:=\frac{y(v+\lambda v^\prime,\psi+\lambda\psi^\prime)-y(v,\psi)}{\lambda}$.
\end{proof}

With this lemma, we can verify conditions 1-4 of Proposition \ref{prop_saddle} for the cost functional \eqref{func_rob}.
\begin{proposition}\label{verif_cond}
Let $h\in L^2(\omega\times(0,T))$ and $y_0\in H^{-1}(\Omega)$ be given. There exists $\gamma$ large enough such that we have 
\begin{enumerate}
\item $\forall \psi\in L^2(Q)$, $v\mapsto J_r(v,\psi)$ is strictly convex and lower semicontinuous,
\item $\forall v\in L^2(\Sigma)$, $\psi\mapsto J_r(v,\psi)$ is strictly concave and upper semicontinuous,
\item $\lim_{\|v\|_{L^2(\Sigma)}\to+\infty}J_r(v,0)=+\infty$, 
\item $\lim_{\|\psi\|_{L^2(Q)}\to+\infty}J_r(0,\psi)=-\infty$.
\end{enumerate}
\end{proposition}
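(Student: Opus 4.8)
\textbf{Proof plan for Proposition \ref{verif_cond}.}

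The plan is to verify each of the four conditions directly from the structure of the functional \eqref{func_rob} and the properties of the solution map established in Lemma \ref{lemma_prop_sol1}. First I would decompose $J_r$ along the affine map $(v,\psi)\mapsto y(v,\psi)$. Writing $y(v,\psi) = y(0,0) + \tilde y(v,\psi)$, where $\tilde y$ is the \emph{linear} part solving \eqref{deriv_sys1} with data $(v,\psi)$, the term $\tfrac12\iint_{\mathcal O_d\times(0,T)}|y-y_d|^2$ becomes a quadratic-plus-affine functional of $(v,\psi)$ whose quadratic form is the nonnegative form $(v,\psi)\mapsto \|\tilde y(v,\psi)\|_{L^2(\mathcal O_d\times(0,T))}^2$. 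Adding the explicit terms $\tfrac{\ell^2}{2}\|v\|_{L^2(\Sigma)}^2$ and $-\tfrac{\gamma^2}{2}\|\psi\|_{L^2(Q)}^2$, one sees $J_r$ is a (not necessarily sign-definite) quadratic functional; continuity and hence lower/upper semicontinuity in each variable is immediate from \eqref{ener_trans}, which gives $\|\tilde y(v,\psi)\|_{L^2(Q)}\le C(\|v\|_{L^2(\Sigma)}+\|\psi\|_{L^2(Q)})$.

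For condition 1, with $\psi$ fixed, $v\mapsto J_r(v,\psi)$ is the sum of the convex map $v\mapsto \tfrac12\|\,y(0,\psi)-y_d+\tilde y(v,0)\|^2_{L^2(\mathcal O_d\times(0,T))}$, the strictly convex map $v\mapsto\tfrac{\ell^2}{2}\|v\|^2_{L^2(\Sigma)}$, and terms affine in $v$; hence it is strictly convex (no largeness of $\gamma$ is needed here). For condition 2, with $v$ fixed, $\psi\mapsto J_r(v,\psi)$ is $\tfrac12\|\,y(v,0)-y_d+\tilde y(0,\psi)\|^2_{L^2(\mathcal O_d\times(0,T))} - \tfrac{\gamma^2}{2}\|\psi\|^2_{L^2(Q)}$ plus affine-in-$\psi$ terms; its second variation in the direction $\psi'$ is $\|\tilde y(0,\psi')\|^2_{L^2(\mathcal O_d\times(0,T))} - \gamma^2\|\psi'\|^2_{L^2(Q)}$, which by \eqref{ener_trans} is bounded above by $(C^2-\gamma^2)\|\psi'\|^2_{L^2(Q)}$. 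Choosing $\gamma_0 := C$ (the constant in \eqref{ener_trans}), any $\gamma>\gamma_0$ makes this strictly negative for $\psi'\neq 0$, so $\psi\mapsto J_r(v,\psi)$ is strictly concave; upper semicontinuity is again clear. This is the one place where the largeness assumption on $\gamma$ is genuinely used, and it is the main (though not severe) point of the argument.

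Finally, for the coercivity/anti-coercivity conditions 3 and 4, I would set $\psi=0$ in $J_r(v,0) = \tfrac12\|y(0,0)-y_d+\tilde y(v,0)\|^2_{L^2(\mathcal O_d\times(0,T))} + \tfrac{\ell^2}{2}\|v\|^2_{L^2(\Sigma)}\ge \tfrac{\ell^2}{2}\|v\|^2_{L^2(\Sigma)}\to+\infty$ as $\|v\|_{L^2(\Sigma)}\to\infty$, using only nonnegativity of the first term; and set $v=0$ in $J_r(0,\psi)= \tfrac12\|y(0,0)-y_d+\tilde y(0,\psi)\|^2_{L^2(\mathcal O_d\times(0,T))} - \tfrac{\gamma^2}{2}\|\psi\|^2_{L^2(Q)} \le \tfrac12\big(\|y(0,0)-y_d\|_{L^2(\mathcal O_d\times(0,T))} + C\|\psi\|_{L^2(Q)}\big)^2 - \tfrac{\gamma^2}{2}\|\psi\|^2_{L^2(Q)}$, which tends to $-\infty$ as $\|\psi\|_{L^2(Q)}\to\infty$ precisely because $\gamma>\gamma_0=C$ forces the quadratic coefficient $\tfrac{C^2-\gamma^2}{2}$ to be negative. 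Expanding the square and collecting the leading term in $\|\psi\|_{L^2(Q)}$ gives the claim. Combining these four verified conditions with Proposition \ref{prop_saddle} then yields existence and uniqueness of the saddle point $(\bar v,\bar\psi)$, which is the intended consequence; but the proposition itself is precisely the verification of items 1--4 just sketched.
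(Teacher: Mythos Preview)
Your proof is correct and follows essentially the same approach as the paper: both use the affine structure of $(v,\psi)\mapsto y(v,\psi)$ from Lemma~\ref{lemma_prop_sol1} together with the energy estimate \eqref{ener_trans} to verify conditions 1--4, with the key step being the computation of the second variation in $\psi$ to obtain strict concavity once $\gamma$ exceeds the constant from \eqref{ener_trans}. Your decomposition $y(v,\psi)=y(0,0)+\tilde y(v,\psi)$ makes the quadratic structure slightly more explicit than the paper's presentation, but the underlying argument is the same.
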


\begin{proof}
\textit{Condition 1.} By Lemma \ref{lemma_prop_sol1}, the map $v\mapsto J(v,\psi)$ is lower semicontinuous. Since $v\mapsto y(v,\psi)$ is linear and affine a straightforward computation yields the strict convexity of $J_r(v,\psi)$. 

\smallskip\noindent
\textit{Condition 2.} Again, by Lemma \ref{lemma_prop_sol1}, we deduce that the map $\psi\mapsto J(v,\psi)$ is upper semicontinuous. To prove strict concavity, we will argue as in \cite{temam_nonlinear}. Consider
\begin{equation*}
\mathcal{G}(\tau)=J_r(v,\psi+\tau \psi^\prime).
\end{equation*}
We will show that $\mathcal G(\tau)$ is sctrictly concave with respect to $\tau$, i.e. $\mathcal G^{\prime\prime}(\tau)<0$. We compute
\begin{equation*}
\mathcal G^\prime(\tau)=\iint_{\mathcal O_d\times(0,T)}(y+\tau y^\prime-y_d)y^\prime\dx\dt-\gamma^2\iint_{Q}(\psi+\tau\psi^\prime)\psi^\prime\dx\dt
\end{equation*}
where $y^\prime$ is solution to \eqref{deriv_sys1} with $v^\prime=0$. Since $y^\prime$ is clearly independent of $\tau$, a further computation yields
\begin{equation*}
\mathcal G^{\prime\prime}(\tau)=\iint_{\mathcal O_d\times(0,T)}|y^\prime|^2\dx\dt-\gamma^2\iint_{Q}|\psi^\prime|^2\dx\dt.
\end{equation*}
Using standard energy estimates for the heat equation, it is not difficult to see that 
\begin{equation*}
G^{\prime\prime}(\tau)\leq -(\gamma^2-C)\|\psi^\prime\|^2_{L^2(Q)}, \quad \forall \psi^\prime\in L^2(Q),
\end{equation*}
where $C=C(\Omega,\mathcal O_d,T)$ is a positive constant. We deduce that for a sufficiently large value of $\gamma$, we have $\mathcal G^{\prime\prime}(\tau)<0$, $\forall \tau\in \mathbb{R}$. Thus, the function $\mathcal G$ is striclty concave, and the strict concavity of $\psi$ follows immediately. 

\smallskip\noindent
\textit{Condition 3.} It is straightforward, taking $\psi=0$ in \eqref{func_rob} the result follows. 

\smallskip\noindent
\textit{Condition 4.} Since \eqref{heat_lin} is linear and using the estimate \eqref{ener_trans}, we have
\begin{equation*}
J(\psi,0)\leq \tilde C+C\|\psi\|_{L^2(Q)}^2-\frac{\gamma^2}{2}\|\psi\|_{L^2(Q)}^2
\end{equation*}
where $\tilde C$ is a positive constant only depending on $y_0$, $h$ and $y_d$. Hence, for $\gamma$ large enough, condition 4 holds. This concludes the proof.
\end{proof}

Combining the statements of Propositions \ref{prop_saddle} and \ref{verif_cond}, we are able to deduce the existence of at most one saddle point $(\bar v,\bar \psi)\in L^2(\Sigma)\times L^2(Q)$ for the cost functional \eqref{func_rob}. In particular, the existence of this saddle point implies that 
\begin{equation*}
\frac{\partial J_r}{\partial v}(\bar v,\bar \psi)=0 \quad \text{and}\quad \frac{\partial J_r}{\partial \psi}(\bar v,\bar \psi)=0.
\end{equation*}
By differentiating \eqref{func_rob}, we obtain the expressions 
\begin{align}\label{deriv_J1}
&\left(\frac{\partial J_r}{\partial v}(v,\psi),(v^\prime,0)\right)=\iint_{\mathcal O_d\times(0,T)}(y-y_d)y_v\,dxdt+\ell^2\iint_{\Sigma}vv^\prime\,d\sigma dt, \\ \label{deriv_J2}
&\left(\frac{\partial J_r}{\partial v}(v,\psi),(0,\psi^\prime)\right)=\iint_{\mathcal O_d\times(0,T)}(y-y_d)y_\psi\,dxdt-\gamma^2\iint_{Q}\psi\psi^\prime\,dxdt,
\end{align}
where we have denoted $y_v$ and $y_\psi$ the directional derivatives of $y$ solution to \eqref{heat_lin} (see Lemma \ref{lemma_prop_sol1}) in the directions $(v^\prime,0)$ and $(0,\psi^\prime)$, respectively. 

To characterize the solution to the robust control problem, we introduce the adjoint state to system \eqref{deriv_sys1}
\begin{equation}\label{adj_frontera}
\begin{cases}
-q_t-\Delta q=(y-y_d)\chi_{\mathcal O_d} \quad& \text{in }Q, \\
q=0 \quad &\textnormal{on $\Sigma$,} \\
q(x,T)=0 \quad &\textnormal{in $\Omega$}.
\end{cases}
\end{equation}

Multiplying \eqref{adj_frontera} by $y_v$ in $L^2(Q)$ and integrating by parts, we obtain  
\begin{equation*}
\iint_{\mathcal O_d\times(0,T)}(y-y_d)y_v\,dxdt+\iint_{\Sigma}\csbd v^\prime\frac{\partial q}{\partial n}\,d\sigma dt=0,
\end{equation*}
and upon substituting in \eqref{deriv_J1}, we get 
\begin{equation*}
\frac{\partial J_r}{\partial v}(v,\psi)=\frac{\partial q}{\partial n}\csbd-\ell^2v  
\end{equation*}
In a similar fashion, we multiply \eqref{adj_frontera} by $y_\psi$ in $L^2(Q)$ and integrate by parts. Then, from \eqref{deriv_J2} we deduce that 
\begin{equation*}
\frac{\partial J_r}{\partial \psi}(v,\psi)=q-\gamma^2\psi. 
\end{equation*}

Thus, so far, we have proved the following Proposition.
\begin{proposition}
Let $y_0\in H^{-1}(\Omega)$ and $h\in L^2(\omega\times(0,T))$ be given. Then
\begin{equation*}
\bar v=\frac{1}{\ell^2}\frac{\partial q}{\partial n}\csbd \quad\text{and}\quad \bar \psi=\frac{1}{\gamma^2}q
\end{equation*}
are the solution to the robust control problem stated in Definition \ref{defi_rob}, where $q$ is found from the solution $(y,q)$ to the optimality system 
\begin{equation}\label{sys_foll}
\begin{cases}
y_t-\Delta y= \chi_\omega h+\frac{1}{\gamma^2}q \quad&\textnormal{in }Q, \\
-q_t-\Delta q=(y-y_d)\chi_{\mathcal O_d} \quad& \textnormal{in }Q, \\
y=\frac{1}{\ell^2}\frac{\partial q}{\partial n}\csbd, \quad q=0 \quad &\textnormal{on $\Sigma$,} \\
y(x,0)=y_0(x), \quad q(x,T)=0 \quad &\textnormal{in $\Omega$}.
\end{cases}
\end{equation}
which admits a unique solution for $\gamma>0$ large enough. 
\end{proposition}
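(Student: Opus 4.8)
Most of the work has in fact been carried out in the computations preceding the statement; the plan is to assemble those pieces and then to establish that the coupled system \eqref{sys_foll} is uniquely solvable.

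First I would invoke Propositions \ref{prop_saddle} and \ref{verif_cond}: for $\gamma$ large the functional \eqref{func_rob} possesses a unique saddle point $(\bar v,\bar\psi)\in L^2(\Sigma)\times L^2(Q)$, which, by the differentiability established in Lemma \ref{lemma_prop_sol1}, is characterized by $\frac{\partial J_r}{\partial v}(\bar v,\bar\psi)=0$ and $\frac{\partial J_r}{\partial \psi}(\bar v,\bar\psi)=0$. Introducing the adjoint state $q$ solving \eqref{adj_frontera} with $y=\bar y$ and performing the integrations by parts already displayed above, these two identities read $\bar v=\frac{1}{\ell^2}\frac{\partial q}{\partial n}\csbd$ and $\bar\psi=\frac{1}{\gamma^2}q$. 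Plugging these expressions into \eqref{heat_lin} and coupling with \eqref{adj_frontera} yields exactly \eqref{sys_foll}. Moreover, since $q$ solves a heat equation with right-hand side $(y-y_d)\chi_{\mathcal O_d}\in L^2(Q)$ and homogeneous lateral and final data, parabolic regularity gives $q\in L^2(0,T;H^2(\Omega)\cap H^1_0(\Omega))\cap H^1(0,T;L^2(\Omega))$; in particular $\frac{\partial q}{\partial n}\in L^2(\Sigma)$, so $\bar v\in L^2(\Sigma)$, $\bar\psi\in L^2(Q)$, and the corresponding $\bar y$ is a genuine transposition solution of \eqref{heat_lin} by \eqref{ener_trans}. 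This already provides the existence of a solution $(y,q)=(\bar y,q)$ to \eqref{sys_foll}.

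It remains to prove uniqueness for \eqref{sys_foll}, and by linearity it suffices to treat the homogeneous case $h\equiv 0$, $y_d\equiv 0$, $y^0\equiv 0$ and show that $(y,q)\equiv(0,0)$. Multiplying the $q$-equation by $y$ and the $y$-equation by $q$, integrating over $Q$, and using Green's formula together with $q|_\Sigma=0$, $y|_\Sigma=\frac{1}{\ell^2}\frac{\partial q}{\partial n}\csbd$, $q(\cdot,T)=0$, $y(\cdot,0)=0$ (which cancel all interior and temporal boundary contributions), one arrives at
\[
\iint_{\mathcal O_d\times(0,T)}|y|^2\dx\dt+\frac{1}{\ell^2}\iint_{\Sigma}\csbd\Big|\frac{\partial q}{\partial n}\Big|^2\d\sigma\dt=\frac{1}{\gamma^2}\iint_Q|q|^2\dx\dt.
\]
Since here $q$ solves the backward heat equation with source $y\chi_{\mathcal O_d}$ and zero final datum, the standard energy estimate gives $\|q\|_{L^2(Q)}\le C\|y\|_{L^2(\mathcal O_d\times(0,T))}$ with $C=C(\Omega,\mathcal O_d,T)$; inserting this into the identity above leaves $\big(1-\frac{C^2}{\gamma^2}\big)\|y\|_{L^2(\mathcal O_d\times(0,T))}^2\le 0$, so for $\gamma>C$ we get $y\equiv 0$ on $\mathcal O_d\times(0,T)$ and $\frac{\partial q}{\partial n}\equiv 0$ on $\supp\csbd$. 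Then $q$ solves a backward heat equation with zero source and zero final datum, hence $q\equiv 0$, and finally $y$ solves \eqref{heat_lin} with zero source, boundary and initial data, hence $y\equiv 0$. This would establish the claimed unique solvability of \eqref{sys_foll} for all $\gamma$ sufficiently large.

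I expect the main technical obstacle to be making the energy identity in the last paragraph fully rigorous: $y$ is only a solution by transposition, so the pairing $\iint_Q q\,(y_t-\Delta y)$ and the boundary term $\iint_\Sigma \frac{\partial q}{\partial n}\,y$ must be read off from the very definition of the transposition solution (using that $q$ is an admissible test function, thanks to its $H^2$-in-space regularity), or else be recovered by approximating $y^0$ and the data by smooth functions and passing to the limit. One should also keep in mind that the cancellation of the boundary terms depends precisely on the structure $q|_\Sigma=0$ and $y|_\Sigma=\frac{1}{\ell^2}\frac{\partial q}{\partial n}\csbd$ of \eqref{sys_foll}; a different placement of the controls would leave an uncontrolled boundary contribution.
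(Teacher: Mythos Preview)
Your derivation of the characterization $\bar v=\frac{1}{\ell^2}\frac{\partial q}{\partial n}\csbd$, $\bar\psi=\frac{1}{\gamma^2}q$ is exactly the argument the paper carries out in the text immediately preceding the proposition: existence and uniqueness of the saddle point from Propositions~\ref{prop_saddle}--\ref{verif_cond}, vanishing of the partial G\^{a}teaux derivatives, and the duality computation with the adjoint \eqref{adj_frontera}. On that part there is no difference.

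Where you diverge is in the justification of the final clause ``which admits a unique solution''. The paper does not give a separate PDE argument for this; it relies (implicitly) on the variational structure: strict convexity in $v$ and strict concavity in $\psi$ mean that every solution of the first-order system \eqref{sys_foll} corresponds to a saddle point of $J_r$, and the saddle point is unique. Your alternative is a direct energy identity for the homogeneous system, obtaining
\[
\iint_{\mathcal O_d\times(0,T)}|y|^2\dx\dt+\frac{1}{\ell^2}\iint_{\Sigma}\csbd\Big|\frac{\partial q}{\partial n}\Big|^2\d\sigma\dt=\frac{1}{\gamma^2}\iint_Q|q|^2\dx\dt,
\]
and closing with the parabolic estimate $\|q\|_{L^2(Q)}\le C\|y\|_{L^2(\mathcal O_d\times(0,T))}$. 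This is correct and self-contained; it has the advantage of making the threshold on $\gamma$ explicit ($\gamma>C(\Omega,\mathcal O_d,T)$) and of not relying on convex-analysis machinery. The price, which you rightly flag, is that the pairing must be read through the definition of the transposition solution (using that $q\in L^2(0,T;H^2\cap H^1_0)\cap H^1(0,T;L^2)$ is an admissible test function), whereas the paper's route avoids this regularity bookkeeping entirely.
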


\begin{remark}
As in other robust control problems (cf. \cite{aziz,temam_nonlinear}), the characterization of the saddle point leads to a system of coupled equations. This characterization needs to be considered in the following step of the hierarchic control methodology, where a null control $h$ of minimal norm must be designed.
\end{remark}

\subsection{Null controllability}\label{sec_null_1}

In this section, we will proof an observability inequality that allows to establish the null controllability of system \eqref{sys_foll}. It is classical by now that null controllability is related to the observability of a proper adjoint system (see, for instance, \cite{cara_guerrero,zab}). 

For our particular case, let us consider the adjoint of system \eqref{sys_foll}:
\begin{equation}\label{adj_sys_foll}
\begin{cases}
-\varphi_t-\Delta \varphi= \theta\chi_{\mathcal O_d} \quad&\textnormal{in }Q, \\
\theta_t-\Delta \theta=\frac{1}{\gamma^2}\varphi \quad& \textnormal{in }Q, \\
\varphi=0, \quad \theta=\frac{1}{\ell^2}\frac{\partial \varphi}{\partial n}\csbd \quad &\textnormal{on $\Sigma$,} \\
\varphi(x,T)=\varphi^T(x), \quad \theta(x,0)=0 \quad &\textnormal{in $\Omega$}.
\end{cases}
\end{equation}
where $\varphi^T\in H^{1}_0(\Omega)$. Then, the controllability of \eqref{sys_foll} can be characterized in terms of appropriate properties of the solutions to \eqref{adj_sys_foll}. More precisely, we have the following proposition.

\begin{proposition}\label{prop_control}
Assume that $\omega\cap\mathcal O_d\neq \emptyset$ and $\ell,\gamma$ are large enough. The following statements are equivalent.
\begin{enumerate}
\item There exists a positive constant $C$, such that for any $y_0\in H^{-1}(\Omega)$ and any $y_d\in L^2(Q)$ such that \eqref{integ_yd} holds, there exists a control $h\in L^2(\omega\times(0,T))$ of minimal norm such that 
\begin{equation*}
\|h\|_{L^2(\omega\times(0,T))}\leq \sqrt C\left(\|y_0\|_{H^{-1}(\Omega)}+\|\rho y_d\|_{L^2(Q)}\right)
\end{equation*}
and the associated state satisfies $y(\cdot,T)=0$, where $y$ is the first component of \eqref{sys_foll}.  
\item There exist a positive constant $C$ and a weight function $\rho$ blowing up at $t=T$ such that the observability inequality 
\begin{equation}\label{obs_ineq_1}
\|\varphi(0)\|_{H^1_{0}(\Omega)}^2+\iint_Q \rho^{-2}|\theta|^2dxdt\leq C\iint_{\omega\times(0,T)}|\varphi|^2dxdt
\end{equation}
holds for every $\varphi^T\in H^1_{0}(\Omega)$, where $(\varphi,\theta)$ is the solution to \eqref{adj_sys_foll} associated to the initial datum $\varphi^T$. 
\end{enumerate}
\end{proposition}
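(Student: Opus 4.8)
The equivalence between the null controllability statement (1) and the observability inequality (2) is a by-now standard duality argument, so I would organize the proof around the classical Hilbert Uniqueness Method adapted to the coupled system \eqref{sys_foll}. The plan is to show (2) $\Rightarrow$ (1) by a penalization/minimization scheme, and (1) $\Rightarrow$ (2) by the usual open-mapping type reasoning; the genuinely substantive content — the actual proof that \eqref{obs_ineq_1} holds — is deferred to a separate Carleman-estimate argument, and here one only establishes the equivalence. First I would set up the duality pairing: multiply the first equation of \eqref{sys_foll} by $\varphi$ and the second by $\theta$, integrate over $Q$, and integrate by parts using the boundary conditions $y=\tfrac{1}{\ell^2}\csbd\,\partial_n q$, $q=0$ on $\Sigma$ and $\varphi=0$, $\theta=\tfrac{1}{\ell^2}\csbd\,\partial_n\varphi$ on $\Sigma$. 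The boundary term $\iint_\Sigma \csbd\,\partial_n q\,\partial_n\varphi$ that appears is \emph{symmetric} in the two systems — this is exactly why the follower control is written as $\tfrac{1}{\ell^2}\csbd\,\partial_n q$ — so it cancels when one compares the two integrations by parts, leaving an identity of the form $\langle y(\cdot,T),\varphi^T\rangle = \langle y_0,\varphi(\cdot,0)\rangle + \iint_{\omega\times(0,T)} h\,\varphi - \iint_{\mathcal O_d\times(0,T)} y_d\,\theta$.

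For (2) $\Rightarrow$ (1), I would introduce, for $\varepsilon>0$, the functional
\begin{equation*}
J_\varepsilon(\varphi^T)=\frac12\iint_{\omega\times(0,T)}|\varphi|^2\,dxdt+\varepsilon\|\varphi^T\|_{H^1_0(\Omega)}+\langle y_0,\varphi(\cdot,0)\rangle-\iint_{\mathcal O_d\times(0,T)}y_d\,\theta\,dxdt,
\end{equation*}
defined on $H^1_0(\Omega)$, where $(\varphi,\theta)$ solves \eqref{adj_sys_foll} with datum $\varphi^T$. The term $\iint y_d\theta$ must be controlled: by an energy estimate for the $\theta$-equation one has $\|\rho^{-1}\theta\|_{L^2(Q)}$ bounded, so Cauchy--Schwarz gives $|\iint_{\mathcal O_d\times(0,T)} y_d\theta|\le \|\rho y_d\|_{L^2(Q)}\|\rho^{-1}\theta\|_{L^2(Q)}$, which is finite precisely because of hypothesis \eqref{integ_yd} and is dominated by the left side of \eqref{obs_ineq_1}. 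Coercivity of $J_\varepsilon$ then follows from \eqref{obs_ineq_1}; $J_\varepsilon$ is continuous, strictly convex, so it has a unique minimizer $\widehat\varphi^T_\varepsilon$. Writing the Euler--Lagrange equation and taking $h_\varepsilon=\widehat\varphi_\varepsilon|_{\omega\times(0,T)}$, one obtains a state with $\|y_\varepsilon(\cdot,T)\|_{H^{-1}}\le\varepsilon$ and the uniform bound $\|h_\varepsilon\|_{L^2(\omega\times(0,T))}\le\sqrt C\,(\|y_0\|_{H^{-1}}+\|\rho y_d\|_{L^2(Q)})$. Passing to the limit $\varepsilon\to 0$ (weak compactness in $L^2$) yields a control $h$ with $y(\cdot,T)=0$ satisfying the stated estimate; minimality of the norm comes from selecting the limit of minimizers, or equivalently projecting onto the affine subspace of admissible controls.

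For the converse (1) $\Rightarrow$ (2), I would note that the map $\varphi^T\mapsto (\varphi|_{\omega\times(0,T)})$ together with the attainability of every target from every $(y_0,y_d)$ forces, via the closed graph / uniform boundedness principle applied to the solution operator and the duality identity above, the existence of $C$ such that $\|\varphi(0)\|_{H^1_0}^2+\|\rho^{-1}\theta\|_{L^2(Q)}^2\le C\|\varphi\|_{L^2(\omega\times(0,T))}^2$; the weight $\rho$ is the one dictated by the energy estimate for $\theta$ that already appeared. I expect the main obstacle to be bookkeeping the coupling terms correctly: one must check that the lower-order coupling $\tfrac{1}{\gamma^2}q$ (resp. $\tfrac{1}{\gamma^2}\varphi$) does not destroy coercivity of $J_\varepsilon$, which is where the hypothesis that $\gamma,\ell$ are large enters — a perturbation/absorption argument shows the coupling terms are small relative to the dominant observation term. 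The delicate point is that \eqref{obs_ineq_1} is not a clean $L^2$ observability but a weighted one incorporating $\|\theta\|$, so one has to be careful that the weight $\rho$ used in the definition of $J_\varepsilon$ is consistent with the one in \eqref{obs_ineq_1} and with the integrability assumption \eqref{integ_yd} on $y_d$; reconciling these three appearances of the weight is the part that requires the most care, though it is routine once the Carleman weight has been fixed in the subsequent subsection.
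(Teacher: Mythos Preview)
Your proposal is correct and follows essentially the same approach as the paper: the paper explicitly omits a detailed proof, remarking that the equivalence ``is by now standard and relies on several classical arguments,'' and then sketches precisely the penalized HUM scheme you describe, introducing the same functional $F_\varepsilon(\varphi^T)=\tfrac12\iint_{\omega\times(0,T)}|\varphi|^2+\tfrac{\varepsilon}{2}\|\varphi^T\|_{H_0^1}+\langle y_0,\varphi(0)\rangle_{-1,1}-\iint_Q\theta y_d$, establishing coercivity via \eqref{obs_ineq_1}, taking $h_\varepsilon=\varphi_\varepsilon|_\omega$, and passing to the limit $\varepsilon\to 0$. Your additional details (the duality identity, the explicit treatment of the $y_d\theta$ term via the weight $\rho$, and the closed-graph argument for the converse) are accurate and go somewhat beyond what the paper spells out, but the underlying strategy is identical.
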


Observe that the first statement implies Theorem \ref{teo_main1}. The equivalence between the statements of Proposition \ref{prop_control} is by now standard and relies on several classical arguments, so we omit the proof. The main idea follows from the well-known Hilbert uniqueness method (HUM for short), see, e.g., \cite{glo_lions_he,boyer_HUM}. For each $\varepsilon>0$, we consider the functional
\begin{equation}\label{func_h_leader}
F_\varepsilon(\varphi^T)=\frac{1}{2}\iint_{\omega\times(0,T)}|\varphi|^2\dx\dt+\frac{\varepsilon}{2}\|\varphi^T\|_{H_0^1(\Omega)}+\langle y_0,\varphi(0)\rangle_{-1,1}-\iint_{Q}\theta y_d\dx\dt,
\end{equation}
where $\langle\cdot,\cdot\rangle_{-1,1}$ denotes the duality product between $H^{-1}(\Omega)$ and $H_0^1(\Omega)$. One can prove that \eqref{func_h_leader} is continuous and strictly convex. Moreover, thanks to \eqref{obs_ineq_1} we can prove that $F_\varepsilon$ is coercive and therefore it has a unique minimizer that we denote as $\varphi_\varepsilon^T$. In this way, for each $\varepsilon>0$, we can construct controls $h_\varepsilon=\varphi_\varepsilon|_{\omega}$, where $\varphi_\varepsilon$ is the associated solution to \eqref{adj_sys_foll} with initial datum $\varphi_\varepsilon^T$, such that the controlled system \eqref{sys_foll} verifies $\|y(T)\|_{H^{-1}(\Omega)}\leq \varepsilon$. This fact, together with some uniform estimates on $h_\varepsilon$, enable us to obtain a null control as a limit process as $\varepsilon\to 0$. For the interested reader, we refer to \cite{araruna,vhs_deT_rob} where a detailed exposition is addressed for similar hierarchic control problems. 

For this reason, in what follows, we only focus on proving the estimate \eqref{obs_ineq_1}. To do so, let us introduce the Hilbert space
\begin{equation*}
W(Q):=\left\{u\in L^2(0,T;H^1(\Omega)), \ u_t\in L^2(0,T;H^{-1}(\Omega)) \right\},
\end{equation*}
equipped with the norm
\begin{equation*}
\|u\|_{W(Q)}=\left(\|u\|_{L^2(0,T;H^1(\Omega))}^2+\|u_t\|^2_{L^2(0,T;H^{-1}(\Omega))}\right)^{1/2},
\end{equation*}
and, for $r,s\geq 0$, we denote the Hilbert spaces $H^{r,s}(Q)=L^2(0,T;H^r(\Omega))\cap H^s(0,T;L^2(\Omega))$ endowed with  norms
\begin{equation}\label{norm_hrs}
\|u\|_{H^{r,s}(Q)}=\left(\|u\|^2_{L^2(0,T;H^r(\Omega))}+\|u\|_{H^s(0,T;L^2(\Omega))}^2\right)^{1/2}.
\end{equation}
\begin{remark}
We shall replace $Q$ and $\Omega$ by $\Sigma$ and $\partial\Omega$ in \eqref{norm_hrs} to refer to the analogous space for functions defined on the boundary.  
\end{remark}

The observability inequality \eqref{obs_ineq_1} will be consequence of global Carleman inequalities for parabolic equations with non-homogenous boundary conditions obtained in \cite{ima_yama_boundary}. To apply them, we need first to prove the following regularity result for the solutions to \eqref{adj_sys_foll}.

\begin{proposition}\label{prof_regularity}
Assume that $\varphi^T\in H_0^1(\Omega)$ and that $\ell,\gamma$ are large enough. Then, \eqref{adj_sys_foll} admits a unique solution $(\varphi,\theta)\in H^{2,1}(Q)\times W(Q)$. Moreover, $\frac{\partial \varphi}{\partial n}\in H^{\frac12,\frac14}(\Sigma)$. 
\end{proposition}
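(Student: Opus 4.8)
The plan is to treat the coupled system \eqref{adj_sys_foll} as a fixed-point problem and bootstrap the regularity of each component by appealing to the classical linear theory for the heat equation with non-homogeneous boundary data (as in Lions--Magenes \cite{lions_magenes}), exploiting that the coupling is weak for $\ell,\gamma$ large. First I would set up the fixed-point map: given $\widetilde\theta\in W(Q)$, solve the backward equation $-\varphi_t-\Delta\varphi=\widetilde\theta\,\chi_{\mathcal O_d}$ with $\varphi=0$ on $\Sigma$ and $\varphi(T)=\varphi^T\in H^1_0(\Omega)$; since $\widetilde\theta\chi_{\mathcal O_d}\in L^2(Q)$ and $\varphi^T\in H^1_0(\Omega)$, maximal parabolic regularity gives $\varphi\in H^{2,1}(Q)$ with $\varphi\in C^0([0,T];H^1_0(\Omega))$ and the corresponding estimate $\|\varphi\|_{H^{2,1}(Q)}\leq C(\|\varphi^T\|_{H^1_0(\Omega)}+\|\widetilde\theta\|_{L^2(Q)})$. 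The trace theorem for $H^{2,1}(Q)$ then yields $\frac{\partial\varphi}{\partial n}\in H^{\frac12,\frac14}(\Sigma)$ with continuous dependence, and in particular $\frac{1}{\ell^2}\frac{\partial\varphi}{\partial n}\csbd\in H^{\frac12,\frac14}(\Sigma)$, which is precisely the compatibility class needed to solve the forward equation for $\theta$.

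Next I would solve the forward problem $\theta_t-\Delta\theta=\frac{1}{\gamma^2}\varphi$ in $Q$, $\theta=\frac{1}{\ell^2}\frac{\partial\varphi}{\partial n}\csbd$ on $\Sigma$, $\theta(0)=0$. The source $\frac{1}{\gamma^2}\varphi\in L^2(Q)$ and the boundary datum belongs to $H^{\frac12,\frac14}(\Sigma)$; since $\theta(0)=0$ the zero-order compatibility condition is satisfied, so the linear theory produces a unique $\theta\in H^{2,1}(Q)\hookrightarrow W(Q)$ — in fact the boundary datum in $H^{\frac12,\frac14}(\Sigma)$ is exactly the right space for $H^{2,1}$ regularity up to the boundary. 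This defines $\Lambda:\widetilde\theta\mapsto\theta$ from $W(Q)$ into itself (or better, into a ball of $H^{2,1}(Q)$). To get a genuine solution of the coupled system I would show $\Lambda$ is a contraction: chaining the two estimates, $\|\Lambda\widetilde\theta_1-\Lambda\widetilde\theta_2\|_{W(Q)}\leq \frac{C}{\ell^2\gamma^2}(1+\cdots)\|\widetilde\theta_1-\widetilde\theta_2\|_{L^2(Q)}$ — more precisely the two estimate constants combine with the factors $\ell^{-2}$ and $\gamma^{-2}$, and possibly a factor absorbable by the smallness of $T$ on short intervals and then iterated — so for $\ell,\gamma$ large enough the map is a strict contraction and the Banach fixed point theorem gives a unique $\theta$, hence a unique $(\varphi,\theta)$ in the asserted spaces, together with $\frac{\partial\varphi}{\partial n}\in H^{\frac12,\frac14}(\Sigma)$.

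The main obstacle I anticipate is the careful tracking of the boundary regularity and the matching of compatibility conditions: one must verify that $\frac{\partial\varphi}{\partial n}\csbd\in H^{\frac12,\frac14}(\Sigma)$ (using that $\csbd$ is smooth and compactly supported inside a smooth piece of $\partial\Omega$, so multiplication preserves the anisotropic Sobolev class), and that the non-homogeneous boundary-value problem for $\theta$ with data in exactly this class together with vanishing initial datum admits an $H^{2,1}(Q)$ solution — this is the statement in \cite{lions_magenes} but applying it requires checking the compatibility relations at $t=0$, which hold here because $\theta(0)=0$ and, after one bootstrap, $\frac{\partial\varphi}{\partial n}\csbd$ vanishes in the appropriate trace sense at $t=0$ (since $\varphi$ is smooth enough and its normal derivative is controlled). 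A secondary technical point is making the contraction constant genuinely small: one may need to first prove existence-uniqueness on a small time interval $[0,\delta]$ where the $\delta$-dependence of the estimates supplies the smallness, and then propagate by the semigroup/concatenation argument up to $T$, or alternatively simply rely on the $\ell^{-2}\gamma^{-2}$ factor being small, which is the route consistent with the hypothesis ``$\ell,\gamma$ large enough'' already used throughout the section.
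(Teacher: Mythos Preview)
Your strategy is exactly the one the paper uses: a Banach fixed-point argument where one first solves the backward $\varphi$-equation for a given source $\widetilde\theta$, gains $H^{2,1}(Q)$ regularity, extracts $\frac{\partial\varphi}{\partial n}\in H^{\frac12,\frac14}(\Sigma)$ via the trace theorem, feeds this into the forward $\theta$-equation, and closes the loop using the smallness of $\ell^{-2},\gamma^{-2}$. The paper runs the fixed point on $L^2(Q)$ rather than $W(Q)$, which is slightly cleaner, but the skeleton is identical.

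There is, however, one genuine slip. You assert that with Dirichlet boundary datum in $H^{\frac12,\frac14}(\Sigma)$ and $L^2$ source the forward problem yields $\theta\in H^{2,1}(Q)$; this is false. The trace map $u\mapsto u|_{\Sigma}$ sends $H^{2,1}(Q)$ onto $H^{\frac32,\frac34}(\Sigma)$, so $H^{\frac12,\frac14}(\Sigma)$ is one order too weak for $H^{2,1}$ regularity of $\theta$. What Lions--Magenes (Ch.~4, \S15.5) actually gives you with this boundary class is $\theta\in W(Q)$, which is precisely what the proposition claims and what the paper establishes. Your subsequent discussion of compatibility at $t=0$ (and the claim that $\frac{\partial\varphi}{\partial n}\csbd$ vanishes there in some trace sense) is therefore unnecessary, and in fact that vanishing need not hold: $\varphi(0)$ is determined by the backward dynamics from $\varphi^T$ and has no reason to have zero Neumann trace. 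Drop the $H^{2,1}$ claim for $\theta$ and aim directly for $W(Q)$.

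A minor correction: the two small parameters do not multiply. The estimate for $\theta$ is
\[
\|\theta\|_{W(Q)}\leq C\Big(\tfrac{1}{\gamma^2}\|\varphi\|_{L^2(Q)}+\tfrac{1}{\ell^2}\big\|\tfrac{\partial\varphi}{\partial n}\csbd\big\|_{H^{\frac12,\frac14}(\Sigma)}\Big)\leq \frac{C}{\mu}\big(\|\widetilde\theta\|_{L^2(Q)}+\|\varphi^T\|_{H^1_0(\Omega)}\big),\quad \mu=\min\{\ell^2,\gamma^2\},
\]
so the contraction constant is $C/\mu$, not $C/(\ell^2\gamma^2)$. This does not affect the conclusion, since $C/\mu<1$ for $\ell,\gamma$ large, and no short-time argument is needed.
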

\begin{proof}
For any arbitrary $\bar \theta\in L^2(Q)$, let us consider the system
\begin{equation}\label{heat_fixed}
\begin{cases}
-\varphi_t-\Delta \varphi=\bar\theta\chi_{\mathcal O_d}, & \text{in Q}, \\
\varphi=0 &\text{on } \Sigma, \\
\varphi(x,T)=\varphi^T(x), & \text{in } \Omega.
\end{cases}
\end{equation}
Then, from classical regularity results (see, for instance, \cite{evans}), we have that 
\begin{equation*}
\varphi\in L^2(0,T;H^2(\Omega))\cap L^\infty(0,T;H_0^1(\Omega)), \quad \varphi_t\in L^2(Q).
\end{equation*}
This implies that $\varphi\in H^{2,1}(Q)$. Moreover, there exists a positive constant $C$ such that
\begin{equation}\label{norm_varphi_h21}
\|\varphi\|_{H^{2,1}(Q)}\leq C\left(\|\bar \theta\|_{L^2(Q)}+\|\varphi^T\|_{H_{0}^{1}(\Omega)} \right).
\end{equation}
Thanks to the regularity of $\varphi$, we have from \cite[Th. 2.1, p.9]{lions_magenes} that 
\begin{equation*}
\frac{\partial\varphi}{\partial n}\in H^{\frac12,\frac14}(\Sigma),
\end{equation*}
and, moreover,  $\varphi \mapsto \frac{\partial \varphi }{\partial n}$ is a continuous and linear map of $H^{\frac12,\frac14}(\Sigma)\to H^{2,1}(Q)$. Thus, we get
\begin{equation}\label{est_phi_h1214}
\|\tfrac{\partial \varphi}{\partial n}\csbd\|_{H^{\frac12,\frac14}(\Sigma)}\leq C\left(\|\bar \theta\|_{L^2(Q)}+\|\varphi^T\|_{H_{0}^{1}(\Omega)} \right)
\end{equation}
where we have used the fact that $\Gamma\subset \partial \Omega$ and $\csbd$ is a smooth function. 

On the other hand, for $\varphi$ solution to \eqref{heat_fixed}, let us consider the system
\begin{equation}\label{theta_fixed}
\begin{cases}
\theta_t-\Delta \theta=\frac{1}{\gamma^2}\varphi, & \text{in Q}, \\
\theta=\frac{1}{\ell^2}\frac{\partial \varphi}{\partial n}\csbd &\text{on } \Sigma, \\
\theta(x,0)=0, & \text{in } \Omega.
\end{cases}
\end{equation}
From the results in \cite[Ch. 4, \S15.5]{lions_magenes} and estimates \eqref{norm_varphi_h21}--\eqref{est_phi_h1214}, we deduce the existence of $C>0$ such that
\begin{align}\notag
\|\theta\|_{W(Q)}&\leq C\left(\frac{1}{\gamma^2}\|\varphi\|_{L^2(Q)}+\frac{1}{\ell^2}\|\tfrac{\partial \varphi}{\partial n}\csbd \|_{H^{\frac12,\frac14}(\Sigma)}\right) \\ \label{est_theta_fixed}
&\leq \frac{C}{\mu}\left(\|\bar\theta\|_{L^2(Q)}+\|\varphi^T\|_{H^{1}_0(\Omega)}\right),
\end{align}
where we have denoted $\mu=\min\{\ell^2,\gamma^2\}$.

Now, consider the mapping defined by $\Lambda\bar\theta=\theta$ where $(\varphi,\theta)$ is the solution to the cascade system composed by \eqref{heat_fixed} and \eqref{theta_fixed}. From the linearity of the systems, estimate \eqref{est_theta_fixed} and the embedding $W(Q)\hookrightarrow L^2(Q)$ (which follows from classical compactness results, see, e.g., \cite{simon}) we can check that $\Lambda$ is a well-defined, continuous and linear map of $L^2(Q)\to L^2(Q)$.

For any $\bar\theta_1,\bar\theta_2\in L^2(Q)$, we consider the difference $\Lambda\bar\theta_1-\Lambda\bar\theta_2$. Then, arguing as above, we can readily see that
\begin{equation*}
\|\Lambda \bar\theta_1-\Lambda \bar\theta_2\|_{L^2(Q)}\leq \frac{C}{\mu}\left(\|\bar\theta_1-\bar\theta_2\|_{L^2(Q)}\right).
\end{equation*}
For $\gamma,\ell$ large enough, the map $\Lambda$ is a contraction and therefore possesess a unique fixed point. The proof is complete. 
\end{proof}

\subsubsection*{The Carleman estimate}

Now, we are in position to prove one of the main results of this section. We begin by introducing several weight functions that will be useful in the remainder of this section. By hypothesis $\omega\cap\mathcal O_d\neq \emptyset$, then there exists a nonempty open set $\omega_0\subset\subset\omega\cap\mathcal O_d$. Let $\eta^0\in C^2(\overline\Omega)$ be a function verifying
\begin{equation}\label{constr_1}
\eta^0(x)>0 \ \text{in} \ \Omega, \quad \eta^0=0 \ \text{on} \ \Gamma, \quad |\nabla\eta^0|>0 \ \text{in} \ \overline{\Omega}\backslash\omega_0.
\end{equation}

The existence of such function is given in \cite{fursi}. Let $l\in C^\infty([0,T])$ be a positive function in $(0,T)$ satisfying 
\begin{equation*}
\begin{split}
&l(t)=t \quad \text{if } t\in [0,T/4], \quad l(t)=T-t \quad \text{if } t\in [3T/4,T], \\
&l(t)\leq l(T/2), \quad \text{for all } t\in[0,T]. 
\end{split}
\end{equation*}
Then, for all $\lambda\geq 1$ and $m\geq 2$, we consider the following weight functions:
\begin{equation}\label{weights_l} 
\begin{split}
&\alpha(x,t)= \frac{e^{2\lambda\|\eta^0\|_\infty}-e^{\lambda\eta^0(x)}}{l^m(t)}, \quad \xi(x,t)=\frac{e^{\lambda \eta^0(x)}}{l^m(t)} \\
&\alpha^*(t)=\max_{x\in\overline{\Omega}}\alpha(x,t), \quad \xi^*(t)=\min_{x\in\overline\Omega} \xi(x,t).
\end{split}
\end{equation}
The following notation will be used to abridge the estimates
\begin{equation*} 
\begin{gathered}
I(s;u):=s^{-1}\iint_{Q}e^{-2s\alpha}\xi^{-1}(|u_t|^2+|\Delta u|^2)\dx\dt+s\iint_Qe^{-2s\alpha}\xi|\nabla u|^2\dx\dt+s^3\iint_Qe^{-2s\alpha}\xi^3|u|^2\dx\dt \\
\widetilde I(s;u):=s^{-1}\iint_{Q}e^{-2s\alpha}\xi^{-1}|\nabla u|^2\dx\dt+s\iint_{Q}e^{-2s\alpha}\xi |u|^2\dx\dt,
\end{gathered}
\end{equation*}
for some parameter $s>0$. 

We state a Carleman estimate, due to \cite{ima_yama_boundary}, which holds for the solutions of heat equations with non-homogeneous Dirichlet boundary conditions.
\begin{lemma}
Let us assume $u_0\in L^2(\Omega)$, $f\in L^2(Q)$ and $g\in H^{\frac{1}{2},\frac{1}{4}}(\Sigma)$. Then, there exists a constant $\lambda_0$, such that for any $\lambda\geq \lambda_0$ there exist constants $C>0$ independent of $s$ and $s_0(\lambda)>0$, such that the solution $y\in W(Q)$ of 
\begin{equation*}
\begin{cases}
u_t-\Delta u=f \quad& \textnormal{in } Q, \\
u=g \quad& \textnormal{on } \Sigma, \\
u(0)=u_0 \quad& \textnormal{in } \Omega, 
\end{cases}
\end{equation*}
satisfies
\begin{equation}\label{car_boundary}
\begin{split}
\widetilde I(s;u)\leq C&\left(s^{-\frac{1}{2}}\|e^{-s\alpha}\xi^{-\frac{1}{4}}g \|_{H^{\frac{1}{2},\frac{1}{4}}(\Sigma)}^2+s^{-\frac{1}{2}}\|e^{-s\alpha}\xi^{-\frac{1}{4}+\frac{1}{m}}g\|^2_{L^2(\Sigma)}\phantom{\iint_{\omega\times(0,T)}}\right. \\
&\left.\quad+s^{-2}\iint_Qe^{-2s\alpha}\xi^{-2}|f|^2\dx\dt+s\iint_{\omega_0\times(0,T)}e^{-2s\alpha}\xi|u|^2\dx\dt\right)
\end{split}
\end{equation}
for every $s\geq s_0(\lambda)$. 
\end{lemma}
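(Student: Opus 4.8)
The estimate \eqref{car_boundary} is quoted essentially verbatim from \cite{ima_yama_boundary}, so the only point that genuinely needs checking for our setting is that the weight functions \eqref{weights_l}---in particular the choice $m\ge 2$ in the time factor $l^m(t)$, which is stronger than the classical $m=1$---fall within the framework of that reference. The natural way to prove an inequality of this type, and the route followed there, is to reduce to the case of \emph{homogeneous} Dirichlet data by a lifting argument and then to invoke the Fursikov--Imanuvilov global Carleman estimate. The correct building block is the low-order variant (a by-now classical consequence of the estimates in \cite{fursi}, see also \cite{cara_guerrero}) adapted to right-hand sides in $L^2(0,T;H^{-1}(\Omega))$, whose left-hand side is precisely $\widetilde I(s;\cdot)$: if $v$ solves $v_t-\Delta v=f_0+\mathrm{div}\,F$ in $Q$, $v=0$ on $\Sigma$, $v(0)=v_0\in L^2(\Omega)$, then for $\lambda$ large and $s\ge s_0(\lambda)$
\[
\widetilde I(s;v)\le C\left(s^{-2}\iint_Q e^{-2s\alpha}\xi^{-2}|f_0|^2\dx\dt+\iint_Q e^{-2s\alpha}|F|^2\dx\dt+s\iint_{\omega_0\times(0,T)}e^{-2s\alpha}\xi|v|^2\dx\dt\right).
\]

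The main step is the construction of a suitable lifting of the boundary datum. I would fix a bounded extension operator $\mathcal E:H^{\frac12,\frac14}(\Sigma)\to H^{1,\frac12}(Q)$ and set $G=\mathcal E g$, with $G$ supported in a neighborhood of $\Sigma$ (so that $G\equiv 0$ on a fixed $\omega_0\subset\subset\omega\cap\mathcal O_d$ chosen away from $\Gamma$), and then carry out the estimates \emph{with the Carleman weights attached}: the delicate point is that $\mathcal E$ must be chosen, or the weighted norms of $G$ estimated, so that $e^{-s\alpha}\xi^{-\frac12}G$, $e^{-s\alpha}\xi^{\frac12}\nabla G$ and $e^{-s\alpha}\xi^{-1}G_t$ are all controlled by $s^{-\frac12}\|e^{-s\alpha}\xi^{-\frac14}g\|_{H^{\frac12,\frac14}(\Sigma)}^2+s^{-\frac12}\|e^{-s\alpha}\xi^{-\frac14+\frac1m}g\|_{L^2(\Sigma)}^2$, the extra $\xi^{1/m}$ factor in the second term being exactly what records the loss incurred when the time derivative falls on the singular weight. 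Writing $v=u-G$, the function $v$ has homogeneous Dirichlet data, solves $v_t-\Delta v=f-G_t+\mathrm{div}(\nabla G)$ and has initial datum $u_0-G(0)\in L^2(\Omega)$.

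I would then apply the low-order interior estimate to $v$ with $f_0=f-G_t$ and $F=\nabla G$, and undo the substitution $u=v+G$ via the triangle inequality together with the weighted bounds on $G$ from the previous paragraph. The contribution of $f$ yields the term $s^{-2}\iint_Q e^{-2s\alpha}\xi^{-2}|f|^2$; the contributions of $G_t$ and $\nabla G$ yield the two boundary terms in \eqref{car_boundary}; and, since $G\equiv 0$ on $\omega_0$, the localized term $s\iint_{\omega_0}e^{-2s\alpha}\xi|v|^2$ coincides with $s\iint_{\omega_0}e^{-2s\alpha}\xi|u|^2$ (and $\omega_0\subset\omega$ gives the observation on $\omega$). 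Choosing $\lambda=\lambda_0$ large enough absorbs the lower-order remainders produced when derivatives of $\alpha,\xi$ hit $G$, and finally a density/regularization argument (first for smooth data, then passing to the limit using the energy estimate for the non-homogeneous problem, cf.\ Proposition \ref{prof_regularity}) extends the inequality to $u_0\in L^2(\Omega)$, $f\in L^2(Q)$, $g\in H^{\frac12,\frac14}(\Sigma)$ and $u\in W(Q)$.

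The hardest part will be the weighted lifting of the second paragraph: making the constants from the trace and extension theorems interact correctly with the singular weights $e^{-s\alpha}$ and $\xi$ near $\Sigma$ is precisely what produces the exact powers of $s$ and $\xi$ in the two boundary terms of \eqref{car_boundary}, and it is the technical core of \cite{ima_yama_boundary}; the requirement $m\ge 2$ enters here, as it is what makes those boundary weights integrable in time. A structural limitation to keep in mind is that non-homogeneous Dirichlet data give no control on the normal derivative of $u$ on $\Sigma$, so one can only recover the reduced left-hand side $\widetilde I(s;u)$ and not the full $I(s;u)$; an alternative, more hands-on derivation would conjugate $w=e^{-s\alpha}u$, expand $\|e^{-s\alpha}(u_t-\Delta u)\|_{L^2(Q)}^2$, integrate by parts keeping all boundary terms, use $w|_\Sigma=e^{-s\alpha}g$ together with trace inequalities to estimate the boundary integrals (which cannot be eliminated, again forcing the reduced form), and close with a local energy estimate near $\Sigma$.
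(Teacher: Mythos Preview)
The paper does not actually prove this lemma: it is stated as a known result taken directly from \cite{ima_yama_boundary}, with no argument given beyond the attribution. Your proposal is therefore already more than the paper provides, and the lifting-plus-low-order-Carleman strategy you outline is indeed the route of \cite{ima_yama_boundary}; your identification of the weighted extension as the technical core, of the role of $m\ge 2$, and of why only $\widetilde I(s;u)$ (not $I(s;u)$) survives are all accurate.
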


The second result we need is the classical Carleman estimate for the linear heat equation (see, for instance, \cite{fursi,cara_guerrero}).
\begin{lemma}\label{2once}
Let us assume $u^T\in H_0^1(\Omega)$ and $f\in L^2(Q)$. Then, there exists a constant $\lambda_1$, such that for any $\lambda\geq \lambda_1$ there exist constants $C>0$ independent of $s$ and $s_1(\lambda)>0$, such that the solution of 
\begin{equation}\label{sys_u}
\begin{cases}
u_t+\Delta u=f \quad& \textnormal{in } Q, \\
u=0 \quad& \textnormal{on } \Sigma, \\
u(T)=u^T(x) \quad& \textnormal{in } \Omega,
\end{cases}
\end{equation}
satisfies
\begin{equation}\label{car_clasica}
\begin{split}
I(s;u)\leq C&\left(\iint_Qe^{-2s\alpha}|f|^2\dx\dt+s^3\iint_{\omega_0\times(0,T)}e^{-2s\alpha}\xi^3|u|^2\dx\dt\right)
\end{split}
\end{equation}
for every $s\geq C$. 
\end{lemma}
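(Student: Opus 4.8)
The plan is to prove \eqref{car_clasica} by the classical conjugated-operator method of Fursikov and Imanuvilov, so in the write-up I would mainly exhibit the structure of the argument and refer to \cite{fursi,cara_guerrero} for the detailed bookkeeping of constants. First I would reverse time via $\tau=T-t$, which turns \eqref{sys_u} into a standard forward heat equation with datum $u^T$ at $\tau=0$ and source $-f$; since $l(t)$ is symmetric about $t=T/2$, the weights $\alpha,\xi$ in \eqref{weights_l} are invariant under this change and the estimate keeps exactly the same form, so there is no loss in working with the forward operator.

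The core computation is the conjugation $w:=e^{-s\alpha}u$. Because $u=0$ on $\Sigma$ we have $w=0$ on $\Sigma$, and because $\alpha$ blows up as $t\to 0,T$ the function $w$ together with its derivatives vanishes at $t=0$ and $t=T$, which kills all temporal boundary terms in the integrations by parts. Writing $e^{-s\alpha}(\partial_t+\Delta)(e^{s\alpha}w)=P_1 w+P_2 w+Rw$, I would gather in $P_1$ the formally self-adjoint part (containing $\Delta w$ and the zeroth-order term $s^2\lambda^2\xi^2|\nabla\eta^0|^2 w$) and in $P_2$ the skew-adjoint part (containing $\partial_t w$ and $2s\lambda\xi\,\nabla\eta^0\cdot\nabla w$), leaving lower-order contributions in $Rw$. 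Taking $L^2(Q)$ norms of the identity $P_1 w+P_2 w=e^{-s\alpha}f-Rw$ reduces everything to controlling the cross term $2(P_1 w,P_2 w)_{L^2(Q)}$, which I would expand term by term by integrating by parts in $x$ and $t$.

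The heart of the matter is that, after these integrations by parts, the dominant contributions of $2(P_1 w,P_2 w)$ are positive and reproduce $s\lambda^2\iint_Q\xi|\nabla\eta^0|^2|\nabla w|^2\dx\dt+s^3\lambda^4\iint_Q\xi^3|\nabla\eta^0|^4|w|^2\dx\dt$, up to remainders that are of lower order in $s$ or $\lambda$ and hence absorbable once $\lambda\geq\lambda_1$ and then $s\geq s_1(\lambda)$ are fixed large (this also absorbs $Rw$ and sends the contribution of $e^{-s\alpha}f$ to the right-hand side). The construction of $\eta^0$ in \eqref{constr_1} enters decisively: since $|\nabla\eta^0|>0$ on $\overline\Omega\setminus\omega_0$, these two terms control the full interior integrals $s\iint_Q\xi|\nabla w|^2$ and $s^3\iint_Q\xi^3|w|^2$ away from $\omega_0$, while on $\omega_0$ the missing gradient term is recovered by the usual local device: choosing a cutoff $\zeta\in C_c^\infty(\omega)$ with $\zeta\equiv 1$ on $\omega_0$ and integrating $s\iint\zeta\,\xi|\nabla w|^2$ by parts to bound it by $s^3\iint_{\omega\times(0,T)}\xi^3|w|^2\dx\dt$ plus already-controlled quantities.

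I expect the $\Sigma$-boundary integrals to be the main technical obstacle: they come from the spatial integrations by parts, and using $w=0$ on $\Sigma$ (so only the normal derivative of $w$ survives) together with the sign of $\partial_n\eta^0$ built into $\eta^0$, they carry a favorable sign and are simply discarded. Assembling the surviving positive terms gives, in the $w$ variable, $s\iint_Q\xi|\nabla w|^2\dx\dt+s^3\iint_Q\xi^3|w|^2\dx\dt\lesssim\iint_Qe^{-2s\alpha}|f|^2\dx\dt+s^3\iint_{\omega\times(0,T)}\xi^3|w|^2\dx\dt$; the second-order piece $s^{-1}\iint_Q\xi^{-1}(|w_t|^2+|\Delta w|^2)\dx\dt$ appearing in $I(s;u)$ is then added back by using $P_1 w+P_2 w=e^{-s\alpha}f-Rw$ to estimate $w_t$ and $\Delta w$ against the quantities already bounded. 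Finally I would undo the substitution $w=e^{-s\alpha}u$, which restores the weights $e^{-2s\alpha}$ and the correct powers of $\xi$ and reproduces $I(s;u)$ precisely as in \eqref{car_clasica}, with the observation localized on $\omega_0\subset\subset\omega$.
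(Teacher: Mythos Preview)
Your proposal is correct and follows exactly the classical Fursikov--Imanuvilov conjugated-operator scheme that the paper invokes: the paper does not give its own proof of this lemma but simply cites \cite{fursi,cara_guerrero} and remarks that replacing $t(T-t)$ by the modified $l(t)$ in \eqref{weights_l} is harmless, which is precisely the route you sketch. One minor caveat: the function $l$ as defined in the paper is not asserted to be symmetric about $T/2$, so your time-reversal shortcut is not quite justified as written; this is inessential, since the same integration-by-parts computation works directly for the backward operator $\partial_t+\Delta$ without any change of variables.
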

\begin{remark}
In \cite{fursi,cara_guerrero}, the authors use the function $l(t)=t(T-t)$ to prove the above lemma. Since we propose $l(t)$ with the important property of going to $0$ as $t$ tends to $0$ and $T$, the proof of lemma \ref{2once} does not change if we use the weights in \eqref{weights_l}.
\end{remark}

Thanks to Lemma \ref{prof_regularity} and using the above estimates, we can prove a Carleman inequality for the solutions of the adjoint system \eqref{adj_sys_foll}. This will be the main ingredient to prove the observability inequality \eqref{obs_ineq_1}. The result is the following.
\begin{proposition}\label{prop_carleman}
Assume that $\omega\cap\mathcal O_d\neq \emptyset$ and $\gamma,\ell$ are large enough. Then, there exists a constant $C$ such that for any $\varphi^T\in H^1_0(\Omega)$, the solution $(\varphi,\theta)$ to \eqref{adj_sys_foll} satisfies
\begin{equation}\label{car_final}
\begin{split}
I(s;\varphi)+\widetilde I(s;\theta) \leq C\left(s^5\iint_{\omega\times(0,T)}e^{-2s\alpha}\xi^5|\varphi|^2\dx\dt\right),
\end{split}
\end{equation}
for every $s>0$ large enough.
\end{proposition}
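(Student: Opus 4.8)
The plan is to combine the two Carleman estimates stated above with energy-type estimates to couple the two equations, following the standard strategy for cascade systems but with the extra care required by the boundary term. I would begin by applying Lemma \ref{2once} to the equation $-\varphi_t-\Delta \varphi=\theta\chi_{\mathcal{O}_d}$ (which, after the time reversal $t\mapsto T-t$, has the form \eqref{sys_u}) to obtain
\begin{equation*}
I(s;\varphi)\leq C\left(\iint_Q e^{-2s\alpha}\xi^0|\theta|^2\chi_{\mathcal{O}_d}\dx\dt+s^3\iint_{\omega_0\times(0,T)}e^{-2s\alpha}\xi^3|\varphi|^2\dx\dt\right).
\end{equation*}
Next I would apply the boundary Carleman estimate \eqref{car_boundary} to the equation $\theta_t-\Delta\theta=\frac{1}{\gamma^2}\varphi$ with boundary data $g=\frac{1}{\ell^2}\frac{\partial\varphi}{\partial n}\csbd$ and zero initial datum, yielding
\begin{equation*}
\widetilde I(s;\theta)\leq C\left(\frac{s^{-1/2}}{\ell^4}\left(\|e^{-s\alpha}\xi^{-1/4}\tfrac{\partial\varphi}{\partial n}\csbd\|^2_{H^{1/2,1/4}(\Sigma)}+\|e^{-s\alpha}\xi^{-1/4+1/m}\tfrac{\partial\varphi}{\partial n}\csbd\|^2_{L^2(\Sigma)}\right)+\frac{s^{-2}}{\gamma^4}\iint_Q e^{-2s\alpha}\xi^{-2}|\varphi|^2\dx\dt+s\iint_{\omega_0\times(0,T)}e^{-2s\alpha}\xi|\theta|^2\dx\dt\right).
\end{equation*}

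The key coupling step is then to absorb the right-hand side terms. The term $\frac{s^{-2}}{\gamma^4}\iint_Q e^{-2s\alpha}\xi^{-2}|\varphi|^2$ is clearly absorbed by $I(s;\varphi)$ on the left, provided $s$ is large (the power of $\xi$ is $-2<3$). Conversely, the term $\iint_Q e^{-2s\alpha}|\theta|^2\chi_{\mathcal{O}_d}$ coming from the first estimate must be absorbed by $\widetilde I(s;\theta)$: since $\widetilde I(s;\theta)$ contains $s\iint_Q e^{-2s\alpha}\xi|\theta|^2$, and since on a bounded domain $\xi\geq \xi^*(t)$ stays bounded below away from $t=0,T$ but the weight $e^{-2s\alpha}$ decays, the absorption works after multiplying the $\theta$-Carleman estimate by a suitable large constant and using that $\gamma,\ell$ are large so that the feedback terms have small coefficients. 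The boundary term is the crucial obstacle: I would control $\|e^{-s\alpha}\xi^{-1/4}\tfrac{\partial\varphi}{\partial n}\csbd\|^2_{H^{1/2,1/4}(\Sigma)}$ by a local-in-space interior Carleman quantity for $\varphi$ using the trace/regularity estimate from \cite{lions_magenes} (as in Proposition \ref{prof_regularity}) together with the fact, encoded in \eqref{constr_1}, that $\eta^0=0$ on $\Gamma$, so that $\alpha$ and $\xi$ essentially coincide on $\Gamma$ with their extremal values $\alpha^*,\xi^*$; this lets one pass from a weighted boundary norm of $\partial_n\varphi$ to a weighted interior norm $\widetilde I(s;\varphi)$ or a piece of $I(s;\varphi)$, again with a factor $s^{-1/2}$ that is small for large $s$. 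The precise bookkeeping of the powers of $s$ and $\xi$ in this trace estimate — matching the $H^{1/2,1/4}$ weighted norm on $\Sigma$ to the bulk Carleman integrals — is the delicate part.

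Once all the global terms are absorbed, I am left with
\begin{equation*}
I(s;\varphi)+\widetilde I(s;\theta)\leq C\left(s^3\iint_{\omega_0\times(0,T)}e^{-2s\alpha}\xi^3|\varphi|^2\dx\dt+s\iint_{\omega_0\times(0,T)}e^{-2s\alpha}\xi|\theta|^2\dx\dt\right),
\end{equation*}
so it remains to eliminate the local term in $\theta$. Here I use the hypothesis $\omega_0\subset\subset\omega\cap\mathcal{O}_d$: on $\omega_0$ one has $\chi_{\mathcal{O}_d}=1$, so from the first equation $\theta=-\varphi_t-\Delta\varphi$ there; introducing a cutoff $\zeta\in C_c^\infty(\omega)$ with $\zeta\equiv 1$ on $\omega_0$ and $0\leq\zeta\leq 1$, I write $s\iint_{\omega_0}e^{-2s\alpha}\xi|\theta|^2\leq s\iint_\omega \zeta e^{-2s\alpha}\xi\,\theta(-\varphi_t-\Delta\varphi)$, then integrate by parts in $x$ and $t$ to move derivatives off $\varphi$. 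Each resulting term is estimated by Young's inequality: one part is absorbed into $\widetilde I(s;\theta)$ (paying a small constant), and the remaining parts are local integrals of $|\varphi|^2$ against powers of $s$ and $\xi$, which can be bounded by $Cs^5\iint_{\omega\times(0,T)}e^{-2s\alpha}\xi^5|\varphi|^2$ — the power $5$ arising from the derivatives of the weights $e^{-2s\alpha}\xi$ in $t$ and $x$ (each differentiation of $\alpha$ costs a factor $s l^{-m-1}\sim s\xi^{1+1/m}$, and one checks $5$ is enough to dominate). This yields \eqref{car_final}. The main obstacle, as indicated, is the rigorous treatment of the boundary observation term $\partial_n\varphi|_\Sigma$ in the weighted trace space $H^{1/2,1/4}(\Sigma)$ and showing it is dominated by interior Carleman quantities; everything else is a careful but routine absorption argument exploiting the largeness of $\gamma$ and $\ell$ and of the Carleman parameter $s$.
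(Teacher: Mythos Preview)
Your outline is correct and matches the paper's proof: combine the two Carleman estimates, absorb the bulk cross terms, bound the weighted boundary traces of $\partial_n\varphi$ by interior quantities, and finally eliminate the local $\theta$-observation via the coupling $\theta=-\varphi_t-\Delta\varphi$ on $\omega_0\subset\mathcal O_d$ together with integration by parts and Young's inequality. For the boundary step you flag as delicate, the paper's concrete mechanism is to set $\widehat\varphi=e^{-s\alpha^*}(s\xi^*)^{a}\varphi$ (for suitable $a$) and invoke parabolic $H^{2,1}$-regularity for $\widehat\varphi$ combined with the trace map $H^{2,1}(Q)\to H^{1/2,1/4}(\Sigma)$; note that this regularity estimate reintroduces a global weighted $\theta$-term (coming from the source $\theta\chi_{\mathcal O_d}$ in the $\varphi$-equation), not only $\varphi$-terms as you write, and both are then absorbed into $I(s;\varphi)+\widetilde I(s;\theta)$ thanks to the negative powers of $s\xi^*$ involved.
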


\begin{proof}
We start by applying inequality \eqref{car_clasica} to the first equation in \eqref{adj_sys_foll} and inequality \eqref{car_boundary} to the second one. We take $\hat \lambda=\max\{\lambda_0,\lambda_1\}$ and fix $\lambda\geq \hat \lambda$. Adding them up, we obtain
\begin{equation*}
\begin{split}
I(s;\varphi)+\widetilde{I}(s;\theta)\leq C&\left(\iint_{\omega_0\times(0,T)}e^{-2s\alpha}\left(s^3\xi^3|\varphi|^2+s\xi|\theta|^2\right)\dx\dt +\iint_Q e^{-2s\alpha}\left(|\theta\chi_{\mathcal O_d}|^2+s^{-2}\xi^{-2}|\tfrac{1}{\gamma^2}\varphi|^2\right)\dx\dt\right. \\
&\quad\left. +s^{-\frac{1}{2}}\left\|e^{-s\alpha}\xi^{-\frac{1}{4}}\frac{1}{\ell^2}\frac{\partial \varphi}{\partial n}\csbd \right\|^2_{H^{\frac{1}{4},\frac{1}{2}}(\Sigma)}+s^{-\frac{1}{2}}\left\| e^{-2s\alpha}\xi^{-\frac{1}{4}+\frac{1}{m}} \frac{1}{\ell^2}\frac{\partial \varphi}{\partial n} \csbd \right\|^{2}_{L^2(\Sigma)}  \right),
\end{split}
\end{equation*}
for all $s$ large enough and some $m\geq 2$ to be fixed later. We can use the parameter $s$ to absorb the lower order terms into the left-hand side in the previous inequality. More precisely, we have 
\begin{equation}\label{car_con_bound}
\begin{split}
I(s;\varphi)+\widetilde{I}(s;\theta)\leq C&\left(\iint_{\omega_0\times(0,T)}e^{-2s\alpha}\left(s^3\xi^3|\varphi|^2+s\xi|\theta|^2\right)\dx\dt + s^{-\frac{1}{2}}\left\| e^{-2s\alpha}\xi^{-\frac{1}{4}+\frac{1}{m}} \frac{1}{\ell^2}\frac{\partial \varphi}{\partial n} \csbd \right\|^{2}_{L^2(\Sigma)} \right. \\
&\quad\left. +s^{-\frac{1}{2}}\left\|e^{-s\alpha}\xi^{-\frac{1}{4}}\frac{1}{\ell^2}\frac{\partial \varphi}{\partial n}\csbd \right\|^2_{H^{\frac{1}{4},\frac{1}{2}}(\Sigma)}  \right),
\end{split}
\end{equation}
for all $s\geq C$, with $C>0$ only depending on $\Omega$, $\omega$, $\mathcal O_d$ and $\lambda$. 

Then, we proceed to estimate the first boundary term in \eqref{car_con_bound}. Arguing as in \cite{duprez_lissy}, we take a function $\kappa\in C^2(\overline \Omega)$ such that
\begin{equation*}
\frac{\partial \kappa}{\partial \nu}=1 \quad\text{and}\quad \kappa=1 \quad \text{on } \partial \Omega.
\end{equation*}
Note that from the definition of the weight functions \eqref{weights_l}, we have $\alpha$ and $\alpha^*$ are equal in $\partial \Omega$, thus we may write
\begin{equation*}
\begin{split}
\iint_{\Sigma}e^{-2s\alpha}\left|\frac{\partial \varphi}{\partial n}\right|^2\d\sigma\dt&=\int_{0}^{T}e^{-2s\alpha^*}\int_{\partial \Omega}\nabla\varphi\cdot \nabla \kappa\frac{\partial \varphi}{\partial n} \d\sigma \dt\\
&=\int_{0}^{T}e^{-2s\alpha^*}\int_{\Omega}\Delta \varphi\,(\nabla\varphi\cdot \nabla \kappa)\dx\dt+\int_{0}^Te^{-2s\alpha^*}\int_\Omega\nabla (\nabla\varphi\cdot\nabla\kappa)\cdot\nabla \varphi \dx\dt,
\end{split}
\end{equation*}
where we have integrated by parts in the right-hand side. Using Cauchy-Schwarz and Young inequalities, we get
\begin{equation}\label{estimate_1}
\begin{split}
\iint_{\Sigma}&e^{-2s\alpha}\left|\frac{\partial \varphi}{\partial n}\right|^2\d\sigma \dt\\
&\leq C\left(\iint_Q e^{-2s\alpha^*}\left((s\xi)^{-1}|\Delta \varphi|^2 + s\xi|\nabla \varphi|^2\right)\dx\dt+\int_{0}^{T}e^{-2s\alpha^*}(s\xi^*)^{-1}\|\varphi\|^2_{H^2(\Omega)}\dt\right).
\end{split}
\end{equation}

We proceed to estimate the last term in the above inequality. Let us set $\widehat \varphi=\sigma\varphi$ with $\sigma\in C^\infty([0,T])$ defined as
\begin{equation}\label{rho_def}
\sigma:=e^{-s\alpha^*}(s\xi^*)^a
\end{equation}
for some $a\in \mathbb R$ to be chosen later. Observe that $\sigma(T)=0$. Then, $\widehat{\varphi}$ is solution to the system
\begin{equation}\label{phi_gorro}
\begin{cases}
-\widehat\varphi_t-\Delta \widehat\varphi=\sigma\,\theta\chi_{\mathcal O_d}-\sigma_t\varphi \quad& \text{in } Q, \\
\widehat\varphi=0 \quad& \text{on } \Sigma, \\
\widehat{\varphi}(\cdot,T)=0 \quad&\text{in } \Omega.
\end{cases}
\end{equation}

From standard regularity estimates for the heat equation, we have that $\widehat{\varphi}$ solution to \eqref{phi_gorro} satisfies
\begin{equation}\label{est_regul}
\|\widehat{\varphi}\|_{H^{2,1}(Q)}\leq C\left(\|\sigma\,\theta\|_{L^2(Q)}+\|\sigma_t\varphi\|_{L^2(Q)}\right).
\end{equation}
Using the definitions of $\alpha^*$ and $\xi^*$ given in \eqref{weights_l} together with \eqref{rho_def}, it is not difficult to see that 
\begin{equation*}
|\sigma_t|\leq Ce^{-s\alpha^*}(s\xi^*)^{a+{(m+1)}/{m}},
\end{equation*}
for all $s$ large enough. Employing  \eqref{est_regul} with $\sigma=e^{-s\alpha^*}(s\xi^*)^{-1/2}$, we obtain
\begin{equation}\label{estimate_2}
\begin{split}
\int_{0}^T e^{-2s\alpha^*}&(s\xi^{*})^{-1}\|\varphi\|_{H^2(\Omega)}^2\dt \\
&\leq C\left(\iint_Qe^{-2s\alpha^*}(s\xi^*)^{-1}|\theta|^2\dx\dt+\iint_Qe^{-2s\alpha^*}(s\xi^*)^{-1+2(m+1)/m}|\varphi|^2\dx\dt\right).
\end{split}
\end{equation}

Setting $m=4$ and combining estimates \eqref{estimate_1} and \eqref{estimate_2}, together with the definitions of the weights $\alpha^*$ and $\xi^*$ (see \eqref{weights_l}), yield
\begin{equation*}
\begin{split}
\iint_{\Sigma}e^{-2s\alpha}&\left|\frac{\partial \varphi}{\partial n}\right|^2\d\sigma\dt\\
&\leq C\left(\iint_Q e^{-2s\alpha}\left((s\xi)^{-1}|\Delta \varphi|^2 + s\xi|\nabla \varphi|^2\right)\dx\dt+\iint_Qe^{-2s\alpha}\left(s\xi|\theta|^2+(s\xi)^{3/2}|\varphi|^2\right)\dx\dt\right).
\end{split}
\end{equation*}
Thus, $\|e^{-s\alpha}\frac{\partial \varphi}{\partial \nu}\|_{L^2(\Sigma)}^2$ is bounded by the left-hand side of \eqref{car_con_bound} and, by taking $s$ large enough, we can now absorb the boundary term 
\begin{equation*}
s^{-\frac{1}{2}}\left\| e^{-2s\alpha} \frac{1}{\ell^2}\frac{\partial \varphi}{\partial n} \csbd \right\|^{2}_{L^2(\Sigma)}.
\end{equation*}

To deal with the second boundary term in \eqref{car_con_bound}, we can use the facts that $\varphi \mapsto \frac{\partial \varphi }{\partial n}$ is a continuous and linear map of $H^{\frac12,\frac14}(\Sigma)\to H^{2,1}(Q)$ and $\csbd$ is a smooth function. More precisely. 

\begin{equation}\label{est_trace}
\begin{split}
s^{-\frac{1}{2}}\left\|e^{-s\alpha}\xi^{-\frac{1}{4}} \frac{1}{\ell^2} \frac{\partial \varphi}{\partial n}\csbd \right\|^2_{H^{\frac{1}{2},\frac{1}{4}}(\Sigma)} & \leq s^{-\frac{1}{2}}\left\|e^{-s\alpha^*}(\xi^*)^{-\frac{1}{4}} \frac{\partial \varphi}{\partial n} \right\|^2_{H^{\frac{1}{2},\frac{1}{4}}(\Sigma)} \\
&\leq s^{-\frac{1}{2}}\left\| e^{-s\alpha^*}(\xi^*)^{-\frac{1}{4}} \varphi \right\|_{H^{2,1}(Q)},
\end{split}
\end{equation}
where we have used again that $\alpha^*,\xi^*$ coincide with $\alpha,\xi$ on the boundary. 

Now, set $\widehat \varphi=\sigma \varphi$ with $\sigma=e^{-s\alpha^*}(s\xi)^{-\frac{1}{4}}$. Arguing as before, we readily obtain from \eqref{est_regul} the following 
\begin{equation}\label{est_frontera}
s^{-\frac{1}{2}}\left\| e^{-s\alpha^*}(\xi^*)^{-\frac{1}{4}} \varphi \right\|_{H^{2,1}(Q)}\leq C\left(\iint_{Q}e^{-2s\alpha^*}(s\xi^*)^{-\frac{1}{2}}|\theta|^2\dx\dt+\iint_{Q}e^{-2s\alpha^*}(s\xi^*)^2|\varphi|^2\dx\dt\right).
\end{equation}
Putting together \eqref{est_trace} and \eqref{est_frontera} and substituting in \eqref{car_con_bound}, we can take $s$ sufficiently large to absorb the remaining terms. 

Up to now, we have 
\begin{equation}\label{est_locales}
\begin{split}
I(s;\varphi)+\widetilde{I}(s;\theta)\leq C&\left(\iint_{\omega_0\times(0,T)}e^{-2s\alpha}\left(s^3\xi^3|\varphi|^2+s\xi|\theta|^2\right)\dx\dt \right)
\end{split}
\end{equation}
for all $s$ large enough. The last step is to eliminate the local term corresponding to $\theta$. To this end, consider an open set $\omega_1$ such that $\omega_0\subset\subset\omega_1\subset \subset \omega\cap \mathcal O_d$ and a function $\zeta\in C_0^2(\omega_1)$ verifying 
\begin{equation}\label{cut_off}
\zeta\equiv 1\quad \text{in } \omega_0 \quad \text{and} \quad 0\leq \zeta\leq 1. 
\end{equation}
Thanks to the hypothesis $\omega\cap\mathcal O_d\neq \emptyset$, we can use the first equation of \eqref{adj_sys_foll} and \eqref{cut_off} to obtain
\begin{equation}\label{car_inter}
\iint_{\omega_0\times(0,T)}e^{-2s\alpha}s\xi|\theta|^2\dx\dt\leq \iint_{\omega_1\times(0,T)}e^{-2s\alpha}s\xi \theta(-\varphi_t-\Delta \varphi)\zeta\dx\dt.
\end{equation}
Following well-known arguments (see, e.g. \cite{luz_manuel,deteresa2000}), we integrate by parts in time and space several times in the right-hand side of \eqref{car_inter}. Then, using the expression of the second equation of \eqref{adj_sys_foll}, it is not difficult to see that 
\begin{equation}\label{inter_res}
\begin{split}
\iint_{\omega_0\times(0,T)}&e^{-2s\alpha}s\xi|\theta|^2\dx\dt\\
&\leq \frac{1}{\gamma^2}\iint_{Q}e^{-2s\alpha}s\xi |\varphi|^2\dx\dt+\iint_{\omega_1\times(0,T)}e^{-2s\alpha}\left(s^3\xi^3|\theta| |\varphi|+s^2\xi^2|\nabla \theta||\varphi|\right)\dx\dt,
\end{split}
\end{equation}
where we have used that for any $s$ large enough, the following inequalities hold
\begin{equation*}
|\Delta(\zeta e^{-2s\alpha}\xi)|\leq Cs^2e^{-2s\alpha}\xi^3 \quad \text{and}\quad |(e^{-2s\alpha}\xi)_t|\leq Cse^{-2s\alpha}(\xi)^{2+\frac{1}{m}}.
\end{equation*}

Using H\"older and Young inequalities in the last term of \eqref{inter_res}, it follows that
\begin{equation}\label{est_final_1}
\begin{split}
\iint_{\omega_0\times(0,T)}e^{-2s\alpha}s\xi|\theta|^2\dx\dt\leq& \frac{1}{\gamma^2}\iint_{Q}e^{-2s\alpha}s\xi |\varphi|^2\dx\dt+\delta \widetilde I(s;\theta)+C_{\delta}s^5\iint_{\omega_1\times(0,T)}e^{-2s\alpha}\xi^5|\varphi|^2\dx\dt.
\end{split}
\end{equation}
for any $\delta>0$. We replace \eqref{est_final_1} in \eqref{est_locales} with $\delta$ small enough and  since $\gamma$ is sufficiently large, we can absorb the first two terms of the above inequality. Finally, noting that $\omega_1\subset \omega$, we obtain the desired inequality. This concludes the proof of Proposition \ref{prop_carleman}
\end{proof}

\subsubsection*{The observability inequality}

The observability inequality \eqref{obs_ineq_1} follows from the Carleman estimate \eqref{car_final}, some weighted energy estimates and it is based on well-known arguments which can be adapted from \cite[Lemma 1]{cara_NS}. For the sake of completeness, we sketch it briefly. 

Let us introduce the weight functions:
\begin{equation}\label{weights_rec} 
\begin{split}
&\beta(x,t)= \frac{e^{2\lambda\|\eta^0\|_\infty}-e^{\lambda\eta^0(x)}}{\bar l^4(t)}, \quad \phi(x,t)=\frac{e^{\lambda \eta^0(x)}}{\bar l^4(t)} \\
&\beta^*(t)=\max_{x\in\overline{\Omega}}\beta(x,t), \quad \phi^*(t)=\min_{x\in\overline\Omega} \phi(x,t),
\end{split}
\end{equation}
where 
\begin{equation*}
\bar l(t)=
\begin{cases}
\|l\|_\infty &\quad\text{for}\quad 0\leq t\leq T/2, \\
l(t) &\quad\text{for}\quad T/2\leq t\leq T. 
\end{cases}
\end{equation*}

We set $s$ to a sufficiently large fixed value. By construction, $\alpha=\beta$ in $[0,T/2]$. Thus, 
\begin{align} \notag 
&\iint_{\Omega\times(T/2,T)}e^{-2s\beta}\phi^3|\varphi|^2\dx\dt+\iint_{\Omega\times(T/2,T)}e^{-2s\beta}\phi|\nabla \varphi|^2\dx\dt+\iint_{\Omega\times(T/2,T)}e^{-2s\beta^*}|\theta|^2\dx\dt  \\ \notag
&\quad = \iint_{\Omega\times(T/2,T)}e^{-2s\alpha}\xi^3|\varphi|^2\dx\dt + \iint_{\Omega\times(T/2,T)}e^{-2s\alpha}\xi|\nabla\varphi|^2\dx\dt + \iint_{\Omega\times(T/2,T)}e^{-2s\alpha^*}|\theta|^2\dx\dt \\ \label{car_rec}
&\quad \leq  C(s,T)\left(\iint_{\omega\times(0,T)}e^{-2s\beta}\phi^5|\varphi|^2\dx\dt\right)
\end{align}
where we have used the Carleman inequality \eqref{car_final}, the weight properties \eqref{weights_l}, \eqref{weights_rec} and the fact that $\beta\leq \alpha$ in Q. 

On the other hand, consider a function $\tilde \eta\in C^1([0,T])$, such that 
\begin{equation*}
\tilde\eta \equiv 1 \quad\text{in } [0,T/2], \quad \tilde\eta\equiv 0 \quad \text{in } [3T/4,T], \quad |\tilde\eta^\prime|\leq C/T.
\end{equation*}
Let us multiply by $\tilde\eta\Delta\varphi$ the equation verified by $\varphi$ (see \eqref{adj_sys_foll}) and integrate in $\Omega$, namely 
\begin{equation*}
-\int_{\Omega}\varphi_t\Delta\varphi\tilde\eta\dx-\int_{\Omega}|\Delta \varphi|^2\tilde\eta\dx=\int_{\mathcal O_d}\theta\Delta\varphi\tilde \eta\dx
\end{equation*}
Integrating by parts and using H\"older and Young inequalities yield
\begin{equation*}
-\frac{1}{2}\frac{\d}{\dt}\int_{\Omega}|\nabla\varphi|^2\tilde\eta\dx+\int_{\Omega}|\Delta\varphi|^2\tilde\eta\dx\leq \delta \int_{\Omega}|\Delta\varphi|^2\tilde\eta\dx+C_\delta\int_{\Omega}|\theta|^2\tilde\eta\dx+\frac{1}{2}\int_{\Omega}|\nabla\varphi|^2|\eta^\prime|\dx
\end{equation*}
for any $\delta>0$.  Choosing $\delta$ small enough and dropping the positive term involving $|\Delta\varphi|^2\tilde\eta$, we integrate in $[0,T]$, from which we obtain
\begin{equation}\label{nabla_theta_0}
\int_{\Omega}|\nabla\varphi(0)|^2\dx\leq C\left(\iint_{\Omega\times(0,3T/4)}|\theta|^2\dx\dt+\frac{1}{T}\iint_{\Omega\times(T/2,3T/4)}|\nabla\varphi|^2\dx\dt\right).
\end{equation}
Here we have used the properties of the function $\tilde\eta$. 

Using the definition of the weight $\beta$, it can be readily seen that
\begin{equation*}
e^{-2s\beta}\phi\geq C >0 \quad\text{and}\quad \quad  e^{-2s\beta^*}\geq C>0, \quad \forall t\in[T/2,3T/4],
\end{equation*}
and therefore we can bound appropriately in \eqref{nabla_theta_0} to obtain
\begin{equation*}
\int_{\Omega}|\nabla\varphi(0)|^2\dx\leq C\left(\iint_{\Omega\times(0,T/2)}|\theta|^2\dx\dt+\iint_{\Omega\times(T/2,3T/4)}\left(e^{-2s\beta}\phi|\nabla\varphi|^2+e^{-2s\beta^\star}|\theta|^2\right)\dx\dt\right).
\end{equation*}
Observe that the second term in the above expression can be estimated with inequality \eqref{car_rec}, this is
\begin{equation}\label{est_h01_theta}
\int_{\Omega}|\nabla\varphi(0)|^2\dx\leq C\left(\iint_{\Omega\times(0,T/2)}|\theta|^2\dx\dt+\iint_{\omega\times(0,T)}e^{-2s\beta}\phi^5|\varphi|^2\dx\dt\right).
\end{equation}

Using similar arguments, one may deduce an estimate of the form,
\begin{equation}\label{est_phi_0}
\iint_{\Omega\times(0,T/2)}e^{-2s\beta}\phi^3|\varphi|^2\dx\dt\leq C\left(\iint_{\Omega\times(0,T/2)}|\theta|^2\dx\dt+\iint_{\omega\times(0,T)}e^{-2s\beta}\phi^5|\varphi|^2\dx\dt\right),
\end{equation}
where we have used that $\beta$ and $\phi$ are bounded functions in $\Omega\times[0,T/2]$.

Arguing as in the proof of Lemma \ref{prof_regularity}, we can deduce the following estimate for the solutions to $\theta$ in the domain $\Omega\times [0,T/2]$
\begin{align}\notag 
\|\theta\|^2_{L^2(0,T/2;L^2(\Omega))}&\leq C\|\theta\|^2_{W(\Omega\times(0,T/2))}\\ \notag
&\leq C\left(\frac{1}{\gamma^4}\|\varphi\|^2_{L^2(\Omega\times(0,T/2))}+\frac{1}{\ell^4}\|\tfrac{\partial \varphi}{\partial n}\|^2_{H^{\frac12,\frac14}(\Omega\times(0,T/2))}\right) \\ \label{est_theta}
&\leq \frac{C}{\mu}\|\varphi\|_{H^{2,1}(\Omega\times(0,T/2))}^2,
\end{align}
where $\mu=\min\{\ell^4,\gamma^4\}$. Observe that in this domain, the function $\beta^\star$ is actually constant in both variables, therefore, from the above inequality we get
\begin{align}\notag
\iint_{\Omega\times(0,T/2)}e^{-2s\beta^*}|\theta|^2\dx\dt &\leq \frac{C}{\mu}\|e^{-s\beta^*}\varphi\|_{H^{2,1}(\Omega\times(0,T/2))}^2 \\ \label{est_theta_tme}
&\leq \frac{C}{\mu}\|e^{-s\beta^*}\varphi\|_{H^{2,1}(Q)}^2.
\end{align}

Combining \eqref{car_rec} with \eqref{est_h01_theta}--\eqref{est_theta_tme} we deduce
\begin{equation}\label{obs_mu}
\begin{split}
\|\varphi(0)\|_{H_0^1(\Omega)}^2+\iint_{Q}&e^{-2s\beta}\phi^3|\varphi|^2\dx\dt+\iint_{Q}e^{-2s\beta^*}|\theta|^2\dx\dt \\
&\leq \frac{C}{\mu}\|e^{-s\beta^*}\|_{H^{2,1}(Q)}^2+ C\iint_{\omega\times(0,T)}e^{-2s\beta}\phi^5|\varphi|^2\dx\dt.
\end{split}
\end{equation}

To conclude, it is enough to estimate the $H^{2,1}$-norm in the right-hand side of \eqref{obs_mu}. To do this, we use the system \eqref{phi_gorro} and estimate \eqref{est_regul} with $\sigma=e^{-s\beta^\star}$. More precisely, we have 
\begin{equation}\label{est_C_mu}
\|e^{-s\beta^\star}\|^2_{H^{2,1}(Q)}\leq C\left(\iint_{Q}e^{-2s\beta^\star}(\phi^\star)^{5/2}|\varphi|^2\dx\dt+\iint_{Q}e^{-2s\beta^\star}|\theta|^2\dx\dt\right).
\end{equation}
Finally, we obtain the observability inequality \eqref{obs_ineq_1} with $\varrho_1(t):=e^{s\beta^\star}$ by combining estimates \eqref{obs_mu}--\eqref{est_C_mu}, using the definition of $\beta^\star$ and $\phi^\star$ and taking $\gamma,\ell>>1$. This concludes the proof. 

\section{The case with distributed follower and boundary leader}\label{sec_bound_leader}

In this section, we address the robust hierarchic problem for the case when the leader is now placed on the boundary. More precisely, let us consider systems of the form
\begin{equation}\label{heat_dif1}
\begin{cases}
z_t-\Delta z=\csin{\mathcal B_1}v+\csin{\mathcal B_2}\psi, & \text{in Q}, \\
z=h\chi_{\cbd} &\text{on } \Sigma, \\
z(x,0)=z^0(x), & \text{in } \Omega.
\end{cases}
\end{equation}

It is well-known (see, e.g., \cite{ima_original,cara_guerrero}) that for $v\equiv\psi\equiv 0$, system \eqref{heat_dif1} is null controllable at time $T$ for any open set $\Gamma\subset\partial \Omega$. Here, we will see that using a hierarchic methodology for solving first the associated robust control problem introduces additional difficulties when dealing with the null controllability objective. 

Adapting the arguments of Section \ref{ex_uniq_saddle}, it is not difficult to prove the existence and uniqueness of the saddle point $(\bar v,\bar \psi)$ for the cost functional 
\begin{equation*}
K_r(v,\psi;h)=\frac{1}{2}\iint_{\mathcal O_d\times(0,T)}|y-y_d|^2\dx\dt+\frac{1}{2}\left[\ell^2\iint_{\mathcal B_1\times(0,T)}|v|^2\dx\dt-\gamma^2\iint_{\mathcal B_2\times(0,T)}|\psi|^2\dx\dt\right].
\end{equation*}
Indeed, for any given $h\in L^2(\cbd\times(0,T))$ and $y_0\in H^{-1}(\Omega)$, one can prove that
\begin{equation*}
\bar v=-\frac{1}{\ell^2}q|_{\mathcal B_1} \quad\text{and}\quad \bar \psi=\frac{1}{\gamma^2}q|_{\mathcal B_2}
\end{equation*}
are the solution to the robust control problem (see and adapt accordingly Definition \ref{defi_rob}), where $p$ is found from the solution $(z,p)$ to the optimality system 

\begin{equation}\label{couple1}
\begin{cases}
z_t-\Delta z=-\frac{1}{\ell^2}p\csin{\mathcal B_1}+\frac{1}{\gamma^2}p\csin{\mathcal B_2}, & \text{in Q}, \\
-p_t-\Delta p=(z-z_d)\chi_{\mathcal O_d}, & \text{in Q}, \\
z=h\csin{\Gamma}, \quad p=0 &\text{on } \Sigma, \\
z(x,0)=z^0(x), \quad p(x,T)=0& \text{in } \Omega.
\end{cases}
\end{equation}
Then, the problem amounts to study the null controllability of the coupled system \eqref{couple1}. 

As mentioned in the previous section, we can build the control $h$ of minimal norm by adapting the well-known HUM method. For this particular case, the main ingredient lies at finding, for each $\varepsilon>0$, the minimizers of the functional
\begin{equation}\label{func_FR}
F_{\epsilon}(\varphi^T)=\frac{1}{2}\iint_{\cbd\times(0,T)}\left|\frac{\partial \varphi}{\partial n}\right|^2\d\sigma\dt+\frac{\varepsilon}{2}\|\varphi^T\|_{H_0^1(\Omega)}+\langle z^0(x),\varphi(0)\rangle_{-1,1}-\iint_{Q}\theta z_d \dx\dt
\end{equation}
defined for the solutions to the adjoint system
\begin{equation}\label{adjunto1}
\begin{cases}
-\varphi_t-\Delta \varphi=\theta\csin{\mathcal \mathcal O_d}, & \text{in Q}, \\
\theta_t-\Delta \theta=-\frac{1}{\ell^2}\varphi\csin{\mathcal B_1}+\frac{1}{\gamma^2}\varphi \csin{\mathcal B_2}, & \text{in Q}, \\
\varphi=0, \quad \theta=0 &\text{on } \Sigma, \\
\varphi(x,T)=\varphi^T(x), \quad \theta(x,0)=0& \text{in } \Omega.
\end{cases}
\end{equation}

As before, we mainly focus proving an observability inequality to establish the coerciveness of the functional \eqref{func_FR} and thus guaranteeing the existence and uniqueness of the minimizers. We have the following result.
\begin{proposition}\label{prop_obs_ineq_dif}
Assume that \eqref{loc_teo2} holds and $\ell,\gamma$ are large enough.  Then, there exist a positive constant $C$ and a weight function $\vartheta$ blowing up at $t=T$ such that the observability inequality
\begin{equation}\label{obs_ineq_dif}
\left\|\varphi(x,0)\right\|^2_{H_0^1(\Omega)}+\iint_{Q}{\vartheta}_2^{-2}|\theta|^2dxdt\leq C\iint_{\cbd \times(0,T)}\left|\frac{\partial \varphi}{\partial n} \right|^2\d\sigma\dt
\end{equation}
holds for every $\varphi^T\in H_0^{1}(\Omega)$, where $(\varphi,\theta)$ is the solution to \eqref{adjunto1} associated to the inital datum $\varphi^T$. 
\end{proposition}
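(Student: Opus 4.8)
I would mirror the two–stage scheme of Section~\ref{sec_null_1}. First, establish a global Carleman estimate for the coupled adjoint system \eqref{adjunto1} whose right–hand side contains only the boundary observation $\iint_{\cbd\times(0,T)}e^{-2s\alpha}\big|\tfrac{\partial \varphi}{\partial n}\big|^2$, and then deduce \eqref{obs_ineq_dif} from it exactly as \eqref{obs_ineq_1} was deduced from \eqref{car_final}. The second stage needs no new idea and is in fact lighter here: in \eqref{adjunto1} both $\varphi$ and $\theta$ carry homogeneous Dirichlet data, so the parabolic–regularity estimate for $\theta$ on $\Omega\times(0,T/2)$ reduces to $\|\theta\|_{W(\Omega\times(0,T/2))}\le\tfrac{C}{\mu}\|\varphi\|_{L^2(\Omega\times(0,T/2))}$ with $\mu=\min\{\ell^2,\gamma^2\}$ (no boundary–trace term appears), and the whole chain \eqref{car_rec}--\eqref{est_C_mu}, with the recovered weight of \eqref{weights_rec} associated to the function used for $\varphi$, produces \eqref{obs_ineq_dif} with $\vartheta_2:=e^{s\beta^\star}$ for $\ell,\gamma\gg1$. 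Hence everything reduces to the Carleman estimate, which is where hypothesis \eqref{loc_teo2} is used.

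\textbf{Two Carleman weights.} Fix a nonempty open set $\omega_0$ with $\overline{\omega_0}\subset\mathcal O_d$; by \eqref{loc_teo2} it is disjoint from $\overline{\mathcal B_1}\cup\overline{\mathcal B_2}\cup\overline{\cbd}$. I would choose two functions $\eta^0_1,\eta^0_2\in C^2(\overline\Omega)$ and form, as in \eqref{weights_l}, two families of weights $(\alpha^{(k)},\xi^{(k)})$, $k=1,2$: $\eta^0_1$ adapted to a \emph{boundary observation on $\cbd$} (that is, $\eta^0_1>0$ in $\Omega$, $\eta^0_1=0$ on $\partial\Omega$, $\partial_n\eta^0_1<0$ on $\cbd$, with $|\nabla\eta^0_1|$ positive outside a small set adjacent to $\cbd$; this is classical, cf.\ \cite{fursi,ima_original} and the analogous choice in \cite{da_silva}), and $\eta^0_2$ of the standard type \eqref{constr_1} with observation set $\omega_0$. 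The point — and the reason \eqref{loc_teo2} is "not a common feature" — is that, $\mathcal O_d$ being separated from $\mathcal B_1\cup\mathcal B_2$ while $\overline\cbd\subset\partial\mathcal B_1\cap\partial\mathcal B_2$, these functions can be picked so that the weights are \emph{additively comparable} in $Q$, $\alpha^{(1)}\le\alpha^{(2)}\le\alpha^{(1)}+C$, yet \emph{strictly separated} on the observation region of $\theta$, $\alpha^{(2)}\ge\alpha^{(1)}+c_0$ on $\overline{\mathcal O_d}$ for some $c_0>0$. These two facts are precisely what makes the cross–terms absorbable below.

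\textbf{Combining the estimates.} Applying a boundary–observation Carleman estimate for the heat equation (legitimate since $\varphi=0$ on $\Sigma$) to the first line of \eqref{adjunto1} with $(\alpha^{(1)},\xi^{(1)})$, and the classical estimate of Lemma~\ref{2once} — read for the forward heat equation, $\theta=0$ on $\Sigma$ — to the second line with $(\alpha^{(2)},\xi^{(2)})$ and observation set $\omega_0$, and adding them, I would obtain, for $s$ large and a fixed $k_0>0$,
\begin{align*}
I(s;\varphi)+I(s;\theta)&\leq C\Bigg(\iint_{\mathcal O_d\times(0,T)}e^{-2s\alpha^{(1)}}|\theta|^2\dx\dt+\frac1{\mu^2}\iint_{(\mathcal B_1\cup\mathcal B_2)\times(0,T)}e^{-2s\alpha^{(2)}}|\varphi|^2\dx\dt \\
&\qquad\quad +s^3\iint_{\omega_0\times(0,T)}e^{-2s\alpha^{(2)}}(\xi^{(2)})^3|\theta|^2\dx\dt+s^{k_0}\iint_{\cbd\times(0,T)}e^{-2s\alpha^{(1)}}(\xi^{(1)})^{k_0}\Big|\frac{\partial\varphi}{\partial n}\Big|^2\d\sigma\dt\Bigg).
\end{align*}
The $\theta$–source term on $\mathcal B_i$ is absorbed into $I(s;\varphi)$ by $\alpha^{(1)}\le\alpha^{(2)}$ and $\mu\to\infty$; the $\varphi$–source term $\iint_{\mathcal O_d}e^{-2s\alpha^{(1)}}|\theta|^2$ into $I(s;\theta)$, again by $\alpha^{(1)}\le\alpha^{(2)}$; and the interior term over $\omega_0$ is removed exactly as in \eqref{car_inter}--\eqref{est_final_1}: since $\overline{\omega_0}\subset\mathcal O_d$, the first equation of \eqref{adjunto1} gives $\theta=-\varphi_t-\Delta\varphi$ there, so after integrations by parts it is controlled by $\delta\,I(s;\theta)+\tfrac{C}{\gamma^2}\iint_Q e^{-2s\alpha^{(2)}}s\xi^{(2)}|\varphi|^2+C_\delta\,s^{k_1}\iint_{\omega_1\times(0,T)}e^{-2s\alpha^{(2)}}(\xi^{(2)})^{k_1}|\varphi|^2$ with $\overline{\omega_1}\subset\mathcal O_d$; the first is absorbed for $\delta$ small, the second for $\gamma$ large, and the third into $I(s;\varphi)$ because on $\overline{\mathcal O_d}$ one has $e^{-2s\alpha^{(2)}}\le e^{-2sc_0}e^{-2s\alpha^{(1)}}$ and the exponential $e^{-2sc_0}$ kills the polynomial $s^{k_1}(\xi^{(1)})^{k_1}$ for $s$ large. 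One is left with $I(s;\varphi)+I(s;\theta)\le C\,s^{k_0}\iint_{\cbd\times(0,T)}e^{-2s\alpha^{(1)}}(\xi^{(1)})^{k_0}|\partial_n\varphi|^2$, which feeds the second stage. (An equivalent route applies a boundary–observation Carleman to \emph{both} lines of \eqref{adjunto1}; this produces an extra boundary term in $\partial_n\theta$ on $\cbd$, which is eliminated — since $\theta$ vanishes on $\Sigma$ and its source is $O(\mu^{-1})$ — by the trace argument \eqref{estimate_1}--\eqref{estimate_2} together with the weighted regularity estimate \eqref{est_regul}; this is the "elimination of one boundary observation term" that the special structure of \eqref{adjunto1} allows.)

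\textbf{Main obstacle.} The delicate point is the simultaneous construction of $\eta^0_1,\eta^0_2$: one needs the $\theta$–weight to decay strictly faster than the $\varphi$–weight on $\mathcal O_d$ (to kill the local $\varphi$–terms created when the $\omega_0$–term of $\theta$ is eliminated through the first equation) while keeping the two weights additively comparable on all of $Q$ (to absorb the coupling terms), and it is precisely the geometry in \eqref{loc_teo2} — $\overline{\mathcal O_d}$ separated from $\overline{\mathcal B_i}$ and $\overline\cbd\subset\partial\mathcal B_i$ — that makes such a pair exist. Verifying that the strict exponential gap indeed survives the various integrations by parts (i.e.\ that it dominates every power of $s$ and $\xi$ generated along the way) is the real bookkeeping burden; the remaining steps are routine adaptations of Section~\ref{sec_null_1} and of \cite{da_silva,vhs_deT_rob}.
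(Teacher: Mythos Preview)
Your main route has a genuine gap in the weight design. You require the pair $(\alpha^{(1)},\alpha^{(2)})$ to satisfy $\alpha^{(1)}\le\alpha^{(2)}$ in $Q$ together with a \emph{strict} gap $\alpha^{(2)}\ge\alpha^{(1)}+c_0$ on $\overline{\mathcal O_d}$; these two conditions are mutually consistent, but they defeat one of your own absorption steps. Indeed, the source term coming from the Carleman estimate applied to $\varphi$ is $\iint_{\mathcal O_d}e^{-2s\alpha^{(1)}}|\theta|^2$, and you claim it is absorbed into $I(s;\theta)$ ``again by $\alpha^{(1)}\le\alpha^{(2)}$''. The inequality goes the wrong way: since $I(s;\theta)$ is built with the weight $e^{-2s\alpha^{(2)}}$, absorption would require $e^{-2s\alpha^{(1)}}\le C(s\xi^{(2)})^{3}e^{-2s\alpha^{(2)}}$ on $\mathcal O_d$, i.e.\ $\alpha^{(2)}-\alpha^{(1)}\le O(\tfrac{\ln s}{s})$ there. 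This is incompatible with the strict gap $\alpha^{(2)}-\alpha^{(1)}\ge c_0>0$ on $\overline{\mathcal O_d}$ that you need in order to kill the local $\varphi$--term produced when you eliminate the $\omega_0$--observation of $\theta$ through the first equation. In short, your two absorptions on $\mathcal O_d$ pull in opposite directions and cannot both hold for large $s$; no choice of $\eta^0_1,\eta^0_2$ resolves this.

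The paper in fact takes your parenthetical ``equivalent route'': it applies a boundary--observation Carleman inequality to \emph{both} equations of \eqref{adjunto1}, but with two weights $\eta_1,\eta_2$ (both of the type \eqref{constr_bound}) chosen so that $\eta_1\ge\eta_2$ in $\Omega$ and, crucially, $\eta_1=\eta_2$ on $\mathcal O_d$ (equality, not a strict gap). Equality on $\mathcal O_d$ is precisely what makes the $\varphi$--source term $\iint_{\mathcal O_d}e^{-2s\tilde\alpha_1}|\theta|^2=\iint_{\mathcal O_d}e^{-2s\tilde\alpha_2}|\theta|^2$ absorbable. The extra boundary observation $\iint_{\mathcal S'}e^{-2s\tilde\alpha_2}|\partial_n\theta|^2$ is then eliminated, but not by the trace argument \eqref{estimate_1}--\eqref{estimate_2} you invoke (that produces no smallness). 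Instead, one first passes to weights $\tilde\beta_i$ that do not vanish at $t=0$, and then performs a \emph{weighted energy estimate} on the $\theta$--equation with a time--nondecreasing weight $\widehat\beta_2(t)$; monotonicity makes the $(\widehat\beta_2)_t$--term have the right sign, so one obtains $\int_0^Te^{-2s\widehat\beta_2}\|\theta(t)\|_{H^2(\Omega)}^2\,\dt\le C\mu^{-2}\sum_i\iint_{\mathcal B_i\times(0,T)}e^{-2s\widehat\beta_2}|\varphi|^2$. The geometric hypothesis \eqref{loc_teo2} enters through the additional requirement $\eta_1\ge\max_{\overline{\mathcal B}_i}\eta_2$ on $\overline{\mathcal B}_i$, which yields $e^{-2s\widehat\beta_2}\le e^{-2s\tilde\beta_1}$ on $\mathcal B_i\times(0,T)$ and hence allows these localized $\varphi$--terms to be absorbed into the left--hand side for $\ell,\gamma$ large. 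So the ``strict separation'' of weights that you were looking for lives on the $\mathcal B_i$'s, not on $\mathcal O_d$.
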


\begin{remark}
Some remarks are in order.
\begin{itemize}
\item To avoid unnecessary notation, we have decided to keep the letters $(\varphi,\theta)$ to denote the adjoint variables. Notice, however, that  systems \eqref{adj_sys_foll} and \eqref{adjunto1} are substantially different. 
\item The well-posedness and regularity of \eqref{adjunto1} follows from an argument similar to the one in Proposition \ref{prof_regularity}. Indeed, in this case, one may prove that for any $\varphi^T\in H_0^1(\Omega)$ and $\ell,\gamma$ large enough, system \eqref{adjunto1} admits a unique solution $(\varphi,\theta)\in H^{2,1}(Q)\times H^{2,1}(Q)$. 
\item From this regularity, we have that $\varphi\in L^2(0,T;H^2(\Omega))$ which, together with well-known trace estimates, justify that $\frac{\partial\varphi}{\partial n}\in L^2(\Sigma)$.
\end{itemize}
\end{remark}

As in the end of Section \ref{bound_follow}, the observability inequality \eqref{obs_ineq_dif} follows from a global Carleman estimate for the solutions to \eqref{adjunto1} and some weighted energy estimates. 

We begin by  introducing some useful weight functions. We set $\mathcal S^\prime\subset\subset \cbd$ and consider $\eta_{i}\in C^2(\overline \Omega)$, $i=1,2$, verifying
\begin{equation}\label{constr_bound}
\eta_{i}>0\quad \text{and}\quad |\nabla\eta_{i}|>0 \quad \text{in } \Omega, \quad \frac{\partial\eta_{i}}{\partial \eta}\leq 0 \quad \text{on } \partial \Omega\setminus S^\prime.
\end{equation}
This is the classical weight function introduced in \cite[Lemma 1.2]{ima_original} to prove a Carleman inequality with boundary observation for the heat equation and means that it should not have critical points inside the domain $\Omega$. In addition, we will choose the functions $\eta^1$ and $\eta^2$ in a way such that
\begin{gather}\label{props_brazil}
\eta_1\geq \eta_2 \quad\text{in }\Omega, \quad\eta_1=\eta_2\quad \text{in  }\mathcal O_d, \\ \label{prop_impor}
 \eta_1\geq \max_{x\in \bar{\mathcal B}_i} \eta_2 \quad \text{in } \bar{\mathcal B}_i, \quad i=1,2.
\end{gather}
These additional features where recently used in \cite{da_silva} for solving a similar hierarchic control problem. 
\begin{remark}
From property \eqref{props_brazil}, we can note that,
\begin{equation}\label{weights_12}
\tilde{\xi}_2\leq \tilde{\xi}_1,\quad \tilde\alpha_1\leq \tilde\alpha_2\quad\text{in } Q\quad\text{and}\quad\tilde{\xi}_2= \tilde{\xi}_1,\quad \tilde\alpha_1=\tilde\alpha_2\quad\text{in } \mathcal{O}_d\times(0,T), \quad i=1,2. 
\end{equation}
\end{remark} 

For all $\lambda\geq1$, let us consider the following weight functions
\begin{equation}\label{pesos_boundary}
\begin{split}
\tilde{\alpha}_i(x,t)=\frac{e^{\lambda(\|\eta_1\|+\|\eta_2\|)}-e^{\lambda \eta_i(x)}}{t^2(T-t)^2}, \qquad \tilde{\xi}_i(x,t)=&\frac{e^{\lambda\eta_i(x)}}{t^2(T-t)^2}, \quad i=1,2,
\end{split}\end{equation}
and for some parameters $s>0$ and $m\in\mathbb R$, we set the notation
\begin{equation*}
I(u;i,m):=\iint_{Q}e^{-2s\tilde{\alpha}_i}(s\tilde{\xi}_i)^{1+m}|\nabla u|^2\dx\dt+\iint_{Q}e^{-2s\tilde{\alpha}_i}(s\tilde{\xi}_i)^{3+m}|u|^2\dx\dt.
\end{equation*}

We have the following Lemma.
\begin{lemma}\label{lemma_car_boundary}
Assume that $u_0\in H_0^1(\Omega)$ and $f\in L^2(Q)$. For any $m\in \mathbb R$, there exists a constant $\lambda_m$, such that for any $\lambda\geq \lambda_m$ there exist constants $C>0$ independent of $s$ and $s_m(\lambda)>0$, such that the solution of \eqref{sys_u} satisfies
\begin{equation}\label{Est1}
I(u;i,m)\leq C\left(\iint_{Q}e^{-2s\tilde{\alpha}_i}(s\tilde{\xi}_i)^m|f|^2\dx\dt+\iint_{\mathcal S^\prime\times(0,T)}e^{-2s\tilde{\alpha}_i}(s\tilde{\xi}_i)^{1+m}\left|\frac{\partial u}{\partial n}\right|^2\d\sigma\dt\right)
\end{equation}
for every $s\geq s_m(\lambda)$. 
\end{lemma}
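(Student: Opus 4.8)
\textbf{Proof plan for Lemma \ref{lemma_car_boundary}.}

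The plan is to derive \eqref{Est1} from the standard boundary-observation Carleman estimate for the heat equation, namely the Imanuvilov--Puel type inequality proved in \cite[Lemma 1.2]{ima_original}, by the usual weight-shifting trick that upgrades the power of $s\tilde\xi_i$ from the base estimate to an arbitrary exponent $m\in\mathbb R$. First I would recall that, with the weight functions $\tilde\alpha_i,\tilde\xi_i$ built from the functions $\eta_i$ satisfying \eqref{constr_bound}, the classical result gives, for each fixed $i$, a threshold $\lambda_i^0$ and for $\lambda\ge\lambda_i^0$ constants $C>0$ and $s_i^0(\lambda)>0$ such that the solution $u$ of \eqref{sys_u} satisfies
\begin{equation*}
I(u;i,0)\le C\left(\iint_Q e^{-2s\tilde\alpha_i}|f|^2\dx\dt+\iint_{\mathcal S^\prime\times(0,T)}e^{-2s\tilde\alpha_i}(s\tilde\xi_i)\left|\frac{\partial u}{\partial n}\right|^2\d\sigma\dt\right)
\end{equation*}
for $s\ge s_i^0(\lambda)$; this is exactly \eqref{Est1} with $m=0$. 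Here one uses that $\eta_i$ has no interior critical points and that $\partial_n\eta_i\le 0$ on $\partial\Omega\setminus\mathcal S^\prime$, which is what makes the boundary integrand supported (up to exponentially small terms) on $\mathcal S^\prime$.

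The second step is the change of unknown. Fix $m\in\mathbb R$ and set $w:=\sigma_i u$ with $\sigma_i:=e^{0}$ replaced by the appropriate power, more precisely one works with the substitution that turns the weight $e^{-2s\tilde\alpha_i}(s\tilde\xi_i)^{m}$ into $e^{-2s\tilde\alpha_i}$; concretely one writes the Carleman inequality for the conjugated operator or, equivalently, one multiplies the equation by a power of the weight. The cleanest route is to introduce $z = (s\tilde\xi_i)^{m/2} u$ (after first multiplying $u$ by a smooth cutoff in time near $t=0,T$ is unnecessary since $\tilde\xi_i$ already blows up there and the powers of $\tilde\xi_i$ are absorbed) and observe that $z$ solves a heat equation with right-hand side $(s\tilde\xi_i)^{m/2}f$ plus commutator terms of the form $\partial_t\big((s\tilde\xi_i)^{m/2}\big)u$ and $\nabla\big((s\tilde\xi_i)^{m/2}\big)\cdot\nabla u$ and $\Delta\big((s\tilde\xi_i)^{m/2}\big)u$. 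Using the elementary pointwise bounds $|\partial_t(s\tilde\xi_i)^{k}|\le C s(s\tilde\xi_i)^{k}\tilde\xi_i^{1/2}$ (from $|\tilde\xi_{i,t}|\le CT\tilde\xi_i^{3/2}$, i.e. $|\tilde\xi_{i,t}|\le C\tilde\xi_i^{1+1/2}$ with the time factor $t^2(T-t)^2$) and $|\nabla(s\tilde\xi_i)^k|\le C\lambda(s\tilde\xi_i)^k$, $|\Delta(s\tilde\xi_i)^k|\le C\lambda^2(s\tilde\xi_i)^k$, one applies the $m=0$ estimate to $z$ and then, for $\lambda$ and $s$ large enough, absorbs all commutator contributions into the left-hand side $I(u;i,m)$. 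On the boundary, since $\mathcal S^\prime\subset\subset\Gamma$ and $u=0$ on $\Sigma$, one has $\partial_n z = (s\tilde\xi_i)^{m/2}\partial_n u$ on $\mathcal S^\prime$, so the boundary term transforms exactly as required and produces $\iint_{\mathcal S^\prime\times(0,T)}e^{-2s\tilde\alpha_i}(s\tilde\xi_i)^{1+m}|\partial_n u|^2$.

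The main obstacle, and the only place where care is genuinely needed, is the absorption of the commutator terms: the terms coming from $\nabla(s\tilde\xi_i)^{m/2}\cdot\nabla u$ contribute $\lambda^2\iint_Q e^{-2s\tilde\alpha_i}(s\tilde\xi_i)^{1+m}|\nabla u|^2$-type quantities which are of the same order as the first term of $I(u;i,m)$, so they cannot be absorbed by taking $s$ large — they must be absorbed by taking $\lambda$ large \emph{first}, using that the base estimate carries an extra power of $\lambda$ in front of the gradient term on its left-hand side (this is the standard reason the $m=0$ inequality is usually stated with $\lambda$-powers, which one keeps track of). Once $\lambda$ is fixed large enough to beat the $\lambda^2$ commutator factor, the remaining time-commutator and lower-order terms carry an extra factor $\tilde\xi_i^{1/2}\le C T^{1/2}(t(T-t))^{-1}$ which, combined with a power of $s^{-1}$, is absorbed by choosing $s\ge s_m(\lambda)$. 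This yields \eqref{Est1}, completing the proof; finally one sets $\lambda_m:=\max\{\lambda_1^0,\lambda_2^0,\lambda_m^{\mathrm{comm}}\}$ and $s_m(\lambda)$ accordingly, uniformly in $i\in\{1,2\}$.
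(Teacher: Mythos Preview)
Your proposal is correct and follows essentially the same route as the paper: the paper does not give a detailed argument but simply states that \eqref{Est1} ``can be deduced from the Carleman inequality stated in \cite[Lemma 1.1]{ima_original} and adapting the arguments in \cite[Lemma 2.3]{ima_yama}'', and the weight-shifting substitution $z=(s\tilde\xi_i)^{m/2}u$ applied to the base boundary-observation estimate is precisely the content of that adaptation. One minor remark: your concern about needing the $\lambda$-powers on the left of the base estimate to absorb the gradient commutator is slightly overstated, since that commutator contributes a term of order $\lambda^2(s\tilde\xi_i)^{m}|\nabla u|^2$ against $(s\tilde\xi_i)^{1+m}|\nabla u|^2$ on the left, and hence can already be absorbed by taking $s\ge s_m(\lambda)$ large; but either mechanism works and the argument is sound.
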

The proof of this result can be deduced from the Carleman inequality stated in \cite[Lemma 1.1]{ima_original} and adapting the arguments in \cite[Lemma 2.3]{ima_yama}. Now, we are in position to prove the observability inequality \eqref{obs_ineq_dif}.

\begin{proof}[Proof of Proposition \ref{prop_obs_ineq_dif}]
We apply estimate \eqref{Est1} to the each equation of the coupled system \eqref{adjunto1} with different values of $i$ and $m$ and add them up. In more detail, we have that
\begin{align}\notag
I(\varphi;1,0)&+I(\theta;2, -1)\\\notag
&\leq C\left(\iint_{\mathcal O_d\times(0,T)}e^{-2s\tilde{\alpha}_1}|\theta|^2\dx\dt+\iint_{\mathcal S^\prime\times(0,T)}e^{-2s\tilde{\alpha}_1}(s\tilde{\xi}_1)\left|\frac{\partial \varphi}{\partial n}\right|^2\d\sigma\dt\right.\\ \notag
&\quad+\left.\iint_{Q}e^{-2s\tilde{\alpha}_2}(s\tilde{\xi}_2)^{-1}|-\frac{1}{\ell^2}\varphi\csin{\mathcal B_1}+\frac{1}{\gamma^2}\varphi \csin{\mathcal B_2}|^2\dx\dt+\iint_{\mathcal S^\prime\times(0,T)}e^{-2s\tilde{\alpha}_2}\left|\frac{\partial \theta}{\partial n}\right|^2\d\sigma\dt\right)
\end{align}
holds for any $\lambda\geq\hat\lambda$, where $\hat{\lambda}=\max\{\lambda_0,\lambda_{-1}\}$ and any $s$ large enough.

From \eqref{weights_12}, we readily deduce that $e^{-2s\tilde\alpha_2}\leq e^{-2s\tilde\alpha_1}$ in $Q$ and that $\xi_2^{-1}\xi_1\leq C$, where $C>0$ only depends on $\Omega$ and $\mathcal S^\prime$. Moreover, $e^{-2s\tilde\alpha_1}=e^{-2s\tilde\alpha_2}$ in  $\mathcal O_d\times(0,T)$. In this way, we can absorb the lower order terms so 
\begin{align}\notag
I(\varphi;1,0)&+I(\theta;2, -1)\\ \label{igualda1}
&\leq C\left(\iint_{\mathcal S^{\prime}\times(0,T)}e^{-2s\tilde{\alpha}_1}(s\tilde{\xi}_1)\left|\frac{\partial \varphi}{\partial n}\right|^2\d\sigma\dt+\iint_{\mathcal S^\prime\times(0,T)}e^{-2s\tilde{\alpha}_2}\left|\frac{\partial \theta}{\partial n}\right|^2\d\sigma\dt\right),
\end{align}
holds for every $s$ sufficiently large.

 Notice that unlike the proof of Propositon \ref{prop_carleman}, we cannot obtain any local information from system \eqref{adjunto1} to estimate the last term in \eqref{igualda1}. Instead, we will use the particular selection of the weight functions $\eta_i$ to eliminate the observation of $\theta$ at the boundary.
 
 The first step is to improve \eqref{igualda1} in the sense that the weight functions do not vanish at $t=0$. Let us consider the function
 \begin{equation*}
 \widetilde{l}(t)=
 \begin{cases}
 T^4/16 \quad &\text{for}\quad 0\leq t\leq T/2, \\
 t^2(T-t)^2 \quad &\text{for}\quad T/2\leq t\leq T,
 \end{cases}
 \end{equation*}
 and the weights
 \begin{equation*}
\begin{split}
\tilde{\beta}_i(x,t)=\frac{e^{\lambda(\|\eta_1\|+\|\eta_2\|)}-e^{\lambda \eta_i(x)}}{\widetilde{l}(t)}, \qquad \tilde{\phi}_i(x,t)=&\frac{e^{\lambda\eta_i(x)}}{\widetilde{l}(t)}, \quad i=1,2.
\end{split}\end{equation*}
It can be proved that the following observability inequality holds:
\begin{equation}
\label{aux7}
\begin{split}
\|\varphi(x,0)\|_{H_0^1(\Omega)}^2+&\int_0^{T}\int_{\Omega}e^{-2s\tilde \beta_1}\tilde \phi_1^{3}|\varphi|^2\dx\dt+\int_0^{T}\int_{\Omega}e^{-2s\tilde \beta_2}\tilde \phi_2^{2}|\theta|^2\dx\dt\\
&\leq C\left(\iint_{\mathcal S^\prime\times(0,T)}e^{-2s\tilde \beta_1}\tilde \phi_1\left|\frac{\partial \varphi}{\partial n}\right|^2\d\sigma\dt+\iint_{\mathcal S^{\prime}\times(0,T)}e^{-2s\tilde \beta_2}\left|\frac{\partial \theta}{\partial n}\right|^2\d\sigma\dt\right),
\end{split}
\end{equation}
 for some universal constant $C>0$ depending on $s$ and $T$. Indeed, the proof follows the steps of Section \ref{sec_null_1}, but for the sake of brevity, we omit it. 
 
 Of course, this inequality still has the boundary observation of the variable $\theta$. Nevertheless, the definition of the weight $\tilde\beta_2$ and property \eqref{prop_impor} will help us to estimate it in terms of localized terms of $\varphi$ in $\mathcal B_1$ and $\mathcal B_2$. Hypotheses \eqref{loc_teo2} and property \eqref{prop_impor} imply that
 \begin{equation*}
 \eta_1(x) \geq \max_{x\in \mathcal S^\prime}\eta_2 \quad \forall  x\in\bar{\mathcal B}_i, \quad i=1,2.
 \end{equation*}
 Therefore,
 \begin{equation}\label{rel_eta1_eta2}
 e^{s\tilde\beta_1}\leq e^{s\widehat{\beta_2}} \quad\text{in } \bar{\mathcal B}_i\times(0,T), \quad i=1,2,
 \end{equation}
 where we have defined
 \begin{equation*}
 \widehat{\beta_2}(t):=\frac{e^{\lambda(\|\eta_1\|+\|\eta_2\|)}-e^{{\lambda}\max_{x\in\mathcal S^\prime}\eta_2}}{\widetilde{l}(t)}.
 \end{equation*}
Moreover, we can readily see that $e^{s\widehat{\beta_2}}\leq e^{s\tilde\beta_2}$ for all $(x,t)\in \mathcal S^\prime\times(0,T)$. This, together with the fact that $\mathcal S^\prime\subset\subset\partial \Omega$ and the well-known trace estimate, allow us to obtain 
\begin{align}\notag 
\iint_{S^{\prime}\times(0,T)}e^{-2s\tilde\beta_2}\left|\frac{\partial \theta}{\partial n}\right|^2\d\sigma\dt &\leq \iint_{\Sigma}e^{-2s\widehat{\beta_2}}\left|\frac{\partial \theta}{\partial n}\right|^2\d\sigma\dt \\ \label{est_local_theta}
&\leq C\int_{0}^{T}e^{-2s\widehat{\beta_2}}\|\theta(t)\|_{H^2(\Omega)}^2\dt.
\end{align}

Our task is now to estimate the weighted norm in the above expression. Multiplying the equation satisfied by $\theta$ by $e^{-2s\widehat{\beta_2}}\theta$ in $\Omega$ yields
\begin{equation*}\int_{\Omega}\left(\theta_t-\theta \Delta \theta\right) e^{-2s\widehat{\beta_2}}\theta \dx=\int_{\Omega}\left(-\frac{1}{\ell^2}\varphi\csin{\mathcal B_1}+\frac{1}{\gamma^2}\varphi \csin{\mathcal B_2}\right)e^{-s\widehat{\beta_2}}\theta\dx,\end{equation*}

Integrating by parts and since the weight $\widehat{\beta_2}$ only depends on time, we have
\begin{equation}
\label{aux_temp}
\begin{split}
\frac12\frac{d}{dt}\int_{\Omega}e^{-2s\widehat{\beta_2}}|\theta|^2\dx+&\int_{\Omega}e^{-2s\widehat{\beta_2}}|\nabla \theta|^2\dx\\
&=\int_{\Omega}\left(-\frac{1}{\ell^2}\varphi\csin{\mathcal B_1}+\frac{1}{\gamma^2}\varphi \csin{\mathcal B_2}\right)e^{-s\hat{\beta}_2}\theta\dx-s\int_{\Omega}e^{-2s\widehat{\beta_2}}(\widehat{\beta_2})_t|\theta|^2\dx.
\end{split}
\end{equation}
 Notice that the last term of \eqref{aux_temp} is nonnegative thanks to the nondecreasing nature of the weight $\widehat{\beta_2}$. From this remark and using H\"older and Young inequalities, we get
 \begin{equation}
\label{aux8}
\begin{split}
\frac12\frac{d}{dt}\int_{\Omega}e^{-2s\widehat{\beta}_2}|\theta|^2\dx+&\int_{\Omega}e^{-2s\widehat{\beta_2}}|\triangledown \theta|^2\dx\\
&\leq \frac{C}{\ell^4}\int_{\mathcal B_1}|\varphi|e^{-2s\widehat{\beta_2}}\dx+\frac{C}{\gamma^4}\int_{\mathcal B_2}|\varphi|e^{-2s\widehat{\beta_2}}\dx+\frac12\int_{\Omega}e^{-2s\widehat{\beta_2}}|\theta|^2\dx, 
\end{split}
\end{equation}
and applying Gronwall's lemma to \eqref{aux8} followed by integration by parts yields
 \begin{equation}
\label{aux9}
\iint_Q e^{-2s\widehat{\beta_2}}|\theta|^2\dx\dt\leq C\left(\frac{1}{\ell^4}\int_0^T\!\!\!\!\int_{\mathcal B_1}|\varphi|^2e^{-2s\widehat{\beta_2}}\dx\dt+\frac{1}{\gamma^4}\int_0^T\!\!\!\!\int_{\mathcal B_2}|\varphi|^2e^{-2s\widehat{\beta_2}}\dx\dt\right).
\end{equation}

A similar analysis can be developed to obtain the estimate
\begin{equation}
\label{aux10}
\iint_{Q} e^{-2s\widehat{\beta_2}}|\Delta\theta|^2\dx\dt\leq C\left(\frac{1}{\ell^4}\int_0^T\!\!\!\!\int_{\mathcal B_1}|\varphi|^2e^{-2s\widehat{\beta_2}}\dx\dt+\frac{1}{\gamma^4}\int_0^T\!\!\!\!\int_{\mathcal B_2}|\varphi|^2e^{-2s\widehat{\beta_2}}\dx\dt\right).
\end{equation}
Indeed, it is enough to multiply the second equation of \eqref{adjunto1} by $e^{-2s\widehat{\beta_2}}\Delta\theta$ in $\Omega$, integrate by parts and argue as above. Therefore, we deduce
\begin{align}\notag 
\int_{0}^Te^{-2s\widehat{\beta_2}}\|\theta(t)\|_{H^2(\Omega)}^2\dt&=\iint_{Q}e^{-2s\widehat{\beta_2}}\left(|\theta|^2+
|\Delta \theta|^2\right)\dx\dt \\ \label{est_theta_h2}
&\leq C\left(\frac{1}{\ell^4}\int_0^T\!\!\!\!\int_{\mathcal B_1}|\varphi|^2e^{-2s\widehat{\beta_2}}\dx\dt+\frac{1}{\gamma^4}\int_0^T\!\!\!\!\int_{\mathcal B_2}|\varphi|^2e^{-2s\widehat{\beta_2}}\dx\dt\right).
\end{align}

Putting together estimates \eqref{est_local_theta}, \eqref{est_theta_h2} and taking into account relation \eqref{rel_eta1_eta2} allow us to conclude that
\begin{equation}\label{est_local_fin}
\iint_{S^{\prime}\times(0,T)}e^{-2s\tilde\beta_2}\left|\frac{\partial \theta}{\partial n}\right|^2\dx\dt  \leq C\left(\frac{1}{\ell^4}\int_0^T\!\!\!\!\int_{\mathcal B_1}|\varphi|^2e^{-2s\tilde{\beta}_1}\dx\dt+\frac{1}{\gamma^4}\int_0^T\!\!\!\!\int_{\mathcal B_2}|\varphi|^2e^{-2s\tilde{\beta}_1}\dx\dt\right).
\end{equation}
Thus, thanks to hypothesis \eqref{loc_teo2} and the special selection of the weight functions \eqref{props_brazil}--\eqref{prop_impor}, we have estimated the local boundary term of $\theta$ in function of some localized terms depending on $\varphi$. 

To conclude, it is enough to combine estimates \eqref{aux7} and \eqref{est_local_fin} and take the parameters $\ell,\gamma$ large enough to absorb the remaining terms. Then, \eqref{obs_ineq_dif} follows from the resulting expression by setting $\varrho_2(t):=e^{s\tilde\beta_2^\star}$, where we have denoted $\tilde{\beta}_2^\star(t)=\max_{x\in\overline{\Omega}}\tilde\beta_2(x,t)$, and recalling that $\mathcal S^\prime\subset\subset \mathcal O$.  This ends the proof. 
\end{proof}


\section{All controls on the boundary}\label{sec_bound}

In this section, we shall discuss the hierarchic control problem for the system
\begin{equation}\label{sys_sec3}
\begin{cases}
w_t-\Delta w=\psi, & \quad \text{in Q}, \\
w=h\chi_{\cbd_1}+ v{\csbd}_2&\quad \text{on } \Sigma, \\
w(x,0)=w^0(x), &\quad \text{in } \Omega.
\end{cases}
\end{equation}
Notice that this time both controls are placed on the boundary and is a natural combination of the two previous problems. 

It is not difficult to prove that for any fixed $h\in L^2(\Sigma)$ the exists a unique saddle point $(\bar v,\bar \psi)\in L^2(\Sigma)\times L^2(Q)$ for the cost functional \eqref{func_rob}. Indeed, the procedure is practically the same as in Section \ref{ex_uniq_saddle} since the leader control $h$ is fixed and participates in a indirect way. 

As in Section \ref{sec_bound_leader}, once the saddle point has been characterized, the control $h$ of minimal norm can be obtained by solving a suitable minimization problem. This process would require to prove an observability of the form
\begin{equation}\label{obs_ineq_sec3}
\|\varphi(x,0)\|^2_{H_0^1(\Omega)}+\iint_{Q}\vartheta_3^{-2}(t)|\theta|^2\dx\dt\leq C\iint_{ \cbd_1\times(0,T)}\left|\frac{\partial\varphi}{\partial n}\right|^2\d\sigma\dt,
\end{equation}
 for the solutions to the adjoint system 
\begin{equation}\label{adj_sys_3}
\begin{cases}
-\varphi_t-\Delta \varphi= \theta\chi_{\mathcal O_d} \quad&\textnormal{in }Q, \\
\theta_t-\Delta \theta=\frac{1}{\gamma^2}\varphi \quad& \textnormal{in }Q, \\
\varphi=0, \quad \theta=\frac{1}{\ell^2}\frac{\partial \varphi}{\partial n}{\csbd}_2 \quad &\textnormal{on $\Sigma$,} \\
\varphi(x,T)=\varphi^T(x), \quad \theta(x,0)=0 \quad &\textnormal{in $\Omega$},
\end{cases}
\end{equation}
where $\vartheta_3(t)$ is an appropriate weight. Note that this system is the same as \eqref{adj_sys_foll}, but the observability we need now has a boundary observation instead of a distributed one. 

We could opt to apply the Carleman inequality, presented in Lemma \ref{lemma_car_boundary}, to the first equation of \eqref{adj_sys_3} while the Carleman estimate \eqref{car_boundary} to the second equation in \eqref{adj_sys_3}. Nevertheless, the definition of their respective weights are based on the selection of the functions $\eta_i$ (see eq. \eqref{constr_bound}) and $\eta^0$ (see eq. \eqref{constr_1}) and their different nature complicates the absorption of the lower order terms at the moment of adding both estimates. Moreover, as a consequence of applying \eqref{car_boundary} to the equation verified by $\theta$, we would have local term depending on $\theta$ for which it is not clear the procedure to eliminate it. 

As mentioned in Section \ref{sec_intro}, we shall present a hierarchic control result for the case when $\psi\equiv 0$ and the optimal control objective is modified as follows. 

Let us choose any function $\rho_\star\in C^\infty([0,T])$, such that $\rho_\star(t)\geq e^{\frac{s\bar\alpha}{2}}$ with $\alpha$ defined as
\begin{equation*}
\bar\alpha(x,t)=\frac{e^{2\lambda\|\bar\eta\|_{\infty}}-e^{\lambda\bar\eta(x)}}{t^2(T-t)^2}
\end{equation*}
where $\bar\eta$ is a function verifying properties \eqref{constr_bound}. 
%
%
Consider the optimization problem
\begin{equation*}
\min_{v\in L^2(\Sigma)} \mathcal I(v;h)
\end{equation*}
for the cost functional
\begin{equation*}
\mathcal I(v;h)=\frac{1}{2}\iint_{\mathcal O_d\times(0,T)}|w-w_d|^2\dx\dt+\frac{\ell^2}{2}\iint_{\Sigma}\rho_\star^2|v|^2\d\sigma\dt.
\end{equation*}
This is classical optimal control problem (cf. \cite{Lions_optim}) and the existence and uniqueness of its minimizer is  guaranteed if \eqref{func_teo3} is lower semicontinuous, strictly convex and coercive. The first two conditions can be readily verified while the coercivity follows from the fact that $e^{\frac{s\bar\alpha}{2}}\geq C>0$ for all $(x,t)\in Q$. 

The characterization of the minimum can be carried out as in the previous sections and leads to the optimality system
\begin{equation*}
\begin{cases}
w_t-\Delta w=0, &\quad  \text{in Q}, \\
-r_t-\Delta r= (w-w_d)\chi_{\mathcal O_d}  &\quad  \text{in Q},\\ 
w=h\csin{\Gamma_1}+ \frac{1}{\ell^2}\frac{\partial r}{\partial n}\rho_\star^{-2}{\csbd}_2, \quad r=0 &\quad \text{on } \Sigma, \\
w(x,0)=w^0(x),\quad r(x,T)=0 &\quad \text{in } \Omega.
\end{cases}
\end{equation*}

The next step is then to prove \eqref{obs_ineq_sec3} for the solutions to the adjoint system
\begin{equation}\label{adj_sys_3}
\begin{cases}
-\varphi_t-\Delta \varphi= \theta\chi_{\mathcal O_d} \quad&\textnormal{in }Q, \\
\theta_t-\Delta \theta=0 \quad& \textnormal{in }Q, \\
\varphi=0, \quad \theta=\frac{1}{\ell^2}\frac{\partial \varphi}{\partial n}\rho_{\star}^{-2}{\csbd}_2 \quad &\textnormal{on $\Sigma$,} \\
\varphi(x,T)=\varphi^T(x), \quad \theta(x,0)=0 \quad &\textnormal{in $\Omega$}.
\end{cases}
\end{equation}
The introduction of the weight $\rho_\star$ will help us to obtain a Carleman estimate for the solutions of \eqref{adj_sys_3} without the necessity of applying a Carleman inequality to the equation verified by $\theta$ and thus avoiding the problems mentioned before. The result is the following:
\begin{proposition}\label{prop_4_final}
Assume that $\ell$ is large enough. There exist a constant $\lambda_0$ such that for any $\lambda\geq \lambda_0$ and a constant $C>0$ such that the solution $(\varphi,\theta)$ to \eqref{adj_sys_3} satisfies 
\begin{equation}\label{car_final_sec3}
\iint_{Q}e^{-2s\bar\alpha}(s\bar\xi)^3|\varphi|^2\dx\dt+\iint_{Q}e^{-2s\bar\alpha^\star}|\theta|^2\dx\dt\leq C\iint_{\mathcal \mathcal O_1\times(0,T)}e^{-2s\bar\alpha}s\bar\xi\left|\frac{\partial \varphi}{\partial n}\right|^2\dx\dt
\end{equation}
for all $s$ large enough and every $\varphi^T\in H_0^1(\Omega)$. Here, we have used the notation
\begin{equation*}
\bar\xi(x,t)=\frac{e^{\lambda\bar\eta(x)}}{t^2(T-t)^2} \quad\text{and}\quad \bar\alpha^\star(t)=\frac{e^{2\lambda\|\eta\|_\infty}-e^{\lambda\min_{x\in \overline\Omega}\eta}}{t^2(T-t)^2}.
\end{equation*}
\end{proposition}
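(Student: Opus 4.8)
The plan is to apply the classical boundary Carleman estimate from Lemma \ref{lemma_car_boundary} \emph{only} to the first equation of \eqref{adj_sys_3}, and then to handle the variable $\theta$ by a direct weighted energy argument rather than by a second Carleman inequality. This is exactly where the weight $\rho_\star\geq e^{s\bar\alpha/2}$ earns its keep: the boundary datum for $\theta$ carries the extra factor $\rho_\star^{-2}$, so any term in which $\theta$ appears through its trace will already come premultiplied by a sufficiently negative exponential weight, which lets us estimate it by $\|\partial\varphi/\partial n\|$ on $\cbd_1$ without needing a Carleman inequality for the $\theta$-equation. First I would apply \eqref{Est1} (with $u=\varphi$, $f=\theta\chi_{\mathcal O_d}$, the weights $\bar\alpha,\bar\xi$ built from $\bar\eta$, and a suitable power $m$, say $m=0$) to obtain
\begin{equation*}
\iint_Q e^{-2s\bar\alpha}\left((s\bar\xi)|\nabla\varphi|^2+(s\bar\xi)^3|\varphi|^2\right)\dx\dt\leq C\left(\iint_{\mathcal O_d\times(0,T)}e^{-2s\bar\alpha}|\theta|^2\dx\dt+\iint_{\mathcal O_1\times(0,T)}e^{-2s\bar\alpha}(s\bar\xi)\left|\frac{\partial\varphi}{\partial n}\right|^2\d\sigma\dt\right),
\end{equation*}
so the whole task reduces to absorbing the distributed term $\iint_{\mathcal O_d\times(0,T)}e^{-2s\bar\alpha}|\theta|^2$ and, along the way, producing the $\iint_Q e^{-2s\bar\alpha^\star}|\theta|^2$ term appearing on the left of \eqref{car_final_sec3}.

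The second step is the weighted energy estimate for $\theta$. Since $\theta$ solves $\theta_t-\Delta\theta=0$ in $Q$ with $\theta(\cdot,0)=0$ and $\theta=\ell^{-2}\rho_\star^{-2}(\partial\varphi/\partial n)\csbd_2$ on $\Sigma$, I would multiply the equation by $e^{-2s\bar\alpha^\star}\theta$ and integrate over $\Omega$. Because $\bar\alpha^\star$ depends only on $t$ and is nondecreasing away from $t=T/2$ (and one works separately on $[0,T/2]$ where it is essentially constant, as in Section \ref{sec_null_1}), the term $-s\iint e^{-2s\bar\alpha^\star}(\bar\alpha^\star)_t|\theta|^2$ has a favorable sign on the relevant part of the interval, and the boundary contribution from the integration by parts is controlled by $\|e^{-s\bar\alpha^\star}\rho_\star^{-2}\partial\varphi/\partial n\|$ on $\cbd_2$. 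Using $\rho_\star\geq e^{s\bar\alpha/2}$ and $\bar\alpha\leq\bar\alpha^\star$ on $\Sigma$ (where $\bar\alpha$ and $\bar\alpha^\star$ coincide), the factor $e^{-2s\bar\alpha^\star}\rho_\star^{-4}$ is dominated by $e^{-2s\bar\alpha}e^{-2s\bar\alpha}=e^{-4s\bar\alpha}\leq e^{-2s\bar\alpha}$ times a bounded quantity; together with a trace estimate $\|\partial\varphi/\partial n\|_{L^2(\Sigma)}^2\le C\|\varphi\|_{H^2(\Omega)}^2$ and the $H^{2,1}$-regularity of $\varphi$ (handled as in the proof of Proposition \ref{prof_regularity} and \eqref{est_regul}, introducing $\widehat\varphi=e^{-s\bar\alpha^\star}\varphi$), this yields an estimate of the shape
\begin{equation*}
\iint_Q e^{-2s\bar\alpha^\star}|\theta|^2\dx\dt\leq \frac{C}{\ell^4}\iint_Q e^{-2s\bar\alpha}(s\bar\xi)^{2}|\varphi|^2\dx\dt+\delta\,\iint_Q e^{-2s\bar\alpha}(s\bar\xi)^3|\varphi|^2\dx\dt+C\iint_{\mathcal O_1\times(0,T)}e^{-2s\bar\alpha}s\bar\xi\left|\frac{\partial\varphi}{\partial n}\right|^2\dx\dt,
\end{equation*}
for any small $\delta$ and $s$ large (the power $2$ versus $3$ reflecting the loss in the regularity estimate, which is harmless since $\bar\xi$ is large).

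The third step is simply to combine: since $e^{-2s\bar\alpha^\star}\le e^{-2s\bar\alpha}$ pointwise, the bound on $\iint_{\mathcal O_d\times(0,T)}e^{-2s\bar\alpha}|\theta|^2$ obtained above can be plugged into the output of Lemma \ref{lemma_car_boundary}; choosing $\delta$ small and then $\ell$ large enough absorbs the two $\varphi$-terms on the right-hand side into the left-hand side of the Carleman estimate, leaving only the boundary observation $\iint_{\mathcal O_1\times(0,T)}e^{-2s\bar\alpha}s\bar\xi|\partial\varphi/\partial n|^2$. Adding back the $\iint_Q e^{-2s\bar\alpha^\star}|\theta|^2$ term (now controlled) to the left gives exactly \eqref{car_final_sec3}. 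The main obstacle, and the only genuinely delicate point, is the second step: one must verify that the exponential gain from $\rho_\star^{-2}$ really does beat the loss incurred in bounding the $\theta$-trace by $\|\varphi\|_{H^2}$ through the non-autonomous weight, and that the sign of the time-derivative term in the energy identity is exploited on the correct subinterval — this is precisely analogous to the handling of $\widehat\beta_2$ in the proof of Proposition \ref{prop_obs_ineq_dif}, and the same splitting of $[0,T]$ at $T/2$ together with the monotonicity of the time weight makes it go through.
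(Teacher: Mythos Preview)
Your overall strategy is exactly that of the paper: apply the boundary Carleman estimate \eqref{Est1} only to the equation for $\varphi$, and then control $\theta$ by a weighted energy argument exploiting the extra factor $\rho_\star^{-2}$ in the boundary datum together with $\ell\gg 1$. Two technical points, however, do not go through as written.

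First, in your Step 2 you propose to multiply the $\theta$-equation by $e^{-2s\bar\alpha^\star}\theta$ and integrate by parts. Because $\theta$ has \emph{nonhomogeneous} Dirichlet data, the spatial integration by parts produces the boundary term $-e^{-2s\bar\alpha^\star}\int_{\partial\Omega}\frac{\partial\theta}{\partial n}\,\theta\,d\sigma$; on $\Gamma_2$ this equals $-\ell^{-2}e^{-2s\bar\alpha^\star}\int_{\Gamma_2}\rho_\star^{-2}\frac{\partial\theta}{\partial n}\frac{\partial\varphi}{\partial n}\,\rho_{\Gamma_2}\,d\sigma$, and the normal derivative $\partial\theta/\partial n$ is not available in the estimate. (Your sentence ``the boundary contribution \ldots is controlled by $\|e^{-s\bar\alpha^\star}\rho_\star^{-2}\partial\varphi/\partial n\|$'' silently drops this factor.) The paper avoids this by setting $\widehat\theta=e^{-s\bar\alpha^\star}\theta$ and invoking the Lions--Magenes $W(Q)$ regularity for nonhomogeneous Dirichlet problems (as in the proof of Proposition~\ref{prof_regularity}), which gives directly $\|\widehat\theta\|_{W(Q)}\le C\bigl(\|(e^{-s\bar\alpha^\star})_t\theta\|_{L^2(Q)}+\ell^{-2}\|e^{-s\bar\alpha^\star}\rho_\star^{-2}\partial_n\varphi\|_{H^{1/2,1/4}(\Sigma)}\bigr)$ and then passes to $\|e^{-s\bar\alpha^\star}\rho_\star^{-2}\varphi\|_{H^{2,1}(Q)}$ via the trace map $H^{2,1}(Q)\to H^{1/2,1/4}(\Sigma)$.

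Second, in Step 3 you invoke $e^{-2s\bar\alpha^\star}\le e^{-2s\bar\alpha}$ to transfer your bound on $\iint_Q e^{-2s\bar\alpha^\star}|\theta|^2$ to the term $\iint_{\mathcal O_d\times(0,T)}e^{-2s\bar\alpha}|\theta|^2$ appearing on the right of the Carleman inequality; but this inequality goes the \emph{wrong} way (it gives $\iint e^{-2s\bar\alpha^\star}|\theta|^2\le\iint e^{-2s\bar\alpha}|\theta|^2$, not the reverse). The paper's remedy is to first \emph{fix} $s$ large, observe that then $e^{-2s\bar\alpha}\le C$ and $e^{-2s\bar\alpha^\star}(\bar\xi^\star)^3\le C$ pointwise, and bound these $\theta$-terms by the \emph{unweighted} norm $\|\theta\|_{L^2(Q)}^2$; the latter is then estimated by $C\ell^{-4}\|\rho_\star^{-2}\varphi\|_{H^{2,1}(Q)}^2$ via a standard energy inequality (using $\theta(\cdot,0)=0$), and absorbed by taking $\ell$ large after $s$ has been frozen. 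A minor related slip: for the boundary Carleman weight $\bar\eta$ of \eqref{constr_bound}, one does \emph{not} have $\bar\alpha=\bar\alpha^\star$ on $\Sigma$ (that identity holds for the Fursikov--Imanuvilov weight $\eta^0$ of \eqref{constr_1}, which vanishes on $\partial\Omega$, but not here); only the inequality $\bar\alpha\le\bar\alpha^\star$ is available.
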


\begin{proof}
Let us set $\mathcal S^\prime\subset\subset \mathcal O_1$. Then, we apply inequality \eqref{Est1} (with the corresponding weights $\bar\alpha$, $\bar\xi$ and $m=0$) to the first equation of system \eqref{adj_sys_3}. By fixing $\lambda_0>0$, we obtain
\begin{equation}\label{car_init_sec3}
\iint_Qe^{-2s\bar\alpha}(s\bar\xi)^3|\varphi|^2\dx\dt\leq C\left(\iint_{Q}e^{-2s\bar\alpha}|\theta\chi_{\mathcal O_d}|^2\dx\dt+\iint_{\mathcal S^\prime\times(0,T)}e^{-2s\bar\alpha}s\bar\xi\left|\frac{\partial \varphi}{\partial n}\right|\dx\dt\right)
\end{equation}
valid for $s$ and $\lambda$ large enough. 

Now, we define $\widehat{\theta}=e^{-s\bar\alpha^\star}\theta$. Then, $\widehat{\theta}$ is solution to the system 
\begin{equation*}
\begin{cases}
-\widehat\theta_t-\Delta \widehat\theta=(e^{-s\alpha^\star})_t\theta &\quad \text{in } Q, \\
\widehat\theta=e^{-s\bar\alpha^\star}\frac{\partial\varphi}{\partial n}\rho_\star^{-2}{\csbd}_{2} &\quad \text{on } \Sigma, \\
\widehat{\theta}(\cdot,0)=0 &\quad\text{in } \Omega.
\end{cases}
\end{equation*}
Notice that $e^{-s\bar\alpha^\star}\in C^{\infty}([0,T])$, thus the following energy estimate holds
\begin{equation}\label{est_theta_gorro}
\|\widehat\theta\|_{W(Q)}\leq C\left(\|e^{-s\bar\alpha^\star}s(\bar\xi^\star)^{3/2}\theta\|_{L^2(Q)}+\left\|\frac{1}{\ell^2}e^{-s\bar\alpha^\star}\frac{\partial\varphi}{\partial n}\rho_\star^{-2}\right\|_{H^{\frac12,\frac14}(\Sigma)}\right).
\end{equation}
Here, we have denoted $\bar{\xi}^\star(t)=\frac{e^{\lambda\min_{x\in \overline\Omega}\bar\eta}}{t^2(T-t)^2}$ and used the fact that $|\bar\alpha^\star_t|\leq C({\bar\xi}^\star)^{3/2}$ . Arguing as in the proof of Proposition \ref{prof_regularity}, we deduce from \eqref{est_theta_gorro} that
\begin{equation}\label{est_bar_theta}
\iint_{Q}e^{-2s\bar\alpha^\star}|\theta|^2\dx\dt \leq C\iint_{Q}e^{-2s\bar\alpha^\star}s(\bar\xi^\star)^3|\theta|^2\dx\dt+\frac{C}{\ell^4}\|e^{-s\bar\alpha^\star}\rho_\star^{-2}\varphi\|^2_{H^{2,1}(Q)}.
\end{equation}

Adding estimates \eqref{car_init_sec3} and \eqref{est_bar_theta}, and taking into account that $\mathcal O_d\subset\Omega$, we get
\begin{align} \notag
\iint_{Q}&e^{-2s\bar\alpha}(s\bar\xi)^3|\varphi|^2\dx\dt+\iint_{Q}e^{-2s\bar\alpha^\star}|\theta|^2\dx\dt \\ \label{car_inter_sec3}
&\leq C\iint_{\Gamma_1\times(0,T)}e^{-2s\bar\alpha}s\bar\xi\left|\frac{\partial \varphi}{\partial n}\right|^2\d\sigma\dt+C\iint_{Q}e^{-2s\bar\alpha^\star}(s\bar\xi^\star)^3|\theta|^2\dx\dt+\frac{C}{\ell^4}\|e^{-s\bar\alpha^\star}\rho_\star^{-2}\varphi\|^2_{H^{2,1}(Q)}
\end{align}
for all $s$ large enough. 

At this point, we set $s$ to a fixed value sufficiently large and reasoning as in Proposition \ref{prop_obs_ineq_dif}, we will absorb the last two terms in the above expression by taking the parameter $\ell$ large enough. Since $\theta(\cdot,0)=0$ and
\begin{equation*}
e^{-2s\bar\alpha^\star}({\bar\xi^\star})^3\leq C, \quad \forall (x,t)\in Q,
\end{equation*}
we can use a classical (non-weighted) energy estimates for the solutions to $\theta$. More precisely, we can bound the second term in the right-hand side of \eqref{car_inter_sec3} as
\begin{align}\notag
\iint_{Q}e^{-2s\bar\alpha^\star}({\bar\xi^\star})^3|\theta|^2\dx\dt&\leq C\|\theta\|^2_{L^2(Q)}  \\ \label{est_nonwei}
&\leq \frac{C}{\ell^4}\|\rho_\star^{-2}\varphi\|^2_{H^{2,1}(Q)}. 
\end{align}

To conclude, it is enough to obtain an estimate for $\widehat{\varphi}:=e^{-s\bar\alpha^\star}\varphi$ in $H^{2,1}(Q)$. Indeed, since $\rho_\star\geq e^{\frac{s\alpha}{2}}$, we can use such estimate to bound the last term in \eqref{car_inter_sec3} and \eqref{est_nonwei}. For this, notice that $\widehat{\varphi}$ verifies
\begin{equation*}
\begin{cases}
-\widehat\varphi_t-\Delta \widehat\varphi=-(e^{-s\alpha^\star})_t\varphi+e^{-s\alpha^\star}\theta\chi_{\mathcal O_d} &\quad \text{in } Q, \\
\widehat\varphi=0 &\quad \text{on } \Sigma, \\
\widehat{\varphi}(\cdot,T)=0 &\quad\text{in } \Omega,
\end{cases}
\end{equation*}
then, we immediately deduce that
\begin{equation}\label{est_final}
\|\widehat{\varphi}\|_{H^{2,1}(Q)}\leq C\left(\|e^{-s\bar\alpha^\star}(\xi^\star)^{3/2}\varphi\|_{L^2(Q)}+\|e^{-s\bar\alpha^\star}\theta\|_{L^2(Q)}\right).
\end{equation}
Finally, using \eqref{est_final} to estimate in \eqref{car_inter_sec3} and \eqref{est_nonwei} the desired inequality \eqref{car_final_sec3} follows by taking $\ell>>1$. This concludes the proof.
\end{proof}

\begin{remark} The observability inequality \eqref{obs_ineq_sec3} is a direct consequence of \eqref{car_final_sec3} and can be found arguing as in the previous sections. For brevity, we will omit the proof. 
\end{remark}

\section{Concluding remarks}\label{sec_conclusion}

In this paper, we have considered the robust hierarchic control problem for the simple case of a heat equation. However, there are several open questions and related remarks that are worth mentioning.

\begin{enumerate}
\item \textit{Nonlinear problems.} So far, we have focused on studying linear control problems. A further step is to analyze the robust hierarchic control problem for systems of the form
\begin{equation}\label{nonlin_trans}
\begin{cases}
y_t-\Delta y+f(y)=h\chi_{\omega}+\psi &\quad \text{in } Q,\\
y=v\csbd &\quad\text{in }\Sigma, \\
y(x,0)=y^0(x) &\quad\text{in } \Omega,
\end{cases}
\end{equation}
where $f\in C^2(\mathbb{R})$ is a globally Lipschitz function. The well-posedness of \eqref{nonlin_trans} in the  functional space \eqref{space_trans} can be understood as in \cite[Section 8.2]{pighin} and therefore the robust control problem (see Definition \ref{defi_rob}) is meaningful. 

For proving the existence and uniqueness of the saddle point, we need to obtain the first and second order Frechet derivatives of the input-to-state operator $G:(v,\psi)\to y$ where $y$ is the solution to \eqref{nonlin_trans}, {as well as their regularity.} This can be done by following \cite{vhs_deT_rob}. Nevertheless, for proving an analog to Proposition \ref{verif_cond} for the solutions to \eqref{nonlin_trans} we need some additional embedding results (cf. \cite[Proof of Prop. 2]{vhs_deT_rob}) and, in this case, it is not so clear how to obtain them. Thus, it remains as an open problem.
\item \textit{On the hypothesis \eqref{loc_teo2}}. We have proved Theorem \ref{teo_main2} by establishing an observability inequality where hypothesis \eqref{loc_teo2} was heavily used. Indeed, the construction of two different Carleman weights fulfilling \eqref{props_brazil}--\eqref{prop_impor} and then the elimination of the second boundary observation in \eqref{aux7} rely on such hypothesis. Notice also that, as a consequence, we were obliged to consider a perturbation $\psi$ localized in the domain $\mathcal B_2\times(0,T)$ for the robust control part but, at that level, \eqref{loc_teo2} is not necessary to determine the existence and uniqueness of the saddle point. It is therefore interesting to prove Theorem \ref{teo_main2} without using hypothesis \eqref{loc_teo2} or by considering an alternative procedure that allows to have a perturbation $\psi$ in the whole domain $Q$. 

\item \textit{Remarks on the Stackelberg robust control with all controls localized in the boundary}. We devoted Section \ref{sec_bound} to prove a hierarchic control result for the heat equation in the case where the leader and the follower are placed on the boundary. This was possible due to the weighted functional \eqref{func_teo3} whose minimization enforces the control $v$ to vanish exponentially as $t$ goes to $0$ and $T$. 

We could have taken into account the effect of a perturbation affecting the system (see eq. \eqref{sys_sec3}) by considering a cost functional of the form
\begin{equation*}
\mathcal K(v,\psi;h)=\frac{1}{2}\iint_{\mathcal O_d\times(0,T)}|w-w_d|^2\dx\dt+\frac{\ell^2}{2}\iint_{\Sigma}\rho_\star^2|v|^2{\csbd}_2\d\sigma\dt-\frac{\gamma^4}{2}\iint_{Q}\rho_\star^2|\psi|\dx\dt.
\end{equation*}
For this functional, we can prove the existence of a saddle point if $\gamma$ is large enough. However, the introduction of the weight function indicates that only perturbations vanishing at $t=0$ and $t=T$ are allowed. From the practical point of view, it makes sense to consider a control $v$ with these properties, since is at the choice of the designer, however, the perturbations are not at hand and restricting its behavior to such class of functions is not accurate.

\item \textit{A Stackelberg-Nash controllability with all controls on the boundary}. Theorem \ref{teo3} can be extended to the case when more followers participate in the definition of the optimal control problem. More precisely, let us consider the system
\begin{equation}\label{sys_sec3}
\begin{cases}
w_t-\Delta w=0, & \quad \text{in Q}, \\
w=h\chi_{\cbd}+ v^1\rho_{\Gamma_1}+v^2\rho_{\Gamma_2}&\quad \text{on } \Sigma, \\
w(x,0)=w^0(x), &\quad \text{in } \Omega,
\end{cases}
\end{equation}
where $\Gamma,\Gamma_i\subset \partial \Omega$ are open sets with $\Gamma\cap\Gamma_i=\emptyset$, $i=1,2$. Also, consider the functionals
\begin{equation}\label{func_nash}
\mathcal I_i(v^1,v^2;h)=\frac{1}{2}\iint_{\mathcal O_{i,d}\times(0,T)}|w-w_{i,d}|^2\dx\dt+\frac{\ell_i^2}{2}\iint_{\Sigma}\rho_\star^2|v^{i}|^2\d\sigma\dt, \quad i=1,2.
\end{equation}
where $w_{i,d}$ in $L^2(Q)$ are given functions and $\mathcal{O}_{i,d}\subset \Omega$ are arbitrary observation sets.

Here, the goal is to design $v^1$ and $v^2$ so that a \textit{Nash equilibrium} for the functionals \eqref{func_nash} is attained, this is,  for any fixed $h$, we look for a pair $(\bar v^1,\bar v^2)$ verifying  
\begin{equation}\label{nash}
I_1(\bar v^1,\bar v^2;h)=\min_{v^1\in L^2(\Sigma)}I_1(v^1,\bar v^2), \quad I_2(\bar v^1,\bar v^2;h)=\min_{v^2\in L^2(\Sigma)}I_1(\bar v^1, v^2).
\end{equation}
Adapting the results of \cite{araruna,vhs_corri}, it can be seen that, choosing $\ell_i$ large enough, there exists a unique pair $(\bar v^1,\bar v^2)$ satisfying \eqref{nash}. The characterization of the Nash equilibrium then leads to the optimality system
\begin{equation*}
\begin{cases}
w_t-\Delta w=0, &\quad  \text{in Q}, \\
-r^{i}_t-\Delta r^{i}= (w-w_{i,d})\chi_{\mathcal O_d}  &\quad  \text{in Q},\\ 
w=h\chi_{\cbd}+ \frac{1}{\ell_1^2}\frac{\partial r^{1}}{\partial n}\rho_\star^{-2}\rho_{\Gamma_1}+\frac{1}{\ell_2^2}\frac{\partial r^{2}}{\partial n}\rho_\star^{-2}\rho_{ \Gamma_2}, \quad r^{i}=0 &\quad \text{on } \Sigma, \\
w(x,0)=w^0(x),\quad r^{i}(x,T)=0 &\quad \text{in } \Omega, \quad i=1,2,
\end{cases}
\end{equation*}
and the null controllability for this system can be addressed by obtaining a suitable observability inequality for the solutions to the adjoint system
\begin{equation}\label{adj_conclusion}
\begin{cases}
-\varphi_t-\Delta \varphi=\theta^1\chi_{\mathcal O_{2,d}}+\theta^2\chi_{\mathcal O_{2,d}}, &\quad  \text{in Q}, \\
\theta^{i}_t-\Delta \theta^{i}= 0 &\quad  \text{in Q},\\ 
\varphi=0, \quad \theta^{i}=\frac{1}{\ell_i^2}\frac{\partial \varphi}{\partial n}\rho_\star^{-2}\chi_{\Gamma_i}, &\quad \text{on } \Sigma, \\
\varphi(x,T)=\varphi^T(x),\quad \theta^{i}(x,0)=0 &\quad \text{in } \Omega, \quad i=1,2.
\end{cases}
\end{equation}
Thanks to the weight $\rho_\star$, this inequality can be obtained as in the proof of Proposition \ref{prop_4_final}: it is enough to apply the Carleman inequality \eqref{Est1} to the first equation of \eqref{adj_conclusion} and then only use energy estimates to absorb the remaining terms depending on $\theta^{i}$ for $i=1,2$. For this same reason, unlike \cite{araruna,vhs_corri}, there is not need to impose additional hypotheses on the sets $\mathcal O_{i,d}$.

\item \textit{Changing the objective of the leader}. As far as the authors' knowledge, all of the papers devoted to hierarchic control consider a controllability (either null or approximate) constraint as the leader's objective. An interesting problem that arises is to study the case where the leader is now in charge of an \emph{insensitizing} problem (see, e.g., \cite{deteresa2000}). To fix ideas, consider the system 
\begin{equation*}
\begin{cases}
y_t-\Delta y=\xi + h\chi_{\omega}+v\chi_{\mathcal O} &\quad\text{in }Q, \\
y=0 &\quad\text{in } \Sigma, \\
y(x,0)=y_0(x)+\tau\bar y  &\quad\text{in } \Omega,
\end{cases}
\end{equation*}
where $\xi\in L^2(Q)$ is a given source term. The data of the system is incomplete in the following sense: $\bar y\in L^2(\Omega)$ is unknown with $\|y\|_{L^2(\Omega)}=1$ and $\tau\in\mathbb R$ is unknown and small enough. 

As usual, the follower $v$ will be in charge of a classical optimal control problem (i.e., minimize a functional like \eqref{func_rob} with $\gamma\equiv 0$) and for $h$ and $\xi$, the expected optimality system takes the form
\begin{equation}\label{sys_insi}
\begin{cases}
y_t-\Delta y=\xi + h\chi_{\omega}-\frac{1}{\ell^2}p\chi_{\mathcal O} &\quad\text{in }Q, \\ 
-p_t-\Delta p=(y-y_d)\chi_{\mathcal O_d} &\quad\text{in }Q \\
y=0, \quad p=0 &\quad\text{in } \Sigma, \\
y(x,0)=y_0(x)+\tau\bar y, \quad p(x,T)=0  &\quad\text{in } \Omega.
\end{cases}
\end{equation}
Now, consider a differentiable functional $\Psi$ defined on the sets of solutions to \eqref{sys_insi}, for instance, for some observation set $\mathcal S\subset \Omega$ we define
\begin{equation*}
\Psi(y):= \frac{1}{2}\iint_{\mathcal S\times(0,T)}|y|^2\dx\dt.
\end{equation*}
We say that a control $h$ insensitizes $\Psi(y)$ for the initial datum $y_0$ and the source term $\xi$ if
\begin{equation}\label{obj_ins}
\left.\frac{\partial\Psi}{\partial\tau}\right|_{\tau=0}=0, \quad \forall \bar y\in L^2(\Omega). 
\end{equation}
As common in other insensitizing control problems, \eqref{obj_ins} is equivalent to a non-standard controllability problem. For our case, this translates into finding $h$ such that $z(x,0)=0$ where $(y,p,z,q)$ is the solution to
\begin{equation}\label{sys_insi}
\begin{cases}
y_t-\Delta y=\xi + h\chi_{\omega}-\frac{1}{\ell^2}p\chi_{\mathcal O} &\quad\text{in }Q, \\ 
-p_t-\Delta p=(y-y_d)\chi_{\mathcal O_d} &\quad\text{in }Q \\
-z_t-\Delta z= q+y\chi_\mathcal{S} &\quad\text{in }Q \\
q_t-\Delta q=-\frac{1}{\ell^2}z\chi_{\mathcal O} &\quad\text{in }Q \\
y= p=z=q=0 &\quad\text{in } \Sigma, \\
y(x,0)=y_0(x), \quad p(x,T)=z(x,T)=q(x,0)=0  &\quad\text{in } \Omega.
\end{cases}
\end{equation}

This means that we have to control one component of a system of four coupled equations. As pointed out in \cite{vhs_honor}, the hierarchic controllability of coupled systems as \eqref{sys_insi} with one control force is not yet fully understood and further investigation is desirable. 

\renewcommand{\abstractname}{Acknowledgements}
\begin{abstract}
\end{abstract}
\vspace{-0.5cm}
The work of the first author was partially supported by project   IN102116 of DGAPA,
UNAM (Mexico). Also, the authors wish to acknowledge  Prof. Luz de Teresa (UNAM, Mexico) and Prof. Franck Boyer (IMT, France) for the fruitful discussions. 
\end{enumerate}  


\end{document}